\newcommand{\script}[1]{{\mathcal{#1}}}
\newcommand{\A}{\script{A}}
\newcommand{\B}{\script{B}}
\newcommand{\I}{\script{I}}
\newcommand{\Zz}{\script{Z}}
\newcommand{\Y}{\script{Y}}
\newcommand{\W}{\script{W}}
\newcommand{\frah}{\mathfrak{H}}
\newcommand{\frack}{\mathfrak{K}}
\newcommand{\Zme}{\mathcal{Z}}
\newcommand{\Hil}{\mathcal{H}}
\newcommand{\K}{\mathcal{K}}
\newcommand{\ip}[2]{\left( \left. #1 \, \right| \, #2 \right)}
\newcommand{\bip}[2]{\bigl( \bigl. #1 \, \bigr| \, #2 \bigr)}
\newcommand{\hip}[2]{\langle \! \langle #1, #2 \rangle \! \rangle}
\newcommand{\ess}{{\rm{ess}}}
\newcommand{\norm}[1]{{\left\| #1 \right\|}}
\newcommand{\Bignorm}[1]{{\Bigl\| #1 \Bigr\|}}
\newcommand{\supp}{\operatorname{supp}}
\newcommand{\Prim}{\operatorname{Prim}}
\newcommand{\id}{{\rm{id}}}
\newcommand{\spa}{\operatorname{span}}
\newcommand{\image}{\operatorname{im}}
\newcommand{\omax}{\otimes_{\rm{max}}}
\newcommand{\ohat}{\mathbin{\hat{\otimes}}}
\newcommand{\osig}{\otimes_\sigma}
\newcommand{\go}{{G^{(0)}}}
\newcommand{\iso}{\operatorname{Iso}}
\newcommand{\modu}{\Delta(\gamma)^{-1/2}}
\DeclareMathOperator{\Ind}{Ind}
\newcommand{\gcra}{\Gamma_c(G, r^*\A)}
\newtheorem{prop}{Proposition}[section]
\newtheorem{thm}[prop]{Theorem}
\newtheorem{cor}[prop]{Corollary}
\newtheorem{lem}[prop]{Lemma}
\theoremstyle{definition}
\newtheorem{defn}[prop]{Definition}
\newtheorem{rem}[prop]{Remark}
\newlist{thmenum}{enumerate}{10}
\setlist[thmenum,1]{label=\textnormal{(\alph*)}}
\setlist[thmenum,2]{label=\textnormal{(\roman*)}}
\newlist{altenum}{enumerate}{10}
\setlist[altenum,1]{label=\textnormal{(\roman*)}}
\setlist[altenum,2]{label=\textnormal{(\alph*)}}
\title{Nuclearity and Exactness for Groupoid Crossed Products}
\author{Scott M. LaLonde}
\address{Department of Mathematics, Dartmouth College, Hanover, NH 03755-3551}
\email{scott.m.lalonde.gr@dartmouth.edu}
\keywords{Groupoid crossed product, $C_0(X)$-algebra, nuclearity, exactness, exact groupoid.}
\subjclass[2010]{46L55, 46L06}
\begin{document}

\begin{abstract}
	Let $(\A, G, \alpha)$ be a groupoid dynamical system. We show that if $G$ is assumed to be measurewise amenable and the section 
	algebra $A = \Gamma_0(\go, \A)$ is nuclear, then the associated groupoid crossed product is also nuclear. This generalizes an earlier 
	result of Green for crossed products by locally compact groups. We also extend a related result of Kirchberg to groupoids. In particular, 
	if $A$ is exact and $G$ is amenable, then we show that $\A \rtimes G$ is exact.
\end{abstract}

\maketitle

\section{Introduction}
It has been known for quite some time that the amenability of a locally compact group is intimately connected with the nuclearity of the 
$C^*$-algebras associated to it. For example, Guichardet observed in \cite{guichardet} that if $G$ is an amenable group, then the group 
$C^*$-algebra $C^*(G)$ is nuclear. Of course this is now subsumed by the well-known fact that the class of nuclear $C^*$-algebras is stable 
under crossed products by amenable groups. This fact seems to have been first proved by Green in \cite{green}. 

Just as questions of amenability and nuclearity are often tightly wound together, the exactness of a group $C^*$-algebra or crossed product 
depends greatly on the properties of the underlying group. For example, if a locally compact group $G$ is exact in the sense of Kirchberg and 
Wassermann \cite{kw99}, then its reduced group $C^*$-algebra is exact. Since amenable groups are exact, $C^*(G)$ is exact when $G$ is 
amenable. This does not hold for arbitrary exact groups---Choi showed \cite{choi} that $C_r^*(\mathbb{F}_2)$ embeds into the 
Cuntz algebra $\mathcal{O}_2$, and is therefore exact. It follows that $\mathbb{F}_2$ is exact by \cite[Thm. 5.2]{kw99}, but the full group 
$C^*$-algebra $C^*(\mathbb{F}_2)$ is not exact \cite[Cor. 3.7]{wassermann}. More generally, this example shows that the crossed product 
of an exact $C^*$-algebra by an exact group need not be exact. However, the corresponding statement holds for the \emph{reduced} crossed 
product. It is this fact that lies at the heart of \cite[Prop. 7.1(v)]{kirchberg}, where Kirchberg shows that the crossed product of an exact 
$C^*$-algebra by an amenable group is exact.

Much attention has been given in recent years to the $C^*$-algebras that arise from locally compact groupoids. Operator algebraists have 
studied groupoid $C^*$-algebras, and more recently, groupoid crossed products. It is natural to ask whether the aforementioned results for 
group $C^*$-algebras and crossed products carry over to the groupoid setting. At least one of them is already known to generalize---it is 
shown in \cite[Cor. 6.2.14]{ananth-renault} that if $G$ is a measurewise amenable groupoid, then $C^*(G)$ is nuclear. In this paper, we 
extend Green's result to groupoid crossed products by showing that crossed products of nuclear $C^*$-algebras by measurewise amenable 
groupoids are nuclear. We also show that the crossed product of an exact $C^*$-algebra by an amenable groupoid is again exact, thus 
extending Kirchberg's result.

The structure of this paper is as follows. In Section 2 we give an overview of groupoid crossed products and their representations. 
In Section 3 we outline some technical results regarding ideals and representations of $C_0(X)$-algebras. We discuss tensor product 
dynamical systems in Section 5, and prove the first of two results on the relationship between tensor products and crossed products. With 
Section 6 comes the proof of the nuclearity theorem, and we take up exactness in Section 7.

Throughout we assume that all groupoids and topological spaces are second countable and all representations are nondegenerate unless 
otherwise specified. If $A$ is a $C^*$-algebra, then $M(A)$ denotes its multiplier algebra. Finally, we will frequently make reference to 
$C_0(X)$-algebras, in the sense of \cite[Appendix C]{TFB2}, and their associated upper semicontinuous $C^*$-bundles. We will always 
denote a $C^*$-bundle with a script letter (with the exception of $\Hil$, which is reserved for Hilbert spaces), and the corresponding Roman 
letter will represent its section algebra.

\section{Groupoid Crossed Products}
We begin with a brief overview of groupoid crossed products; more detailed treatments can be found in \cite{geoff} or \cite{mw08}. Throughout 
$G$ will denote a locally compact Hausdorff groupoid. We write $\go$ for the \emph{unit space} of $G$, and $r, s : G \to \go$ denote the
range and source maps, respectively. For $u \in \go$ we write $G^u := r^{-1}(\{u\})$ and $G_u := s^{-1}(\{u\})$. We also assume that $G$ is 
endowed with a continuous Haar system $\lambda = \{\lambda^u\}_{u \in \go}$. 

As with the classical case, groupoid crossed products are built out of $C^*$-dynamical systems. Since groupoids naturally act on fibered 
objects, our dynamical systems will not involve $C^*$-algebras \emph{per se}, but upper semicontinuous $C^*$-bundles over $\go$. 

\begin{defn}
	Let $\A$ be an upper semicontinuous bundle over $\go$. An \emph{action} of $G$ on $\A$ is a family $\alpha = 
	\{ \alpha_\gamma\}_{\gamma \in G}$, where:
	\begin{thmenum}
		\item $\alpha_\gamma : \A_{s(\gamma)} \to \A_{r(\gamma)}$ is an isomorphism for all $\gamma \in G$,
		\item if $(\gamma, \eta) \in G^{(2)}$, then $\alpha_{\gamma\eta} = \alpha_\gamma \circ \alpha_\eta$, and
		\item the assignment $(\gamma, a) \mapsto \gamma \cdot a = \alpha_\gamma(a)$ is continuous from $G * \A \to \A$.
	\end{thmenum}
\end{defn}

\begin{defn}
	A \emph{groupoid dynamical system} is a triple $(\A, G, \alpha)$, where $\A$ is an upper semicontinuous $C^*$-bundle and $\alpha$
	is an action of $G$ on $\A$. We say that $(\A, G, \alpha)$ is \emph{separable} if $A$ is separable and $G$ is second countable.
\end{defn}

\begin{rem}
	We assume throughout that all dynamical systems are separable, since many standard tools (e.g. Renault's Disintegration Theorem
	\cite[Thm. 7.12]{mw08}) are not available in the nonseparable case.
\end{rem}

Given a dynamical system $(\A, G, \alpha)$, the associated crossed product is built from a certain algebra of sections. In particular, 
we consider the space $\Gamma_c(G, r^*\A)$ of continuous compactly supported sections, which becomes a $*$-algebra with respect to 
the product
\[
	f*g(\gamma) = \int_G f(\eta) \alpha_\eta \bigl( g(\eta^{-1}\gamma) \bigr) \, d\lambda^{r(\gamma)}(\eta)
\]
and involution
\[
	f^*(\gamma) = \alpha_\gamma \bigl( f(\gamma^{-1})^* \bigr)
\]
One can verify that these operations are continuous with respect to the inductive limit topology which makes $\Gamma_c(G, r^*\A)$ into a 
topological $*$-algebra.

We can equip $\Gamma_c(G, r^*\A)$ with a norm in the following manner. A \emph{representation} of $\Gamma_c(G, r^*\A)$ on a Hilbert 
space $\Hil$ is a $*$-homomorphism $\pi : \Gamma_c(G, r^*\A) \to B(\Hil)$ that is continuous in the inductive limit topology. (A net $\{f_i\}$ 
converges to $f$ in the inductive limit topology if $f_i \to f$ uniformly and the sets $\supp(f_i)$ are eventually contained in a fixed compact set 
$K$.) For $f \in \Gamma_c(G, r^*\A)$, define
\[
	\norm{f} = \sup \bigl\{ \norm{\pi(f)} : \pi \text{ is a representation of } \Gamma_c(G, r^*\A) \bigr\},
\]
which we call the \emph{universal norm}. The completion of $\Gamma_c(G, r^*\A)$ with respect to the universal norm is called the \emph{(full) 
crossed product of $\A$ by $G$}, denoted $\A \rtimes_\alpha G$.

There is another way of obtaining the universal norm on $\Gamma_c(G, r^*\A)$, to which we will need to appeal later. As with group dynamical 
systems, there is a notion of \emph{covariant representations} for groupoid crossed products. However, they are quite technical, and we will use 
them only when necessary. If $\go * \frah$ is an analytic Borel Hilbert bundle, we define the \emph{isomorphism groupoid} to be
\[
	\iso(\go * \frah) = \bigl\{ (u, T, v) : T : \Hil(v) \to \Hil(u) \text{ is a unitary} \bigr\}.
\]
This is a groupoid under the operations
\[
	(u, T, v)(v, S, w) = (u, TS, w), \quad (u, T, v)^{-1} = (v, T^{-1}, u),
\]
and it is endowed with a natural Borel structure induced from the Borel sections of $\go * \frah$ \cite[\S F.6]{TFB2}. Thus $\iso(\go*\frah)$ is a 
Borel groupoid. A \emph{unitary representation} of $G$ is just a Borel groupoid homomorphism $U : G \to \iso(\go*\frah)$.

Now suppose $\mu$ is a \emph{quasi-invariant} Radon measure on $\go$, meaning that the induced measures $\nu = \mu \circ \lambda$ 
and $\nu^{-1} = \mu \circ \lambda^{-1}$ are equivalent. The Radon-Nikodym derivative $d\nu/d\nu^{-1}$ is denoted by $\Delta$ and called 
the \emph{modular function}. Let $\Hil$ denote the direct integral $L^2(\go*\frah, \mu)$, and suppose $\pi : A \to B(\Hil)$ is a $C_0(\go)$-linear 
representation. We will frequently use the fact that a $C_0(\go)$-linear representation can be decomposed into representations of the fibers 
of $\A$:
\[
	\pi = \int_{\go}^\oplus \pi_u \, d\mu(u), \, \, \pi_u : \A_u \to B(\Hil(u)),
\]
where the $\pi_u$ are $\mu$-a.e. nondegenerate and unique up to a null set \cite[Prop. 3.99]{geoff}. Finally, we say that the pair 
$(\pi, U)$ is \emph{covariant} if there is a $\nu$-null set $N \subset G$ such that for all $\gamma \not\in N$,
\[
	U_\gamma \pi_{s(\gamma)}(a) = \pi_{r(\gamma)}(\alpha_\gamma(a)) U_\gamma
\]
for all $a \in \A_{s(\gamma)}$. Given such a covariant representation, there is an associated representation of $\A \rtimes_\alpha G$, called
the \emph{integrated form} of $(\pi, U)$: for $f \in \Gamma_c(G, r^*\A)$, $h \in \Hil$, and $u \in \go$, we have
\[
	\pi \rtimes U(f)h(u) = \int_G \pi_u(f(\gamma)) U_\gamma f(s(\gamma)) \modu \, d\lambda^u(\gamma).
\]
Conversely, it is a consequence of Renault's Disintegration Theorem \cite[Thm. 7.12]{mw08} that every representation of $\A \rtimes_\alpha G$ 
is equivalent to the integrated form of a covariant representation. Consequently,
\[
	\norm{f} = \sup \bigl\{ \norm{\pi \rtimes U(f)} : (\pi, U) \text{ is a covariant representation of } (\A, G, \alpha) \bigr\}.
\]

Finally, we will need to invest heavily in induced representations of groupoid crossed products. There are several ways of viewing such 
representations, including a covariant ``left regular representation.'' However, we will usually opt for the avatar described in 
\cite[\S 2]{goehle2010} (or in \cite[\S 4.1]{sims-williams2013} for Fell bundles), which relies on Rieffel induction. Suppose $(\A, G, \alpha)$ 
is a groupoid dynamical system, and let $\pi : A \to B(\Hil)$ be a representation of the section algebra $A$. Then we can form a representation 
$\Ind \pi$ of $\A \rtimes_\alpha G$ as follows: the space of sections $\Zz_0 = \Gamma_c(G, s^*\A)$ is a right pre-Hilbert $A$-module with 
respect to the action
\[
	(z \cdot a)(\gamma) = z(\gamma) a(s(\gamma)), \quad z \in \Zz_0, a \in A, \gamma \in G
\]
and the $A$-valued inner product
\[
	\hip{z}{w}_A(u) = \int_G z(\xi)^*w(\xi) \, d\lambda_u(\xi), \quad z, w \in \Zz_0, u \in \go.
\]
We let $\Zz$ denote the completion of $\Zz_0$ with respect to the norm induced by $\hip{\cdot}{\cdot}_A$. Then $\A \rtimes_\alpha G$ 
acts on $\Zz$ by adjointable operators: for $f \in \Gamma_c(G, r^*\A)$ and $z \in \Zz_0$, 
\[
	f \cdot z(\gamma) = \int_G \alpha_\gamma^{-1}(f(\eta)) z(\eta^{-1}\gamma) \, d\lambda^{r(\gamma)}(\eta).
\]
We can then use Rieffel induction to construct the induced representation $\Ind \rho$. We equip $\Zz \odot \Hil$ with the inner product
characterized by
\[
	\ip{z \otimes h}{w \otimes k} = \ip{\pi\bigl( \hip{w}{z}_A \bigr) h}{k},
\]	
and we denote the completion by $\Zz \otimes_A \Hil$. Then $\Ind \pi$ acts on $\Zz \otimes_A \Hil$ by
\[
	\Ind \pi(f)(z \otimes h) = f \cdot z \otimes h
\]
for $f \in \Gamma_c(G, r^*\A)$, $z \in \Zz_0$, and $h \in \Hil$. If we take $\pi$ to be faithful, then we can define the \emph{reduced norm} on 
$\Gamma_c(G, r^*\A)$:
\[
	\norm{f}_r = \norm{\Ind \pi(f)}.
\]
The resulting completion is the \emph{reduced crossed product}, denoted by $\A \rtimes_{\alpha, r} G$.

\begin{rem}
	Some authors (such as \cite{sims-williams2013}) define the reduced norm instead as $\norm{f}_r = \sup \norm{\Ind \pi}$, where $\pi$ 
	ranges over all representations of $A$. Since induction preserves weak containment, this definition is equivalent to the one given above. In
	fact, it suffices to only consider representations lifted from the fibers of $A$. For each $u \in \go$, let $\rho_u$ be a faithful 
	representation of $A(u)$, and let $\pi_u$ denote its lift to $A$. Then $\bigcap \ker \pi_u = \{0\}$, so $\bigcap \ker \pi_u = \ker \pi$ for 
	any faithful representation $\pi$ of $A$. Consequently,
	\[
		\norm{f}_r = \sup_{u \in \go} \norm{\Ind \pi_u(f)}.
	\]
\end{rem}

\begin{rem}
We will occasionally need another flavor of induced representation. Let $(\A, G, \alpha)$ be a separable groupoid dynamical system, 
and let $\pi$ be a nondegenerate representation of $A$. Using \cite[Thm. 8.3.2]{dixmier}, assume that $\pi$ is a $C_0(\go)$-linear 
representation on a Borel Hilbert bundle $\go * \frah$ with associated finite Borel measure $\mu$. Let $\nu^{-1} = \int_{\go} \lambda_u \, d\mu$, 
and form the pullback bundle $G *_s \frah = s^*(\go*\frah)$. For $h \in L^2(G*_s \frah, \nu^{-1})$, define
\begin{equation}
\label{eq:IntegratedLR}
	L_\pi(f) h (\gamma) = \int_G \pi_{s(\gamma)} \bigl( \alpha_{\gamma}^{-1}(f(\eta)) \bigr) h(\eta^{-1} \gamma) \, 
		d\lambda^{r(\gamma)}(\eta).
\end{equation}
Then $L_\pi$ defines an $I$-norm decreasing representation of $\gcra$, which extends to a representation of $\A \rtimes_\alpha G$. 
Furthermore, it is shown in \cite[Lem. 2]{sims-williams2013} that the map $U : \Zme \otimes_A \Hil \to L^2(G *_s \frah, \nu^{-1})$ characterized 
by
	\[
		U(z \otimes h)(\gamma) = \pi_{s(\gamma)} \bigl( z(\gamma) \bigr) h(s(\gamma))
	\]
is a unitary intertwining $L_\pi$ and $\Ind \pi$.
\end{rem}

\section{Preliminaries on $C_0(X)$-algebras}
Our first goal is to prove an analogue of Green's theorem \cite[Prop. 14]{green} for groupoid crossed products. The proof relies 
on \cite[Lem. 2.75]{TFB2}, so an obvious first step would be to extend this result to groupoid dynamical systems. This in turn requires the
following: given a groupoid dynamical system $(\A, G, \alpha)$ and a $C^*$-algebra $B$, we need to define new dynamical systems 
$(\A \omax B, G, \alpha \otimes \id)$ and $(\A \osig B, G, \alpha \otimes \id)$ by allowing $G$ to act trivially on $B$. For this to even make 
sense, we need to know that $A \omax B$ and $A \osig B$ are $C_0 \bigl(G^{(0)} \bigr)$-algebras, and we need to understand the structure 
of the corresponding $C^*$-bundles $\A \omax B$ and $\A \osig B$.

\subsection{Tensor Products of $C_0(X)$-algebras}
Throughout this section, $X$ will denote a locally compact Hausdorff space. We begin by recalling some results of Kirchberg and 
Wassermann on tensor product bundles \cite{kw95}. Suppose $A$ is a $C_0(X)$-algebra with associated upper semicontinuous 
$C^*$-bundle $\A$. Let $B$ be a fixed $C^*$-algebra, and let $\norm{\cdot}_\nu$ be a $C^*$-norm on the algebraic tensor product 
$A \odot B$. Given $x \in X$, let $C_{0,x}(X) \subset C_0(X)$ denote the ideal of functions vanishing at $x$, and put 
$I_x = C_{0,x}(X) \cdot A$. Then $I_x \otimes_\nu B$ is an ideal of $A \otimes_\nu B$, and the quotient $(A \otimes_\nu B)/(I_x \otimes_\nu B)$ is
isomorphic to $\A_x \otimes_{\nu(x)} B$ for some $C^*$-norm $\norm{\cdot}_{\nu(x)}$ on $\A_x \odot B$ \cite[Prop. 3.7.2]{brownozawa}. The 
bundle $\A \otimes_{\nu} B$ with fibers given by $(\A \otimes_\nu B)_x = \A_x \otimes_{\nu(x)} B$ is an upper semicontinuous $C^*$-bundle
by \cite[Lem. 2.4]{kw95}.

Unfortunately, the norms $\norm{\cdot}_{\nu(x)}$ depend on $x \in X$, and we have no control over how they vary in general. We are 
predominantly interested in the cases where $\norm{\cdot}_\nu$ is either the maximal or minimal norm on $A \odot B$. Things are especially
nice when working with the maximal tensor product, since the sequence
\[
	\xymatrix{
		0 \ar[r] & I_x \omax B \ar[r] & A \omax B \ar[r] & \A_x \omax B \ar[r] & 0
	}
\]
is always exact \cite[Prop. B.30]{TFB1}. This ensures that
\[
	(\A \omax B)_x = \A_x \omax B
\]
for all $x \in X$. It is natural to ask whether the same sort of thing holds for $\A \osig B$. The answer hinges upon the exactness of the 
sequence
\begin{equation}
\label{eq:exactfiberseq}
\xymatrix{
	0 \ar[r] &  I_x \osig B \ar[r] & A \osig B \ar[r] & \A_x \osig B \ar[r] & 0,
}
\end{equation}
which may fail in general. However, it is observed in the proof of \cite[Cor. 2.8]{kw95}, and we prove below, that \eqref{eq:exactfiberseq} is
exact when either $A$ or $B$ is an exact $C^*$-algebra.

\begin{prop}
\label{prop:minimalfiber}
	Let $A$ be a separable $C_0(X)$-algebra and $B$ a $C^*$-algebra. If either $A$ or $B$ is exact, then $(\A \osig B)_x \cong 
	\A_x \osig B$.
\end{prop}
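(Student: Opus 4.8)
The plan is to reduce the statement to the exactness of the sequence \eqref{eq:exactfiberseq} and then treat the two hypotheses separately. By \cite[Prop. 3.7.2]{brownozawa} we already know that $(A \osig B)/(I_x \osig B) \cong \A_x \otimes_{\sigma(x)} B$ for some $C^*$-norm $\norm{\cdot}_{\sigma(x)}$ on $\A_x \odot B$, and by construction this quotient is exactly the fiber $(\A \osig B)_x$. Since the minimal norm is dominated by every $C^*$-norm on $\A_x \odot B$, we automatically have $\norm{\cdot}_{\sigma(x)} \geq \norm{\cdot}_{\min}$, so there is a canonical surjection $\A_x \otimes_{\sigma(x)} B \to \A_x \osig B$ which is the identity on the algebraic tensor product. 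Thus the proposition amounts to showing that this surjection is injective, equivalently that $\ker(q \otimes \id_B) = I_x \osig B$, where $q : A \to \A_x$ is the quotient map; that is, that \eqref{eq:exactfiberseq} is exact. The injectivity of $I_x \osig B \hookrightarrow A \osig B$ and the surjectivity of $q \otimes \id_B$ hold for any coefficient algebra, so exactness in the middle is the only point at issue.

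First I would dispose of the case where $B$ is exact. This is immediate: exactness of $B$ means precisely that minimally tensoring any short exact sequence by $B$ preserves exactness, so applying this to $0 \to I_x \to A \to \A_x \to 0$ yields exactness of \eqref{eq:exactfiberseq} at once, with no further work.

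The case where $A$ is exact is the crux, and I would approach it by comparison with the maximal tensor product, which is always exact by \cite[Prop. B.30]{TFB1}. Note first that $\A_x$, being a quotient of the exact algebra $A$, is again exact. The idea is to characterize $\ker(q \otimes \id_B)$ by slice maps: using the functionals $\id_A \otimes \omega$ for $\omega \in B^*$, one checks that $z \in \ker(q \otimes \id_B)$ if and only if $(\id_A \otimes \omega)(z) \in I_x$ for every $\omega$, so that $\ker(q \otimes \id_B)$ coincides with the Fubini product of $I_x$ and $B$ inside $A \osig B$ (the inclusion of $I_x \osig B$ into this Fubini product being automatic). It then remains to establish the slice map property, namely that the Fubini product is no larger than $I_x \osig B$. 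This is where exactness must enter: placing \eqref{eq:exactfiberseq} alongside its $\omax$-analogue (which is exact) and exploiting the exactness of $\A_x$ should force the induced norm $\norm{\cdot}_{\sigma(x)}$ to collapse onto the minimal norm. This is precisely the mechanism isolated in the proof of \cite[Cor. 2.8]{kw95}, which I would follow.

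The main obstacle is exactly this last step. Exactness of $A$ is, by definition, a statement about $A$ occupying the \emph{fixed} tensor factor, whereas here $A$ sits in the \emph{middle} of the short exact sequence being tensored by $B$, so the definition of exactness does not apply directly as it did in the $B$-exact case. Bridging this gap---transferring the intrinsic exactness of $A$ (equivalently of its quotient $\A_x$) into the slice map property for the pair $(I_x \subseteq A)$ against an \emph{arbitrary} coefficient algebra $B$---is the heart of the argument, and it is here that the comparison with the always-exact maximal tensor product does the real work.
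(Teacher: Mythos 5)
Your reduction is correct and matches the paper's: the statement is equivalent to exactness of the sequence \eqref{eq:exactfiberseq}, the case where $B$ is exact is immediate from the definition, and you have correctly identified the fiber $(\A \osig B)_x$ with the quotient $(A \osig B)/(I_x \osig B)$ and the issue with the induced norm $\norm{\cdot}_{\sigma(x)}$. You have also correctly diagnosed why the $A$-exact case is not formal: exactness of $A$ concerns $A$ as the \emph{fixed} tensor factor, whereas here $A$ sits in the middle of the sequence being tensored.

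The gap is that you stop exactly at that diagnosis. The statement you need --- that exactness of $A$ implies $0 \to I \osig B \to A \osig B \to (A/I) \osig B \to 0$ is exact for \emph{every} ideal $I$ of $A$ and \emph{every} $B$, equivalently that the Fubini product $F(I_x, B)$ collapses to $I_x \osig B$ --- is precisely Kirchberg's theorem that exact $C^*$-algebras have Property C$''$ of Archbold and Batty, and it is a genuinely deep result; it cannot be extracted from a formal comparison of \eqref{eq:exactfiberseq} with its $\omax$-analogue, since that diagram only reproduces the a priori inequality $\norm{\cdot}_{\sigma(x)} \geq \norm{\cdot}_{\min}$ and gives no upper bound on the quotient norm. (Exactness of the quotient $\A_x$ is likewise not the relevant input: the obstruction lives in the pair $(I_x, A)$, and there are exact, non-nuclear algebras $\A_x$ for which $\A_x \odot B$ carries many $C^*$-norms.) The paper closes this gap by a specific chain of citations: a separable exact $C^*$-algebra is subnuclear \cite[Thm.~IV.3.4.18]{blackadar}, nuclear $C^*$-algebras have Property C \cite[Prop.~IV.3.4.7]{blackadar}, Property C passes to $C^*$-subalgebras \cite[Cor.~IV.3.4.10]{blackadar}, and Property C implies the desired exactness of \eqref{eq:exactfiberseq} \cite[Cor.~IV.3.4.11]{blackadar}. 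Your appeal to ``the mechanism of \cite[Cor.~2.8]{kw95}'' is pointing at the right literature, but that mechanism is itself this theorem of Kirchberg; until you invoke it (or the subnuclearity route), the heart of the argument is missing rather than merely deferred.
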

\begin{proof}
	If $B$ is exact, then \eqref{eq:exactfiberseq} is clearly exact. On the other hand, if $A$ is exact, then it is subnuclear by 
	\cite[Thm. IV.3.4.18]{blackadar}. As a consequence of \cite[Prop. IV.3.4.7]{blackadar}, nuclear $C^*$-algebras 
	have Property C of Archbold and Batty \cite{archboldbatty}, so it follows from \cite[Cor. IV.3.4.10]{blackadar} that $A$ has Property C. 
	The exactness of \eqref{eq:exactfiberseq} then follows from \cite[Cor. IV.3.4.11]{blackadar}.
\end{proof}
 
The following two propositions deal with representations of maximal tensor products involving $C_0(X)$-algebras. In both $X$ denotes a 
second countable locally compact Hausdorff space.

\begin{prop}
\label{prop:linrep}
	Let $A$ be a $C_0(X)$-algebra and $B$ a $C^*$-algebra. Suppose $X*\frah$ is an analytic Borel Hilbert bundle over $X$, $\mu$ is a Borel 
	measure on $X$, and $\pi = \pi_A \omax \pi_B$ is a $C_0(X)$-linear representation of $A \omax B$ on $L^2(X*\frah, \mu)$. Then $\pi_A$ is 
	$C_0(X)$-linear.
	
	Conversely, suppose that $\pi_A$ and $\pi_B$ are representations of $A$ and $B$, respectively, on $L^2(X*\frah, \mu)$ with commuting 
	ranges, and that $\pi_A$ is $C_0(X)$-linear. Then the representation $\pi_A \omax \pi_B$ of $A \omax B$ on $L^2(X*\frah, \mu)$ is also 
	$C_0(X)$-linear.
\end{prop}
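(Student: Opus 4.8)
The plan is to recall what $C_0(X)$-linearity means concretely for a representation on a direct integral Hilbert bundle, and then verify it directly for each direction. Recall that a representation $\rho$ of a $C_0(X)$-algebra $D$ on $L^2(X*\frah,\mu)$ is $C_0(X)$-linear precisely when the diagonal action of $C_0(X)$ (by multiplication on the fibers $\Hil(x)$, viewed as decomposable operators) commutes with $\rho$, i.e. $\rho(\varphi\cdot d) = M_\varphi \rho(d)$ for $\varphi\in C_0(X)$ and $d\in D$, where $M_\varphi$ is the diagonal operator. Equivalently, $\rho$ decomposes as a direct integral of fiber representations $\int_X^\oplus \rho_x\,d\mu(x)$. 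The key structural fact I would use is that $A\omax B$ carries a $C_0(X)$-structure inherited from $A$, where $\varphi\in C_0(X)$ acts on $A\odot B$ by $\varphi\cdot(a\otimes b) = (\varphi\cdot a)\otimes b$; this is exactly the $C_0(X)$-structure whose fibers are the $\A_x\omax B$ as recorded above.

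For the forward direction I would argue as follows. Since $\pi = \pi_A\omax\pi_B$ is $C_0(X)$-linear, for every $\varphi\in C_0(X)$, $a\in A$, and $b\in B$ we have
\[
	M_\varphi\,\pi_A(a)\pi_B(b) = M_\varphi\,\pi(a\otimes b) = \pi\bigl((\varphi\cdot a)\otimes b\bigr) = \pi_A(\varphi\cdot a)\,\pi_B(b).
\]
The goal is to strip off the $\pi_B(b)$ factor. First I would take $b$ running through an approximate identity $\{b_i\}$ for $B$; since $\pi_B$ is nondegenerate, $\pi_B(b_i)\to 1$ strongly, and because $M_\varphi$ and $\pi_A(\cdot)$ are fixed bounded operators the strong limit yields $M_\varphi\,\pi_A(a) = \pi_A(\varphi\cdot a)$. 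This is precisely the statement that $\pi_A$ is $C_0(X)$-linear. The one point requiring care is the passage to the strong limit on the left-hand side, i.e. that $M_\varphi\pi_A(a)\pi_B(b_i)\to M_\varphi\pi_A(a)$ strongly, which follows since strong convergence is preserved by left multiplication by a bounded operator.

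For the converse I would reverse the computation. Given that $\pi_A$ is $C_0(X)$-linear, that $\pi_A,\pi_B$ have commuting ranges, and that $\pi_A\omax\pi_B$ is the resulting representation of $A\omax B$, I compute for $\varphi\in C_0(X)$ on an elementary tensor:
\[
	\pi_A\omax\pi_B\bigl((\varphi\cdot a)\otimes b\bigr) = \pi_A(\varphi\cdot a)\,\pi_B(b) = M_\varphi\,\pi_A(a)\,\pi_B(b) = M_\varphi\,(\pi_A\omax\pi_B)(a\otimes b),
\]
using $C_0(X)$-linearity of $\pi_A$ in the middle equality. Since elementary tensors span a dense subalgebra of $A\omax B$ and all operators involved are bounded, the identity $(\pi_A\omax\pi_B)(\varphi\cdot c) = M_\varphi\,(\pi_A\omax\pi_B)(c)$ extends to all $c\in A\omax B$ by continuity, giving the desired $C_0(X)$-linearity. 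The main obstacle here is a bookkeeping one rather than a conceptual one: I must confirm that the $C_0(X)$-action on $A\omax B$ is genuinely the one induced from $A$ alone (so that $\varphi\cdot(a\otimes b) = (\varphi\cdot a)\otimes b$ and the multiplier $M_\varphi$ on $L^2(X*\frah,\mu)$ is the same diagonal operator in both directions), and that the decomposable-operator characterization of $C_0(X)$-linearity used above is the one in force in the excerpt. Once those identifications are pinned down, both implications are short strong-limit and density arguments.
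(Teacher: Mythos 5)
Your proposal is correct and follows essentially the same route as the paper: compute $\pi\bigl(f\cdot(a\otimes b)\bigr)$ two ways as $\pi_A(f\cdot a)\pi_B(b)$ and $T_f\pi_A(a)\pi_B(b)$, then cancel the $\pi_B(b)$ factor using nondegeneracy of $\pi_B$, and reverse the computation for the converse. Your use of an approximate identity and strong limits merely makes explicit the step the paper compresses into ``since $\pi_B$ is nondegenerate, it follows that $\pi_A(f\cdot a) = T_f\pi_A(a)$.''
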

\begin{proof}
	For $f \in C_0(X)$, $a \in A$, and $b \in B$, we have
	\[
		\pi \bigl( f \cdot (a \otimes b) \bigr) = \pi_A \omax \pi_B \bigl( (f \cdot a) \otimes b \bigr) = \pi_A (f \cdot a) \pi_B(b).
	\]
	But since $\pi$ is $C_0(X)$-linear, we also have
	\[
		\pi \bigl( f \cdot (a \otimes b) \bigr) = T_f \pi(a \otimes b) = T_f \pi_A(a) \pi_B(b),
	\]
	where $T_f \in B(L^2(X*\frah, \mu))$ is given by pointwise multiplication by $f$. Since $\pi_B$ is nondegenerate, it follows that 
	$\pi_A (f \cdot a) = T_f \pi_A(a)$, so $\pi_A$ is $C_0(X)$-linear. The converse is similar. 
\end{proof}

\begin{prop}
\label{prop:linrep2}
	Let $A$ be a separable $C_0(X)$-algebra and $B$ a separable $C^*$-algebra. If $X*\frah$ is an analytic Borel Hilbert bundle, $\mu$ 
	is a finite Borel measure on $X$, and $\pi_A \omax \pi_B : A \omax B \to B(L^2(X*\frah, \mu))$ is a $C_0(X)$-linear representation, 
	then $\pi_B$ is decomposable. Moreover, if $\{ \pi_{A,x} \}_{x \in X}$ is a decomposition of $\pi_A$ into representations of the fibers 
	$\A_x$, and $\{ \pi_{B, x} \}_{x \in X}$ is a decomposition of $\pi_B$, then 
	\begin{equation}
	\label{eq:tensordecomp}
		\pi_A \omax \pi_B(t) = \int_X^\oplus \pi_{A, x} \omax \pi_{B,x} \bigl( t(x) \bigr) \, d\mu(x)
	\end{equation}
	for all $t \in A \omax B$.
\end{prop}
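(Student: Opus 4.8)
The plan is to first decompose $\pi_A$ using the previous proposition, then to show $\pi_B$ is decomposable by pushing its range into the commutant of the diagonal, and finally to match fibers to obtain \eqref{eq:tensordecomp}. First I would apply Proposition~\ref{prop:linrep}: since $\pi_A \omax \pi_B$ is $C_0(X)$-linear, so is $\pi_A$, and therefore \cite[Prop.~3.99]{geoff} yields a decomposition $\pi_A = \int_X^\oplus \pi_{A,x}\,d\mu(x)$ into representations of the fibers $\A_x$. The key elementary observation is that each diagonal operator $T_f$ (for $f \in C_0(X)$, acting by pointwise multiplication) lies in the strong closure of $\pi_A(A)$: taking an approximate unit $\{e_i\}$ for $A$, nondegeneracy gives $\pi_A(e_i) \to 1$ strongly, and $C_0(X)$-linearity gives $\pi_A(f\cdot e_i) = T_f\,\pi_A(e_i) \to T_f$ strongly.

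Next I would establish that $\pi_B$ is decomposable. Because $\pi_A\omax\pi_B$ is a representation of the \emph{maximal} tensor product, the ranges of $\pi_A$ and $\pi_B$ commute, so each $\pi_B(b)$ commutes with $\pi_A(A)$ and hence with its strong closure; in particular $\pi_B(b)$ commutes with every $T_f$. Since $X$ is second countable and $\mu$ is finite, the von Neumann algebra generated by $\{T_f : f \in C_0(X)\}$ is the full algebra of diagonalizable operators, whose commutant is exactly the algebra of decomposable operators on $L^2(X*\frah,\mu)$. Thus $\pi_B(b)$ is decomposable for every $b \in B$, and by separability of $B$ standard direct integral theory provides a decomposition $\pi_B = \int_X^\oplus \pi_{B,x}\,d\mu(x)$. (Alternatively, one may apply \cite[Prop.~3.99]{geoff} directly to the $C_0(X)$-linear representation $f\otimes b \mapsto T_f\,\pi_B(b)$ of the $C_0(X)$-algebra $C_0(X)\otimes B$, each of whose fibers is $B$.)

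Finally I would verify \eqref{eq:tensordecomp}. Decomposing the commutation relation $\pi_A(a)\pi_B(b) = \pi_B(b)\pi_A(a)$ and invoking separability shows that the ranges of $\pi_{A,x}$ and $\pi_{B,x}$ commute for almost every $x$, so $\pi_{A,x}\omax\pi_{B,x}$ is a genuine representation of the fiber $(\A\omax B)_x = \A_x\omax B$. For an elementary tensor $a \otimes b$, decomposing the product $\pi_A(a)\pi_B(b)$ fiberwise gives
\[
	\pi_A\omax\pi_B(a\otimes b) = \int_X^\oplus \pi_{A,x}(a(x))\,\pi_{B,x}(b)\,d\mu(x) = \int_X^\oplus \pi_{A,x}\omax\pi_{B,x}\bigl((a\otimes b)(x)\bigr)\,d\mu(x),
\]
so $\pi_A\omax\pi_B$ and the field $t \mapsto \int_X^\oplus \pi_{A,x}\omax\pi_{B,x}(t(x))\,d\mu(x)$ agree on $A \odot B$. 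The main obstacle is upgrading this to all of $A\omax B$ through the essential uniqueness of the fiber decomposition: I expect to use the separability of $A$ and $B$ to fix a countable dense subset of $A \odot B$ on which both decompositions agree off a single $\mu$-null set, so that the fiber representations coincide a.e.\ on the dense subalgebra $\A_x \odot B$ and hence on all of $\A_x\omax B$. Equation \eqref{eq:tensordecomp} then holds for every $t \in A\omax B$.
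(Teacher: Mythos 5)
Your proposal is correct and follows essentially the same route as the paper: both arguments reduce the decomposability of $\pi_B$ to showing that each $\pi_B(b)$ commutes with the diagonal operators $T_f$ (you via the strong-limit $T_f = \lim \pi_A(f\cdot e_i)$, the paper via the identity $\pi_A(a)\pi_B(b)T_f = \pi_A(a)T_f\pi_B(b)$ together with nondegeneracy of $\pi_A$ --- a cosmetic difference), and both then verify \eqref{eq:tensordecomp} on elementary tensors and invoke the essential uniqueness of direct integral decompositions. Your explicit handling of the countable dense subset and of the a.e.\ commutation of the fiber ranges makes precise a step the paper leaves implicit, but the underlying argument is the same.
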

\begin{proof}
	Since $T \in L^2(X*\frah, \mu)$ is decomposable if and only if it commutes with the set $\Delta(X*\frah, \mu)$ of diagonal 
	operators \cite[Thm. F.21]{TFB2}, we need to check that $\pi_B(b) \in ( \Delta(X *\frah, \mu) )'$ for all $b \in B$. Fix $b \in 
	B$, and let $f \in C_0(X)$ and $a \in A$. Then 
	\[
		\pi_A(a) \pi_B(b) T_f = \pi_A \omax \pi_B(a \otimes b) T_f = T_f \bigl( \pi_A \omax \pi_B (a \otimes b) \bigr)
	\]
	since $\pi_A \omax \pi_B$ is $C_0(X)$-linear, hence decomposable. Continuing, we get
	\[
		T_f \bigl( \pi_A \omax \pi_B (a \otimes b) \bigr) = T_f \pi_A(a) \pi_B(b) = \pi_A(a) T_f \pi_B(b),
	\]
	since $\pi_A$ is decomposable. Now since $\pi_A$ is nondegenerate, if follows that $\pi_B(b) T_f = T_f \pi_B(b)$, so $\pi_B(b) \in 
	(\Delta(X*\frah, \mu))'$ for all $b \in B$. Thus the operators $\pi_B(b)$ are all decomposable, so there is a family of maps 
	$\pi_{B,x} : B \to B(\Hil(x))$ such that $\pi_{B,x}(b) = \pi_{B}(b)(x)$. The proof of \cite[Prop. 3.99]{geoff} implies that modifying  
	the $\pi_{B,x}$ on a $\mu$-null set will yield nondegenerate representations of $B$ for $\mu$-almost all $x$.
	
	For the second assertion, let $\{\pi_{B, x}\}_{x \in X}$ denote a decomposition of $\pi_B$, and suppose $\{\pi_{A,x}\}_{x \in X}$ is a 
	decomposition of $\pi_A$ into representations of the fibers $\A_x$. Then for each $x \in X$, $\pi_{A, x} \omax \pi_{B, x}$ is a 
	representation of $(\A \omax B)_x = \A_x \omax B$, and $\pi_{A, x} \omax \pi_{B, x}$ is nondegenerate for $\mu$-almost all $x$ since 
	$\pi_{A,x}$ and $\pi_{B,x}$ are. Now let $\{\pi_x\}_{x \in X}$ be a decomposition of $\pi_A \omax \pi_B$, so
	\[
		\pi_A \omax \pi_B(t)h(x) = \pi_x \bigl( t(x) \bigr) h(x)
	\]
	for all $x \in X$ and $h \in L^2(X*\frah, \mu)$. Then we have
	\begin{align*}
		\pi_A \omax \pi_B(a \otimes b)h(x) &= \pi_A(a) \pi_B(b) h(x) \\
			&= \pi_{A, x} \bigl( a(x) \bigr) \pi_{B,x}(b) h(x) \\
			&= \pi_{A,x} \omax \pi_{B,x} \bigl( a(x) \otimes b \bigr) h(x)
	\end{align*}
	for all $a \in A$, $b \in B$, and $h \in L^2(X*\frah, \mu)$. Since the $\pi_x$ are unique up to a $\mu$-null set, it follows that
	$\pi_x = \pi_{A,x} \omax \pi_{B,x}$ $\mu$-almost everywhere. 
\end{proof}

\subsection{Pullbacks and $C_0(X)$-linear Homomorphisms}
Let $Y$ be a locally compact Hausdorff space, and suppose $\tau : Y \to X$ is continuous. If $A$ is a $C_0(X)$-algebra with associated 
upper semicontinuous bundle $p: \A \to X$, recall that the \emph{pullback bundle} $\tau^*\A$ over $Y$ is the bundle with total space
\[
	\tau^*\A = \left\{ (y, a) \in Y \times \A : p(a) = \tau(y) \right\}
\]
and structure map $q : \tau^*\A \to Y$ given by $q(y, a) = y$. The \emph{pullback $C^*$-algebra} is 
\[
	\tau^*A = \Gamma_0(Y, \tau^*\A).
\]
One can verify \cite[Prop. 3.34]{geoff} that $\tau^*\A$ is an upper semicontinuous $C^*$-bundle, so $\tau^*A$ can be viewed as a 
$C_0(Y)$-algebra. 

It is often helpful to approximate sections of a pullback bundle with ``elementary tensors.'' Suppose $X$ and $Y$ are locally compact 
Hausdorff spaces, $A$ is a $C_0(X)$-algebra with associated upper-semicontinuous bundle $\A$, and $\tau : Y \to X$ is continuous. Given 
$z \in C_c(Y)$ and $a \in A$, we define $z \otimes a \in \Gamma_c(Y, \tau^*\A)$ by
\[
	z \otimes a(y) = z(y)a(\tau(y)).
\]
Then the set
\[
	C_c(Y) \odot A = \spa \{ z \otimes a : z \in C_c(Y), a \in A\}
\]
is dense in $\Gamma_c(Y, \tau^*\A)$ with respect to the inductive limit topology \cite[Cor. 3.45]{geoff}. We prove a similar fact for tensor 
product bundles. Let $B$ be a $C^*$-algebra. Given $f \in \Gamma_c(Y, \tau^*\A)$ and $b \in B$, define $f \ohat b \in 
\Gamma_c(Y, \tau^*(\A \omax B))$ by
\[
	f \ohat b(y) = f(y) \otimes b
\]
using the canonical identification of $\tau^*(\A \omax B)_y$ with $\A_{\tau(y)} \omax B$. In this way, $\Gamma_c(Y, \tau^*\A) \odot B$ can be 
viewed naturally as a subspace of $\Gamma_c(Y, \tau^*(\A \omax B))$ via the embedding $f \otimes b \mapsto f \ohat b$.

\begin{prop}
\label{prop:tensorILT}
	The set
	\[
		\Gamma_c(Y, \tau^*\A) \odot B = \spa \{ f \ohat b : f \in \Gamma_c(Y, \tau^*\A), b \in B \}
	\]
	is dense in $\Gamma_c(Y, \tau^*(\A \omax B))$ with respect to the inductive limit topology.
\end{prop}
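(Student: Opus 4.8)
The plan is to bootstrap from the analogous density result for ordinary pullback sections. Applying \cite[Cor. 3.45]{geoff} to the $C_0(X)$-algebra $A \omax B$ and its bundle $\A \omax B$, the span $C_c(Y) \odot (A \omax B)$ of elementary sections $z \otimes t$, with $z \in C_c(Y)$ and $t \in A \omax B$, is dense in $\Gamma_c(Y, \tau^*(\A \omax B))$ in the inductive limit topology. Since density is transitive, it then suffices to show that each such $z \otimes t$ lies in the inductive-limit closure of $\Gamma_c(Y, \tau^*\A) \odot B$.

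First I would record the key algebraic identity. For $z \in C_c(Y)$, $a \in A$, and $b \in B$, evaluating at $y \in Y$ and using the fiber identification $(\A \omax B)_{\tau(y)} = \A_{\tau(y)} \omax B$ shows that both sides of
\[
	z \otimes (a \otimes b) = (z \otimes a) \ohat b
\]
take the value $z(y) \bigl( a(\tau(y)) \otimes b \bigr)$ at $y$. Because $z \otimes a \in C_c(Y) \odot A \subseteq \Gamma_c(Y, \tau^*\A)$, the right-hand side lies in $\Gamma_c(Y, \tau^*\A) \odot B$. Extending linearly, every section of the form $z \otimes t'$ with $t' \in A \odot B$ belongs to $\Gamma_c(Y, \tau^*\A) \odot B$.

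It remains to approximate $z \otimes t$ for arbitrary $t \in A \omax B$. Given $\varepsilon > 0$, I would use the density of $A \odot B$ in $A \omax B$ to choose $t' \in A \odot B$ with $\norm{t - t'} < \varepsilon$; by the previous paragraph, $z \otimes t' \in \Gamma_c(Y, \tau^*\A) \odot B$. To see that $z \otimes t'$ is close to $z \otimes t$ in the inductive limit topology, note first that all these sections are supported in the fixed compact set $\supp z$, so only a uniform estimate on the bundle is needed. For this I would invoke that evaluation at $x$ is precisely the quotient map $A \omax B \to (A \omax B)_x = \A_x \omax B$ arising from the exact sequence $0 \to I_x \omax B \to A \omax B \to \A_x \omax B \to 0$ of \cite[Prop. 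B.30]{TFB1}, and is therefore norm-decreasing. Hence, for every $y$,
\[
	\bignorm{(z \otimes t)(y) - (z \otimes t')(y)} = \abs{z(y)} \, \bignorm{t(\tau(y)) - t'(\tau(y))} \le \norm{z}_\infty \, \norm{t - t'} < \norm{z}_\infty \, \varepsilon,
\]
which is uniform in $y$. This exhibits $z \otimes t$ as an inductive-limit limit of elements of $\Gamma_c(Y, \tau^*\A) \odot B$, completing the argument. The only genuine point to watch is this last uniform estimate, and it reduces entirely to the contractivity of the fiberwise quotient maps; the rest is a formal manipulation of elementary tensors.
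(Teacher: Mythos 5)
Your proposal is correct and follows essentially the same route as the paper's proof: reduce via \cite[Cor. 3.45]{geoff} to approximating $z \otimes t$, observe the identity $z \otimes (a \otimes b) = (z \otimes a) \ohat b$ to handle $t \in A \odot B$, and use the contractivity of evaluation in the fibers for the uniform estimate with supports fixed inside $\supp z$. The only cosmetic difference is that you justify the norm-decreasing property of evaluation explicitly via the fiber quotient maps, a point the paper uses implicitly.
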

\begin{proof}
	Since $C_c(Y) \odot (A \omax B)$ is dense in $\Gamma_c(Y, \tau^*(\A \omax B)$ with respect to the inductive limit topology, it 
	suffices to show that we can approximate any element of $C_c(Y) \odot (A \omax B)$ with elements of $\Gamma_c(Y, \tau^*\A) \odot B$.
	Let $z \in C_c(Y)$ and $t \in A \omax B$, and choose a sequence $\{t_i\} \subset A \odot B$ that converges to $t$ in 
	$A \omax B$. Given $\varepsilon > 0$, choose $i$ large enough such that $\norm{t_i - t}_{\rm{max}} < \varepsilon/\norm{z}_\infty$. 
	Then
	\[
		\norm{z \otimes t_i(y) - z \otimes t(y)} = \norm{z(y)t_i(\tau(y)) - z(y)t(\tau(y))} \leq \norm{z}_\infty \norm{t_i - t}_{\rm{max}} < \varepsilon
	\]
	for any $y \in Y$, so $z \otimes t_i \to z \otimes t$ uniformly. In addition, we clearly have $\supp(z \otimes t_i) \subset \supp(z)$ for all $i$, 
	so $z \otimes t_i \to z \otimes t$ in the inductive limit topology. Now fix $i$ and write $t_i = \sum a_{ij} \otimes b_{ij}$. Then for each 
	$y \in Y$,
	\begin{align*}
		z \otimes t_i(y) &=  z(y) \sum (a_{ij} \otimes b_{ij})(\tau(y)) \\
			&= \sum z(y)a_{ij}(\tau(y)) \otimes b_{ij} \\
			&= \sum (z \otimes a_{ij})(\tau(y)) \otimes b_{ij} \\
			&= \bigl( \sum (z \otimes a_{ij}) \ohat b_{ij} \bigr)(y),
	\end{align*}
	so $z \otimes t_i = \sum (z \otimes a_{ij}) \ohat b_{ij} \in \Gamma_c(Y, \tau^*\A) \odot B$. The result then follows.
\end{proof}

We will often work with homomorphisms between $C_0(X)$-algebras. Recall the following definition.

\begin{defn}
	Let $A$ and $B$ be $C_0(X)$-algebras. A homomorphism $\varphi : A \to B$ is said to be \emph{$C_0(X)$-linear} if
	\[
		\varphi(f \cdot a) = f \cdot \varphi(a)
	\]
	for all $f \in C_0(X)$ and $a \in A$.
\end{defn}

\begin{rem}
\label{rem:bundlecorrespondence}
	Given two $C_0(X)$-algebras $A$ and $B$ with associated
	bundles $\A$ and $\B$, there is a correspondence between $C_0(X)$-linear homomorphisms from $A$ to $B$ and $C^*$-bundle
	morphisms from $\A$ to $\B$. If $\varphi : A \to B$ is a $C_0(X)$-linear homomorphism, then for each $x \in X$ there is
	a homomorphism $\varphi_x : \A_x \to \B_x$ satisfying
	\begin{equation}
	\label{eq:c0x-1}
		\varphi(a)(x) = \varphi_x(a(x))
	\end{equation}
	for all $a \in A$. Moreover, the $\varphi_x$ vary continuously with $x$, and thus glue together to yield a bundle morphism $\hat{\varphi} : 
	\A \to \B$. Conversely, if $\psi : \A \to \B$ is a $C^*$-bundle homomorphism, there is a $C_0(X)$-linear homomorphism 
	$\check{\psi} : A \to B$ given by
	\[
		\check{\psi}(a)(x) = \psi(a(x))
	\]
	for all $a \in A$. This is the content of \cite[Prop. 3.20]{geoff}, and it is discussed in \cite[\S 3]{mw08}.
\end{rem}

Our next goal is to extend Remark \ref{rem:bundlecorrespondence} to pullbacks of $C_0(X)$-linear homomorphisms. We will need to use the
following fact from \cite[Prop. 3.35]{geoff} and \cite[Rem. 3.5]{mw08}: a section $f : Y \to \tau^*\A$ is continuous if and only if there is a 
continuous function $\tilde{f} : Y \to \A$ such that $p \bigl( \tilde{f}(y) \bigr) = \tau(y)$ and $f(y) = \bigl( y, \tilde{f}(y) \bigr)$ for all $y \in Y$. 
Moreover, $\tilde{f}$ vanishes at infinity if and only if $f$ does, and $\tilde{f}$ is compactly supported if and only if $f$ is.

\begin{prop}
\label{prop:pullbackhom}
	Let $A$ and $B$ be $C_0(X)$-algebras with upper semicontinuous bundles $\A$ and $\B$, respectively, and suppose 
	$\varphi : A \to B$ is a $C_0(X)$-linear homomorphism. Let $\tau : Y \to X$ be a continuous map. Then there is a $C_0(Y)$-linear 
	homomorphism $\tau^*\varphi : \tau^*A \to \tau^*B$ given by
	\[
		\tau^*\varphi(f)(y) = \hat{\varphi}\bigl( \tilde{f}(y) \bigr)
	\]
	for all $f \in \tau^*A$ and $y \in Y$. Moreover, $\tau^*\varphi$ carries $\Gamma_c(Y, \tau^*\A)$ into $\Gamma_c(Y, \tau^*\B)$.
\end{prop}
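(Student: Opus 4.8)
The plan is to build $\tau^*\varphi$ directly from the fiberwise data furnished by Remark~\ref{rem:bundlecorrespondence} and then check the required properties in turn. By that remark, the $C_0(X)$-linear homomorphism $\varphi$ induces fiber maps $\varphi_x : \A_x \to \B_x$ that glue to a continuous bundle morphism $\hat\varphi : \A \to \B$ with $\hat\varphi|_{\A_x} = \varphi_x$. Fix $f \in \tau^*A$ and let $\tilde f : Y \to \A$ be the associated continuous map satisfying $p(\tilde f(y)) = \tau(y)$, so that $\tilde f(y) \in \A_{\tau(y)}$. Since $\hat\varphi$ preserves fibers, $\hat\varphi(\tilde f(y)) = \varphi_{\tau(y)}(\tilde f(y)) \in \B_{\tau(y)}$, and thus the proposed formula indeed lands in $(\tau^*\B)_y = \B_{\tau(y)}$.

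The first and principal point is to verify that $\tau^*\varphi(f)$ is a genuine element of $\tau^*B = \Gamma_0(Y, \tau^*\B)$. Here I would invoke the characterization of continuous sections of a pullback bundle recalled just before the statement: the section $y \mapsto (y, \hat\varphi(\tilde f(y)))$ is continuous precisely because its ``tilde'', namely $\hat\varphi \circ \tilde f : Y \to \B$, is a continuous map into the total space of $\B$ with the correct fiber condition $p(\hat\varphi(\tilde f(y))) = \tau(y)$. Continuity of $\hat\varphi \circ \tilde f$ is immediate, being a composition of the continuous section $\tilde f$ with the continuous bundle morphism $\hat\varphi$. For vanishing at infinity I would use that each $\varphi_x$ is a $*$-homomorphism, hence contractive, so $\norm{\tau^*\varphi(f)(y)} = \norm{\varphi_{\tau(y)}(\tilde f(y))} \le \norm{\tilde f(y)} = \norm{f(y)}$; since $f \in \Gamma_0$, this forces $\hat\varphi \circ \tilde f$ to vanish at infinity, whence $\tau^*\varphi(f) \in \Gamma_0(Y, \tau^*\B)$. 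This step, establishing continuity of the image section, is where I expect the only real subtlety to lie, and it rests entirely on the cited characterization together with the continuity of $\hat\varphi$.

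The remaining verifications are routine and carried out fiberwise. Because the algebraic operations in $\tau^*A$ and $\tau^*B$ are pointwise (so that $\widetilde{fg}(y) = \tilde f(y)\tilde g(y)$ and $\widetilde{f^*}(y) = \tilde f(y)^*$ in the fiber $\A_{\tau(y)}$), and because each $\varphi_{\tau(y)}$ is a $*$-homomorphism, the linearity, multiplicativity, and $*$-preservation of $\tau^*\varphi$ all follow by applying $\varphi_{\tau(y)}$ in each fiber. For $C_0(Y)$-linearity I would note that $\widetilde{\psi \cdot f}(y) = \psi(y)\tilde f(y)$ for $\psi \in C_0(Y)$, so that $\hat\varphi(\psi(y)\tilde f(y)) = \psi(y)\hat\varphi(\tilde f(y))$ yields $\tau^*\varphi(\psi \cdot f) = \psi \cdot \tau^*\varphi(f)$. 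Finally, for the compact-support assertion, if $f \in \Gamma_c(Y, \tau^*\A)$ then $\tilde f$ is compactly supported by the cited fact; since $\hat\varphi(\tilde f(y)) = 0$ wherever $\tilde f(y) = 0$, we obtain $\supp(\tau^*\varphi(f)) \subseteq \supp(f)$, which is compact, and hence $\tau^*\varphi$ carries $\Gamma_c(Y, \tau^*\A)$ into $\Gamma_c(Y, \tau^*\B)$.
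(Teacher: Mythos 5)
Your proposal is correct and follows essentially the same route as the paper: define $\tau^*\varphi$ fiberwise via $\hat\varphi$, verify that the image section is continuous using the tilde characterization of sections of a pullback bundle together with the continuity of the bundle morphism $\hat\varphi$, and then check the algebraic and $C_0(Y)$-linearity properties pointwise in the fibers. Your explicit norm estimate $\norm{\varphi_{\tau(y)}(\tilde f(y))} \le \norm{f(y)}$ for vanishing at infinity and the observation $\supp(\tau^*\varphi(f)) \subseteq \supp(f)$ for the compact-support claim are slightly more detailed than the paper's treatment, but the argument is the same.
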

\begin{proof}
	Define $\tau^*\varphi$ as above. We first need to check that if $f \in \Gamma_0(X, \tau^*\A)$, then $\tau^*\varphi(f) \in \Gamma_0(Y, \tau^*\B)
	$. We know that $\tilde{f}(y) \in 
	\A_{\tau(y)}$ for all $y \in Y$, so $\hat{\varphi} \bigl( \tilde{f}(y) \bigr) \in \B_{\tau(y)}$. Thus $\tau^*\varphi(f)(y) \in \tau^*\B_y$, and 
	$\tau^*\varphi(f)$ is a section of $\tau^*\B$. Since $y \mapsto \hat{\varphi}\bigl( \tilde{f}(y \bigr)$ from $Y \to \B$ is continuous, 
	$\tau^*\varphi(f)$ is continuous and vanishes at infinity since $\tilde{f}$ does. Thus 
	$\tau^*\varphi$ maps $\tau^*A$ into $\tau^*B$.
	
	The only thing left to check is that $\tau^*\varphi$ is a $C_0(Y)$-linear homomorphism. If $f, g \in \tau^*\A$, it is straightforward to 
	check that $(f+g)^\sim = \tilde{f} + \tilde{g}$, so
	\[
		\tau^*\varphi(f+g)(y) = \hat{\varphi} \bigl( \tilde{f}(y) \bigr) + \hat{\varphi} \bigl( \tilde{g}(y) \bigr) 
			= \tau^*\varphi(f)(y) + \tau^*\varphi(g)(y).
	\]
	A similar argument shows that $\tau^*\varphi$ respects multiplication, scalar multiplication, and adjoints, so $\tau^*\varphi$ is a 
	homomorphism. Finally, suppose $\sigma \in C_0(Y)$. Then it is straightforward to check that $(\sigma \cdot f)^\sim(y) = \sigma(y) 
	\tilde{f}(y)$ for all $y \in Y$, so
	\[
		\tau^*\varphi( \sigma \cdot f)(y) = \sigma(y) \hat{\varphi} \bigl( \tilde{f}(y) \bigr) = \bigl( \sigma \cdot \tau^*\varphi(f) \bigr)(y).
	\]
	Thus $\tau^*\varphi$ is $C_0(Y)$-linear, and we are done.
\end{proof}

In light of the natural identification of $(\tau^*\A)_y$ with $\A_{\tau(y)}$, we will generally suppress the tilde in Proposition 
\ref{prop:pullbackhom}, and simply write
\[
	\tau^*\varphi(f)(y) = \hat{\varphi}(f(y)) = \varphi_{\tau(y)}(f(y)).
\]
Since $\tau^*\varphi$ is $C_0(Y)$-linear, it admits a fiberwise decomposition as in Remark \ref{rem:bundlecorrespondence}. Moreover, the 
identifications $(\tau^*\A)_y = \A_{\tau(y)}$ and $(\tau^*\B)_y = \B_{\tau(y)}$ lend a concrete description of the homomorphism 
$(\tau^*\varphi)_y : (\tau^*\A)_y \to (\tau^*\B)_y$.

\begin{prop}
\label{prop:pullbackhom2}
	For all $y \in Y$ and $a \in (\tau^*\A)_y = \A_{\tau(y)}$, we have 
	\[
		(\tau^*\varphi)_y(a) = \varphi_{\tau(y)}(a).
	\]
\end{prop}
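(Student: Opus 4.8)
The plan is to combine the two descriptions of $\tau^*\varphi$ already in hand and then appeal to surjectivity of evaluation onto the fibers. Since $\tau^*\varphi : \tau^*A \to \tau^*B$ is a $C_0(Y)$-linear homomorphism by Proposition \ref{prop:pullbackhom}, Remark \ref{rem:bundlecorrespondence} (applied to the $C_0(Y)$-algebras $\tau^*A$ and $\tau^*B$) guarantees that its fiber map $(\tau^*\varphi)_y$ is the unique homomorphism $(\tau^*\A)_y \to (\tau^*\B)_y$ satisfying $\tau^*\varphi(f)(y) = (\tau^*\varphi)_y\bigl(f(y)\bigr)$ for every $f \in \tau^*A$. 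On the other hand, the description of $\tau^*\varphi$ recorded just after Proposition \ref{prop:pullbackhom} gives $\tau^*\varphi(f)(y) = \varphi_{\tau(y)}\bigl(f(y)\bigr)$ under the identification $(\tau^*\A)_y = \A_{\tau(y)}$. Equating the two expressions yields
\[
    (\tau^*\varphi)_y\bigl(f(y)\bigr) = \varphi_{\tau(y)}\bigl(f(y)\bigr)
\]
for all $f \in \tau^*A$.

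It remains to promote this to an identity on all of $(\tau^*\A)_y = \A_{\tau(y)}$, that is, to observe that every element of the fiber is of the form $f(y)$ for some section $f$. This is precisely the surjectivity of the evaluation map $\tau^*A \to (\tau^*A)_y$, which holds because the fiber is by definition the quotient of $\tau^*A$ by the ideal of sections vanishing at $y$. Concretely, given $a \in \A_{\tau(y)}$, I would choose $a' \in A$ with $a'(\tau(y)) = a$ (using surjectivity of evaluation for $A$ itself) and $z \in C_c(Y)$ with $z(y) = 1$; then $f = z \otimes a' \in \Gamma_c(Y, \tau^*\A)$ satisfies $f(y) = z(y) a'(\tau(y)) = a$. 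Substituting this $f$ into the displayed identity gives $(\tau^*\varphi)_y(a) = \varphi_{\tau(y)}(a)$, as desired.

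There is no serious obstacle here: the content is entirely in unwinding the two characterizations of $\tau^*\varphi$ together with the standard fact that global sections exhaust each fiber. The only point demanding any care is ensuring that the identity $\tau^*\varphi(f)(y) = \varphi_{\tau(y)}(f(y))$ is invoked with the correct identifications $(\tau^*\A)_y = \A_{\tau(y)}$ and $(\tau^*\B)_y = \B_{\tau(y)}$, so that both fiber maps in the display are genuinely maps between the same pair of spaces and the equality is meaningful.
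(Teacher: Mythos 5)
Your argument is correct and follows essentially the same route as the paper's proof: identify $(\tau^*\varphi)_y$ via the characterization $(\tau^*\varphi)_y(f(y)) = \tau^*\varphi(f)(y)$, substitute the definition $\tau^*\varphi(f)(y) = \hat{\varphi}(f(y)) = \varphi_{\tau(y)}(f(y))$, and conclude. The only difference is that you explicitly justify the surjectivity of evaluation onto the fiber (with the $z \otimes a'$ construction), a step the paper leaves implicit in ``the result follows.''
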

\begin{proof}
	If $y \in Y$ then $(\tau^*\varphi)_y$ is characterized by
	\[
		(\tau^*\varphi)_y(f(y)) = \tau^*\varphi(f)(y)
	\]
	for all $f \in \Gamma_0(Y, \tau^*\A)$. By definition, $\tau^*\varphi(f)(y) = \hat{\varphi}(f(y))$, where we view $f(y)$ as an element 
	of $(\tau^*\A)_y = \A_{\tau(y)}$. But $\hat{\varphi}(f(y)) = \varphi_{\tau(y)}(f(y))$, and the result follows.
\end{proof}

\subsection{Ideals and Quotients}
Exactness for groupoid crossed products naturally involves ideals and quotients of $C_0(X)$-algebras. It is a crucial fact that any ideal of a 
$C_0(X)$-algebra is again a $C_0(X)$-algebra, and similarly for quotients. Moreover, the structures of the associated bundles are fairly predictable. 
We begin with ideals.

\begin{prop}
\label{prop:C0Xideal}
	Let $A$ be a $C_0(X)$-algebra and $I$ an ideal of $A$. Then $I$ is also a $C_0(X)$-algebra. More precisely, if $\Phi_A : C_0(X) \to 
	ZM(A)$ implements the action of $C_0(X)$ on $A$, then $I$ is invariant under $\Phi_A(C_0(X))$ and the action of $C_0(X)$ on $I$ is 
	simply the restriction of the action on $A$.
\end{prop}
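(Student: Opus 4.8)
The plan is to verify the two claimed assertions in turn: first that $I$ is invariant under the maps $\Phi_A(f)$ for $f \in C_0(X)$, and then that the restricted action exhibits $I$ as a $C_0(X)$-algebra, with $\Phi_I := \Phi_A(\,\cdot\,)|_I$ taking values in $ZM(I)$. Throughout I would exploit that $I$ is a closed two-sided ideal, hence a $C^*$-algebra possessing an approximate identity $(e_\lambda)$ satisfying $e_\lambda i \to i$ and $i e_\lambda \to i$ for every $i \in I$.

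First I would establish invariance. For $i \in I$ and $f \in C_0(X)$ we have
\[
	\Phi_A(f) i = \lim_\lambda \Phi_A(f)(e_\lambda i) = \lim_\lambda \bigl( \Phi_A(f) e_\lambda \bigr) i.
\]
Now $\Phi_A(f) e_\lambda \in A$ since $\Phi_A(f) \in M(A)$ and $e_\lambda \in A$, so $\bigl( \Phi_A(f) e_\lambda \bigr) i \in A \cdot I \subseteq I$, and because $I$ is closed the limit lies in $I$ as well. Thus $\Phi_A(f) i \in I$, so the action of $C_0(X)$ on $A$ restricts to an action on $I$. (In fact this argument shows that every multiplier of $A$ carries $I$ into $I$, a fact I will reuse below.)

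Next I would promote this restricted action to a nondegenerate $*$-homomorphism $\Phi_I : C_0(X) \to M(I)$. Since $\Phi_A(f)$ is central in $M(A)$, left and right multiplication by $\Phi_A(f)$ agree on $A$, and both carry $I$ into $I$; the pair $\bigl( i \mapsto \Phi_A(f) i, \, i \mapsto i\,\Phi_A(f) \bigr)$ is then readily checked to be a double centralizer of $I$, defining $\Phi_I(f) \in M(I)$. Because $I$ is invariant, restriction $M(A) \to M(I)$ is a $*$-homomorphism, so $\Phi_I$ is a $*$-homomorphism and, by construction, $\Phi_I(f) i = \Phi_A(f) i$. For nondegeneracy I would take an approximate identity $(u_\mu)$ of $C_0(X)$; nondegeneracy of $\Phi_A$ gives $\Phi_A(u_\mu) \to 1$ strictly in $M(A)$, whence $\Phi_I(u_\mu) i = \Phi_A(u_\mu) i \to i$ for every $i \in I$.

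The one step requiring genuine care --- and the main obstacle --- is showing that $\Phi_I(f)$ is \emph{central} in $M(I)$, rather than merely central among the restrictions of multipliers of $A$. Here I would argue directly that $\Phi_I(f)$ commutes with an arbitrary $n \in M(I)$. Writing $m = \Phi_A(f)$ and using $n(i) = \lim_\lambda n(e_\lambda) i$ with $n(e_\lambda) \in I \subseteq A$, I compute
\[
	m\bigl( n(i) \bigr) = \lim_\lambda \bigl( m\, n(e_\lambda) \bigr) i = \lim_\lambda \bigl( n(e_\lambda)\, m \bigr) i = \lim_\lambda n(e_\lambda)(m i) = n\bigl( m(i) \bigr),
\]
where the central equality uses centrality of $m$ in $M(A)$ together with $n(e_\lambda) \in A$, and the last step is the approximate-identity identity applied to $m i \in I$. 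The identical computation for right multiplication shows that the left- and right-multiplier parts of $\Phi_I(f)$ and $n$ commute, i.e. $\Phi_I(f)\, n = n\, \Phi_I(f)$ in $M(I)$. Hence $\Phi_I$ maps into $ZM(I)$, and $I$ is a $C_0(X)$-algebra whose action is precisely the restriction of that on $A$, as claimed.
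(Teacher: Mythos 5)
Your proof is correct, but it takes a genuinely different route from the paper's. The paper works entirely through the primitive ideal space: it invokes the characterization of $C_0(X)$-algebras as $C^*$-algebras equipped with a continuous map $\sigma_A : \Prim A \to X$, identifies $\Prim I$ with the open set $\{P \in \Prim A : I \not\subset P\}$ to get $\sigma_I = \sigma_A \circ \iota$, and then uses the Dauns--Hofmann identification $ZM(A) \cong C^b(\Prim A)$ to check invariance and to see that the resulting action is the restriction (invariance is verified by showing $\Phi_A(f)\cdot a$ dies in every primitive quotient $A/P$ with $I \subseteq P$). You instead argue directly in the multiplier algebra with an approximate identity: invariance via $\Phi_A(f)i = \lim(\Phi_A(f)e_\lambda)i \in \overline{AI} \subseteq I$, then the canonical restriction $*$-homomorphism $M(A) \to M(I)$, nondegeneracy from strict convergence of $\Phi_A(u_\mu)$, and --- the one genuinely delicate point --- a hands-on verification that the restricted multipliers land in $ZM(I)$ and not merely in the commutant of the restrictions of $M(A)$. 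That centrality step is exactly what the Dauns--Hofmann route gets for free (central multipliers are functions on $\Prim$, and $\Prim I$ embeds in $\Prim A$), and your computation $L_m L_n = L_n L_m$ via $n(i) = \lim_\lambda n(e_\lambda)i$ handles it correctly, including the right-multiplier half. The trade-off: your argument is more elementary and self-contained, while the paper's sets up the $\sigma_I = \sigma_A \circ \iota$ picture that it reuses immediately afterward to describe the fibers $\I_x$ of the associated bundle; your approach would need a small additional remark to recover that description.
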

\begin{proof}
	Recall that $A$ is a $C_0(X)$-algebra if and only if there is a continuous map $\sigma_A : \Prim A \to X$ \cite[Thm. C.26]{TFB2} and that 
	there is a homeomorphism $\iota$ of $\Prim I$ onto the open subspace $\{ P \in \Prim A : I \not\subset P\}$ of $\Prim A$ by 
	\cite[Prop. A.27]{TFB1}. Therefore, $\sigma_I = \sigma_A \circ \iota$ is continuous from $\Prim I$ to $X$, so $I$ 
	is a $C_0(X)$-algebra.

	Now recall that $ZM(A) \cong C^b(\Prim A)$ by the Dauns-Hofmann theorem, and under this identification we have 
	$\Phi_A(f) = f \circ \sigma_A$ for all $f \in C_0(X)$. Similarly, if $\Phi_I : C_0(X) \to ZM(I) \cong C^b(\Prim I)$ implements the 
	$C_0(X)$-action on $I$, then 
	\[
		\Phi_I(f) = f \circ \sigma_I = f \circ (\sigma_A \circ \iota) = \Phi_A(f) \circ \iota.
	\]
	If $a \in A$ and $P$ is any primitive ideal of $A$, we let $a(P)$ denote the image of $a$ in the primitive quotient $A/P$. Then we have
	\begin{equation}
	\label{eq:c0Xaction}
		 \bigl( \Phi_A(f) \cdot a \bigr)(P) = \Phi_A(f)(P) a(P)
	\end{equation}
	for all $a \in A$ and $P \in \Prim A$. Now suppose that $a \in I$, and let $P$ be any primitive ideal that contains $I$. Then $a(P) = 0$, 
	so for any $f \in C_0(X)$, \eqref{eq:c0Xaction} implies
	\[
		 \bigl( \Phi_A(f) \cdot a \bigr)(P) = 0,
	\]
	and it follows that $\Phi_A(f) \cdot a \in P$. This is true for any primitive ideal containing $I$, so $\Phi_A(f) \cdot a \in I$. Thus $I$ 
	is invariant under the $C_0(X)$-action on $A$. Finally, if we identify $\Prim I$ with $\{ P \in \Prim A : I \not\subset P\}$ via $\iota$, then 
	for any $P \in \Prim A$ not containing $I$ we have
	\[
		\bigl( \Phi_I(f) \cdot a \bigr)(P) = \Phi_I(f)(P) a(P) = \Phi_A(f)(P) a(P),
	\]
	so the action of $C_0(X)$ on $I$ is simply the restriction of the action on $A$.
\end{proof}

We have now pinned down two of the characterizations of $C_0(X)$-algebras for ideals. However, we do not yet have a description of 
the upper semicontinuous $C^*$-bundle associated to $I$. The following is based on \cite[Prop. 3.3]{dana-marius}.

\begin{prop}
	Let $A$ be a $C_0(X)$-algebra with associated upper semicontinuous bundle $p : \A \to X$, and let $I$ be an ideal in $A$. For each 
	$x \in X$, let $q_x : A \to \A_x = A/J_x$ denote the quotient map onto the fiber over $x$, and define
	\[
		\I_x = q_x(I) = (I+J_x)/J_x.
	\]
	Then
	\[
		\I = \coprod_{x \in X} \I_x \quad \text{and} \quad p \vert_{\I} : \I \to X
	\]
	define an upper semicontinuous bundle over $X$ with $\Gamma_0(X, \I) \cong I$.
\end{prop}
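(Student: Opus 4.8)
The plan is to leverage Proposition \ref{prop:C0Xideal}: since $I$ is itself a $C_0(X)$-algebra under the restricted action of $C_0(X)$, it already carries an associated upper semicontinuous bundle whose fiber over $x$ is $I/(C_{0,x}(X)\cdot I)$ and whose section algebra is canonically $I$. Thus the real content is to identify this abstract fiber with the concrete quotient $\I_x = q_x(I) = (I+J_x)/J_x$ and to realize the resulting bundle as the subset $\coprod_x \I_x$ of $\A$. Throughout, recall that $J_x = C_{0,x}(X)\cdot A = \ker q_x$.

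The heart of the argument is the identity
\[
	C_{0,x}(X)\cdot I = I \cap J_x.
\]
The inclusion $\subseteq$ is immediate: by Proposition \ref{prop:C0Xideal} the ideal $I$ is invariant under $\Phi_A(C_0(X))$, so $\Phi_A(C_{0,x}(X))I \subseteq I$, while $\Phi_A(C_{0,x}(X))I \subseteq \Phi_A(C_{0,x}(X))A \subseteq J_x$; passing to closures gives the containment since $I \cap J_x$ is closed. For $\supseteq$, I would fix $a \in I \cap J_x$ and let $\{e_\lambda\}$ be an approximate identity for $C_{0,x}(X)$. The action of $C_{0,x}(X)$ on $J_x = \overline{\Phi_A(C_{0,x}(X))A}$ is nondegenerate, so $\Phi_A(e_\lambda)a \to a$; each $\Phi_A(e_\lambda)a$ lies in $\Phi_A(C_{0,x}(X))I \subseteq C_{0,x}(X)\cdot I$ by invariance, and since the latter is closed we conclude $a \in C_{0,x}(X)\cdot I$. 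Consequently the fiber of $I$ over $x$ is $I/(C_{0,x}(X)\cdot I) = I/(I\cap J_x)$, and the inclusion $I \hookrightarrow A$ induces an injective $*$-homomorphism $I/(I\cap J_x) \to \A_x$ with image $(I+J_x)/J_x = \I_x$. Being injective it is isometric, so $\I_x$ is a closed ideal of $\A_x$ that is isometrically isomorphic to the fiber of $I$ over $x$.

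It remains to produce the bundle topology and identify the sections. I would equip $\I = \coprod_x \I_x$ with the subspace topology inherited from $\A$. For each $a \in I$ the section $x \mapsto q_x(a)$ is the restriction to $\I$ of the continuous section of $\A$ determined by $a \in A$, hence continuous; and since $\I_x \subseteq \A_x$ isometrically, the function $x \mapsto \norm{q_x(a)}_{\I_x} = \norm{a(x)}_{\A_x}$ is upper semicontinuous because $\A$ is an upper semicontinuous bundle. As $\{ q_x(a) : a \in I\} = \I_x$ for every $x$, the sections $\{ x \mapsto q_x(a) : a \in I\}$ satisfy the hypotheses of the bundle construction in \cite[Appendix C]{TFB2}, so $p\vert_{\I} : \I \to X$ is an upper semicontinuous $C^*$-bundle and $a \mapsto (x \mapsto q_x(a))$ is an isomorphism of $I$ onto $\Gamma_0(X, \I)$.

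The main obstacle is the fiber identity $C_{0,x}(X)\cdot I = I \cap J_x$: the nontrivial inclusion requires the nondegeneracy of the $C_{0,x}(X)$-action on $J_x$ together with the invariance of $I$ supplied by Proposition \ref{prop:C0Xideal}, and it is exactly this identity that forces $I/(I\cap J_x) \cong (I+J_x)/J_x$ to sit isometrically inside $\A_x$ as a genuine closed fiber. Once it is established, upper semicontinuity of the fiberwise norm, and hence the entire bundle structure, are inherited from $\A$ with no further work.
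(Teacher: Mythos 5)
Your proof is correct, but it takes a genuinely different route from the paper's. The paper treats $\I$ as a subset of $\A$ with the relative topology and reduces everything to a single point: the openness of $p\vert_{\I}$, which it verifies with the net criterion of \cite[Prop.\ 1.15]{TFB2} after Cohen-factoring $c \in \I_x$ as $c = ab$ with $b \in \I_x$, lifting $a$ through the open map $p$ and $b$ through a section $\sigma \in I$. You instead work on the algebraic side first, proving $C_{0,x}(X)\cdot I = I \cap J_x$ via an approximate identity for $C_{0,x}(X)$ acting nondegenerately on $J_x$, so that the fiber of $I$ (as a $C_0(X)$-algebra, via Proposition \ref{prop:C0Xideal}) is literally $\I_x = (I+J_x)/J_x$ sitting isometrically inside $\A_x$; you then let the section-generating machinery of \cite[App.\ C]{TFB2} manufacture the bundle topology. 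Your route buys a cleaner conceptual statement---$\I$ is \emph{the} bundle of the $C_0(X)$-algebra $I$, with its fibers computed explicitly---and it makes the isomorphism $\Gamma_0(X,\I) \cong I$ automatic and isometric, a step the paper dispatches with ``it is then easy to see.'' What the paper's route buys is that the subspace topology on $\coprod_x \I_x \subseteq \A$ is itself a bundle topology, i.e.\ that $\I$ is a genuine subbundle of $\A$; your construction produces a (unique) bundle topology making the sections $x \mapsto q_x(a)$ continuous, and one extra line---comparing basic neighborhoods built from sections in $I$ with those built from sections in $A$, using upper semicontinuity of $y \mapsto \norm{f(y)-g(y)}$---is needed to see that it coincides with the relative topology, which the paper later uses when it embeds $\Gamma_0(X,\I)$ into $\Gamma_0(X,\A)$ as an ideal. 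For the proposition as stated, your argument is complete.
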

\begin{proof}
	Since each $\I_x$ is an ideal in $\A_x$, $\I$ clearly includes into $\A$. The only thing that really needs to be checked is the openness
	of $p \vert_{\I}$. To do this, we'll use \cite[Prop. 1.15]{TFB2}. Let $\{x_i\}$ be a net in $X$ converging to $x \in X$, and fix $c \in \I_x$. 
	Use Cohen Factorization to write $c = ab$ for $a \in \A_x$ and $b \in \I_x$. Since $p : \A \to X$ is open, we can pass to a subnet, relabel, 
	and find a net $\{a_i\}$ in $\A$ such that $p(a_i) = x_i$ for all $i$ and $a_i \to a$. Now choose $\sigma \in I$ with $\sigma(x) = b$, and 
	put $b_i = \sigma(x_i)$ for all $i$. Then $a_i b_i \in \I_{x_i}$ for all $i$ and $a_i b_i \to a \cdot \sigma(x) = c$. It follows that 
	$p \vert_{\I}$ is open, and $p \vert_{\I} : \I \to X$ is an upper semicontinuous $C^*$-bundle.
	
	We have so far shown that $\I$ sits inside $\A$ as a subbundle. Since each fiber $\I_x$ is an ideal in $\A_x$, it is clear that 
	$\Gamma_0(X, \I)$ sits naturally inside $\Gamma_0(X, \A)$ as an ideal. It is then easy to see that $\Gamma_0(X, \I)$ can be identified 
	with $I$.
\end{proof}

The analogous results for quotients of $C_0(X)$-algebras are proven in a similar fashion. In fact, they are more or less taken care of in 
\cite[Lem. 1.3]{dana-marius}.

\begin{prop}[{\cite[Lem. 1.3]{dana-marius}}]
\label{prop:C0Xquotient}
	Let $A$ be a $C_0(X)$-algebra with the action implemented by the homomorphism $\Phi_A : C_0(X) \to ZM(A)$. If $I$ is an ideal of $A$, 
	then $A/I$ is a $C_0(X)$-algebra, and the action $\Phi_{A/I} : C_0(X) \to ZM(A/I)$ is characterized by
	\[
		\Phi_{A/I}(f) \cdot (a+I) = \bigl( \Phi_A(f) \cdot a \bigr) + I
	\]
	for all $f \in C_0(X)$ and $a \in A$. Moreover, if $\A/\I$ denotes the upper semicontinuous $C^*$-bundle associated to $A/I$, then 
	$(\A/\I)_x$ is isomorphic to $\A_x/\I_x$ for all $x \in X$.
\end{prop}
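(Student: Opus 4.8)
The plan is to follow the template of Proposition~\ref{prop:C0Xideal} and handle the three assertions in turn, exploiting the fact (established in that proposition) that $I$ is invariant under the $C_0(X)$-action on $A$, so that the action descends to the quotient. First I would show $A/I$ is a $C_0(X)$-algebra using the primitive ideal criterion \cite[Thm. C.26]{TFB2}. There is a homeomorphism $\kappa$ of $\Prim(A/I)$ onto the closed subset $\{P \in \Prim A : I \subseteq P\}$ of $\Prim A$, so composing with the continuous map $\sigma_A : \Prim A \to X$ gives a continuous map $\sigma_{A/I} = \sigma_A \circ \kappa : \Prim(A/I) \to X$. Hence $A/I$ is a $C_0(X)$-algebra, and we let $\Phi_{A/I}$ denote its action.

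Next, to verify the formula for $\Phi_{A/I}$, I would appeal to Dauns-Hofmann exactly as in the proof of Proposition~\ref{prop:C0Xideal}. Under the identifications $ZM(A) \cong C^b(\Prim A)$ and $ZM(A/I) \cong C^b(\Prim(A/I))$ we have $\Phi_A(f) = f \circ \sigma_A$ and $\Phi_{A/I}(f) = f \circ \sigma_{A/I} = f \circ \sigma_A \circ \kappa$. The map $a + I \mapsto (\Phi_A(f) \cdot a) + I$ is a well-defined central multiplier of $A/I$ precisely because $I$ is $\Phi_A$-invariant. To see it coincides with $\Phi_{A/I}(f)$, I would evaluate both at an arbitrary $\bar P \in \Prim(A/I)$, identifying $(A/I)/\bar P \cong A/P$ for $P = \kappa(\bar P)$ (which makes sense since $I \subseteq P$). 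Using $\Phi_{A/I}(f)(\bar P) = f(\sigma_A(P)) = \Phi_A(f)(P)$ and $(a+I)(\bar P) = a(P)$, both sides evaluate to $\Phi_A(f)(P)\,a(P)$; since $\bar P$ was arbitrary, the two multipliers agree.

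Finally, for the fiber identification I would compute the fiber ideal of $A/I$ over $x$ explicitly. Writing $\pi : A \to A/I$ for the quotient map and $J_x \subseteq A$ for the fiber ideal with $\A_x = A/J_x$, the fiber ideal of $A/I$ over $x$ is $J_x^{A/I} = \overline{\Phi_{A/I}(C_{0,x}(X)) \cdot (A/I)}$. The formula from the previous step gives $\Phi_{A/I}(C_{0,x}(X)) \cdot (A/I) = \pi\bigl(\Phi_A(C_{0,x}(X)) \cdot A\bigr)$, and since $\overline{\pi(S)} = \overline{\pi(\overline S)}$ for the continuous map $\pi$, this yields $J_x^{A/I} = \overline{\pi(J_x)} = \overline{(J_x + I)/I}$. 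The hard part will be recognizing that this ideal is already closed and equals $(J_x + I)/I$; this rests on the standard $C^*$-fact that the sum of two closed ideals is again a closed ideal, so that $J_x + I$ is closed and hence $(J_x + I)/I$ is closed in $A/I$.

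Granting this, the third isomorphism theorem gives
\[
	(\A/\I)_x = (A/I)/J_x^{A/I} = (A/I)/\bigl((J_x + I)/I\bigr) \cong A/(J_x + I),
\]
while $\I_x = (I + J_x)/J_x$ gives
\[
	\A_x/\I_x = (A/J_x)/\bigl((I + J_x)/J_x\bigr) \cong A/(I + J_x).
\]
Since both quotients equal $A/(I + J_x)$, the desired isomorphism $(\A/\I)_x \cong \A_x/\I_x$ follows.
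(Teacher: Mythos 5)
Your proof is correct. Note that the paper itself offers no proof of Proposition~\ref{prop:C0Xquotient}: it defers to the cited reference and remarks only that the quotient case is ``proven in a similar fashion'' to Proposition~\ref{prop:C0Xideal}. Your argument is exactly that parallel made explicit --- the homeomorphism of $\Prim(A/I)$ onto the closed set $\{P : I \subseteq P\}$ plays the role that the open set $\{P : I \not\subset P\}$ plays in the ideal case, and the Dauns--Hofmann evaluation at primitive ideals identifies the descended action with the canonical one, using the $\Phi_A$-invariance of $I$ from Proposition~\ref{prop:C0Xideal} for well-definedness. Your fiber computation is also sound: the key points (that $J_x + I$ is closed because the sum of two closed ideals in a $C^*$-algebra is closed, and hence that $J_x^{A/I} = (J_x+I)/I$ without any further closure) are correctly isolated, and the two applications of the third isomorphism theorem land both $(\A/\I)_x$ and $\A_x/\I_x$ on $A/(J_x+I)$. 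This is a complete substitute for the external citation.
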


\section{Tensor Product Systems}
Let us return now to the situation where $(\A, G, \alpha)$ is a separable groupoid dynamical system and $B$ is a separable $C^*$-algebra.
The results of Section 3.1 guarantee that $A \omax B$ and $A \osig B$ are $C_0(X)$-algebras whenever $A$ is, and that the fibers of 
$\A \omax B$ and $\A \osig B$ are easy to understand. This will make it relatively straightforward to define actions of $G$ on $\A \omax B$ and 
$\A \osig B$. For $\A \omax B$, this amounts to defining a family $\alpha \otimes \id = \{\alpha_\gamma \omax \id \}_{\gamma \in G}$ of fiberwise 
isomorphisms. Under the identification $(\A \omax B)_u = \A_u \omax B$, we have
\[
	\alpha_\gamma \omax \id : \A_{s(\gamma)} \omax B \to \A_{r(\gamma)} \omax B,
\]
which is an isomorphism by \cite[Prop. B.30]{TFB1}. It is then be clear that
\[
	\alpha_{\gamma \eta} \omax \id = (\alpha_\gamma \circ \alpha_\eta) \omax \id = (\alpha_\gamma \omax \id) \circ 
		(\alpha_\eta \omax \id)
\]
whenever $(\gamma, \eta) \in G^{(2)}$. Similarly, if we assume that $A$ is exact then Proposition \ref{prop:minimalfiber} implies that $(\A 
\osig B)_u$ is identified with $\A_u \osig B$, and we get an isomorphism
\[
	\alpha_\gamma \otimes \id : \A_{s(\gamma)} \osig B \to \A_{r(\gamma)} \osig B
\]
by \cite[Prop. B.13]{TFB1}. It is again straightforward to see that 
\[
	\alpha_{\gamma \eta} \otimes \id = (\alpha_\gamma \otimes \id) \circ (\alpha_\eta \otimes \id)
\]
whenever $(\gamma, \eta) \in G^{(2)}$. Therefore, $G$ acts on both $\A \omax B$ and $\A \osig B$ in the obvious way, at least in a purely 
algebraic sense. It remains to check that $\gamma \cdot t = (\alpha_\gamma \omax \id)(t)$ defines a continuous action of $G$ on $\A \omax B$, 
and similarly for the action $\gamma \cdot t = (\alpha_\gamma \otimes \id)(t)$ on $\A \osig B$. With a little work, this follows from the fact that 
the action $\alpha$ of $G$ on $\A$ is continuous. Indeed, the proofs of the following theorems are restrictions of the one found on page 919 
of \cite{KMRW} (as noted in \cite[\S 4]{BG}).

\begin{thm}
\label{thm:tensoraction}
	Let $(\A, G, \alpha)$ be a separable groupoid dynamical system, and let $B$ be a separable $C^*$-algebra.
	\begin{enumerate}
		\item The assignment
	\[
		\gamma \cdot c = (\alpha_\gamma \omax \id)(c)
	\]
	defines a continuous action of $G$ on $\A \omax {B}$, so $(\A \omax B, G, \alpha \omax \id)$ is a separable groupoid dynamical system.
	\item If $A$ is exact, then
	\[
		\gamma \cdot c = (\alpha_\gamma \otimes \id)(c)
	\]
	defines a continuous action of $G$ on $\A \osig B$, and $(\A \osig B, G, \alpha \otimes \id)$ is a separable groupoid dynamical system.
	\end{enumerate}
\end{thm}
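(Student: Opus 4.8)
The plan is to observe that conditions (a) and (b) in the definition of an action have already been verified in the discussion preceding the theorem, so the entire content of both parts is (i) the easy remark that $A \omax B$ and $A \osig B$ are separable, and (ii) the continuity of the map $(\gamma, c) \mapsto \gamma \cdot c$ from $G * (\A \omax B)$ to $\A \omax B$ (respectively from $G * (\A \osig B)$ to $\A \osig B$). Separability is immediate: a countable dense subset of the algebraic tensor product $A \odot B$ is manufactured from countable dense subsets of $A$ and $B$, and $A \odot B$ is dense in either completion, while $G$ is second countable by hypothesis. So the real work is continuity.

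For continuity I would work with nets and the standard description of the topology on an upper semicontinuous bundle, which is detected by continuous sections: $d_i \to d$ precisely when $p(d_i) \to p(d)$ and, for every $\varepsilon > 0$, there is a continuous section $\Xi$ with $\|\Xi(p(d)) - d\| < \varepsilon$ and $\|d_i - \Xi(p(d_i))\| < \varepsilon$ eventually. (This robust form follows from the basic-neighborhood description of the topology together with upper semicontinuity of $b \mapsto \|b - \Xi(p(b))\|$.) Fix a convergent net $(\gamma_i, c_i) \to (\gamma, c)$ in $G * (\A \omax B)$, write $u_i = s(\gamma_i) = p(c_i)$ and $v_i = r(\gamma_i)$, and set $d_i = (\alpha_{\gamma_i} \omax \id)(c_i)$ and $d = (\alpha_\gamma \omax \id)(c)$. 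Continuity of $r$ gives $p(d_i) = v_i \to v = p(d)$, so only the section estimate remains.

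The key reduction is to elementary tensors. Given $\varepsilon > 0$ I would choose, using Proposition~\ref{prop:tensorILT} with $\tau = \id_{\go}$ (and surjectivity of evaluation onto fibers), a section $F = \sum_{k=1}^n f_k \ohat b_k$ of $\A \omax B$ with $\|F(u) - c\| < \varepsilon$. Since each $f_k$ is a continuous section and $u_i \to u$, we have $f_k(u_i) \to f_k(u)$ in $\A$, and because $s(\gamma_i) = u_i = p(f_k(u_i))$, the continuity of the \emph{original} action $\alpha$ on $\A$ yields $\alpha_{\gamma_i}(f_k(u_i)) \to \alpha_\gamma(f_k(u))$ in $\A$ for each $k$. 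A short lemma---that $(a_1, \dots, a_n) \mapsto \sum_k a_k \otimes b_k$ is continuous from the fibered product of $n$ copies of $\A$ into $\A \omax B$, proved by the single-tensor case using that the elementary-tensor norm factors as $\|a \otimes b\| = \|a\|\,\|b\|$ and that bundle addition is continuous---then promotes this to $e_i := (\alpha_{\gamma_i} \omax \id)(F(u_i)) \to e := (\alpha_\gamma \omax \id)(F(u))$ in $\A \omax B$. Finally, since $\alpha_{\gamma_i} \omax \id$ is an isometric isomorphism, $\|d_i - e_i\| = \|c_i - F(u_i)\|$, and upper semicontinuity gives $\limsup_i \|c_i - F(u_i)\| \le \|c - F(u)\| < \varepsilon$, while $\|e - d\| = \|F(u) - c\| < \varepsilon$. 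Feeding a section through $e$ into the robust convergence criterion and applying the triangle inequality shows $d_i \to d$; letting $\varepsilon \to 0$ completes part (a).

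For part (b) the argument is identical once $(\A \osig B)_u$ is identified with $\A_u \osig B$; this is exactly where exactness of $A$ enters, via Proposition~\ref{prop:minimalfiber}, and it also guarantees that $\alpha_\gamma \otimes \id$ is a genuine (hence isometric) isomorphism by \cite[Prop. B.13]{TFB1}. Since the minimal elementary-tensor norm also satisfies $\|a \otimes b\| = \|a\|\,\|b\|$ and finite sums of elementary tensors remain dense, every step above transfers verbatim. I expect the main obstacle to lie in the third paragraph: pinning down the correct sufficient criterion for convergence in an upper semicontinuous bundle and verifying continuity of $(a_1, \dots, a_n) \mapsto \sum_k a_k \otimes b_k$. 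Once those are in place, the continuity of $\alpha$ on $\A$ and the isometry of the fiber maps do the rest.
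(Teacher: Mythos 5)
Your argument is correct and is essentially the argument the paper intends: the paper's own proof only records that exactness of $A$ (via Proposition \ref{prop:minimalfiber}) identifies the fibers of $\A \osig B$ and makes each $\alpha_\gamma \otimes \id$ an isomorphism, and it defers the continuity of both actions entirely to the argument on page 919 of \cite{KMRW}. Your third paragraph supplies exactly the details that citation stands in for---approximating $c$ by $F(u)$ with $F = \sum_k f_k \ohat b_k$, applying the continuity of $\alpha$ to $f_k(u_i)$, using the cross-norm property and the isometry of the fiber isomorphisms, and invoking the section criterion for convergence together with upper semicontinuity of the norm---so nothing is missing, and the transfer to the minimal tensor product in part (2) is handled just as the paper does.
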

\begin{proof}
	If $A$ is exact, then Proposition \ref{prop:minimalfiber} guarantees that $(\A \osig B)_u \cong \A_u \osig B$ for all 
	$u \in \go$, and hence that $\alpha_\gamma \otimes \id$ is an isomorphism for all $\gamma \in G$. The continuity of the action 
	is then proven in exactly the same way as for $\A \omax B$. 
\end{proof}
 
We will end this section with a generalization of \cite[Lem. 2.75]{TFB2}. Recall that Proposition \ref{prop:tensorILT} guarantees there is a 
natural embedding of $\Gamma_c(G, r^*\A) \odot B$ into $\Gamma_c(G, r^*(\A \omax B))$, and that the image is dense with respect to the 
inductive limit topology. We now show that this embedding extends to an isomorphism of $(\A \rtimes_\alpha G) \omax B$ onto 
$(\A \omax B) \rtimes_{\alpha \otimes \id} G$.

\begin{thm}
\label{thm:exchange}
	Let $(\A, G, \alpha)$ be a separable dynamical system, and suppose $B$ is a separable $C^*$-algebra. There is a natural isomorphism
	\[
		\Phi : (\A \rtimes_\alpha G) \omax B \to (\A \omax B) \rtimes_{\alpha \otimes \id} G,
	\]
	given on elementary tensors by
	\[
		\Phi(f \otimes b) = f \ohat b,
	\]
	for $f \in \Gamma_c(G, r^*\A)$ and $b \in B$.
\end{thm}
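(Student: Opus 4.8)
The plan is to show that $\Phi$, which is manifestly a $*$-homomorphism on the dense $*$-subalgebra $\Gamma_c(G, r^*\A) \odot B$ of $(\A\rtimes_\alpha G)\omax B$, extends to an isometric isomorphism. The algebraic verification is a direct computation: using the product and involution on $\Gamma_c$ together with $(\alpha_\gamma\omax\id)(x\otimes b) = \alpha_\gamma(x)\otimes b$, one checks that $(f\ohat b)*(g\ohat c) = (f*g)\ohat(bc)$ and $(f\ohat b)^* = f^*\ohat b^*$, so that $\Phi(f\otimes b) = f\ohat b$ is multiplicative and $*$-preserving. Granting boundedness, $\Phi$ extends to a $*$-homomorphism on $(\A\rtimes_\alpha G)\omax B$, and by Proposition \ref{prop:tensorILT} its range contains the inductive-limit-dense (hence norm-dense) subspace $\spa\{f\ohat b\}$ of $(\A\omax B)\rtimes_{\alpha\otimes\id} G$. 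Thus once $\Phi$ is shown to be isometric, its range is closed and dense, so $\Phi$ is surjective and therefore an isomorphism. The whole theorem then reduces to the norm identity $\|\Phi(t)\| = \|t\|$ for $t \in \Gamma_c(G, r^*\A)\odot B$.

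To establish this identity I would set up a bijection between covariant representations of the tensor system $(\A\omax B, G, \alpha\otimes\id)$ and triples $(\pi, U, \sigma)$, where $(\pi, U)$ is a covariant representation of $(\A, G, \alpha)$ and $\sigma$ is a decomposable representation of $B$ on the same Borel Hilbert bundle whose fibers $\sigma_u$ commute with $\pi_u$ and satisfy the intertwining relation $U_\gamma\sigma_{s(\gamma)}(b) = \sigma_{r(\gamma)}(b)U_\gamma$ for $\nu$-a.e. $\gamma$. In the forward direction, a covariant pair $(\Pi, U)$ for the tensor system has $\Pi$ a $C_0(\go)$-linear representation of $A\omax B$; by the universal property of the maximal tensor product, $\Pi = \pi\omax\sigma$ for commuting representations $\pi$ of $A$ and $\sigma$ of $B$, where $\pi$ is $C_0(\go)$-linear by Proposition \ref{prop:linrep} and $\sigma$ is decomposable by Proposition \ref{prop:linrep2}. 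Decomposing fiberwise via \eqref{eq:tensordecomp} gives $\Pi_u = \pi_u\omax\sigma_u$, and feeding $t = a\otimes b$ into the covariance relation for $\alpha\otimes\id$, then letting $b$ and $a$ run through approximate units, splits it into covariance of $(\pi, U)$ and the intertwining relation for $\sigma$. A short computation with the integrated-form formula then shows $\Pi\rtimes U(\Phi(t)) = \bigl((\pi\rtimes U)\omax\sigma\bigr)(t)$ for all $t\in\Gamma_c(G,r^*\A)\odot B$.

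Taking suprema over covariant representations of the tensor system, the relation $\Pi\rtimes U\circ\Phi = (\pi\rtimes U)\omax\sigma$ immediately gives $\|\Phi(t)\| \le \|t\|$, since each $(\pi\rtimes U)\omax\sigma$ is a genuine representation of the maximal tensor product $(\A\rtimes_\alpha G)\omax B$. The reverse inequality requires the converse construction: starting from an arbitrary pair of commuting representations $\rho$ of $\A\rtimes_\alpha G$ and $\sigma$ of $B$, disintegrate $\rho = \pi\rtimes U$ by Renault's theorem and build the covariant representation $(\pi\omax\sigma, U)$ of the tensor system. For this one must verify that $\sigma$ is decomposable with fibers commuting with $\pi_u$ — which follows because $\sigma(B)$ commutes with $\bar\rho(M(\A\rtimes_\alpha G))$, and the latter contains $\pi(A)$ together with the diagonal operators coming from the copy of $C_0(\go)$ in $M(\A\rtimes_\alpha G)$ — and, crucially, that $\sigma$ satisfies the a.e.\ intertwining relation $U_\gamma\sigma_{s(\gamma)} = \sigma_{r(\gamma)}U_\gamma$.

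I expect this last point to be the main obstacle. Extracting the intertwining relation from the bare commutation $\sigma(B)\subseteq\rho(\A\rtimes_\alpha G)'$ amounts to showing that the identity $\int_{G^u}\pi_u(f(\gamma))\bigl[\sigma_{r(\gamma)}(b)U_\gamma - U_\gamma\sigma_{s(\gamma)}(b)\bigr]h(s(\gamma))\modu\,d\lambda^u(\gamma) = 0$, valid for all $f$ and $h$, forces the bracketed Borel field to vanish $\nu$-a.e.; this is a localization and uniqueness argument of the same flavor as the uniqueness half of Renault's Disintegration Theorem, and it is where the measure-theoretic care resides. Once it is in hand, the correspondence is a genuine bijection, the two suprema agree, and $\|\Phi(t)\| = \|t\|$, completing the proof.
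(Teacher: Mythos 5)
Your proposal follows essentially the same route as the paper: disintegrate via Renault's theorem and set up a correspondence between covariant representations of $(\A \omax B, G, \alpha\otimes\id)$ and covariant representations of $(\A, G, \alpha)$ paired with a commuting, decomposable representation of $B$ satisfying the a.e.\ intertwining relation. Your forward direction is precisely the content of the paper's Lemmas \ref{lem:exchange1} and \ref{lem:exchange2} (decompose via Propositions \ref{prop:linrep} and \ref{prop:linrep2}, then use nondegeneracy of the fibers $\pi_{B,u}$ off a null set to split the covariance identity), and your computation $\Pi\rtimes U(\Phi(t)) = \bigl((\pi\rtimes U)\omax\sigma\bigr)(t)$ is exactly the paper's range comparison. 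The only cosmetic difference is that the paper phrases the conclusion as equality of ranges of two faithful representations and sets $\Phi = \rho^{-1}\circ L$, rather than as the norm identity $\norm{\Phi(t)} = \norm{t}$ on $\Gamma_c(G,r^*\A)\odot B$; these are the same argument.

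The step you flag as the main obstacle --- extracting the $\nu$-a.e.\ relation $V_\gamma\,\rho_{B,s(\gamma)}(b) = \rho_{B,r(\gamma)}(b)\,V_\gamma$ from the mere commutation of $\rho_B(B)$ with $(\rho_A\rtimes V)(\A\rtimes_\alpha G)$ --- is genuinely required in the reverse direction, since without it the pair $(\rho_A\omax\rho_B, V)$ is not known to be covariant and $(\rho_A\omax\rho_B)\rtimes V$ is not defined. Be aware that the paper does not carry this out either: it verifies only that $\rho_A$ and $\rho_B$ commute and then asserts covariance of the tensor pair, so you are not missing an ingredient that the paper supplies. Your observation that $\rho_B(B)$ commutes with the image of $M(\A\rtimes_\alpha G)$, hence with the diagonal operators and with $\pi_A(A)$, is in fact a cleaner justification of decomposability and fiberwise commutation than the paper gives. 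To make the argument complete you would still need to execute the localization/uniqueness argument you sketch (a disintegration-style a.e.\ statement, using separability to control the null sets as $b$ ranges over a countable dense subset of $B$); as written, that step remains a sketch in your proposal just as it does in the paper.
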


We will give a proof based on the first part of Green's proof of \cite[Prop. 14]{green}. The calculations are naturally more complicated 
when dealing with groupoids, so we will prove two lemmas first, and then tackle the proof of the main theorem.

\begin{lem}
\label{lem:exchange1}
	Let $(\A, G, \alpha)$ be a separable groupoid dynamical system, and let $B$ be a separable $C^*$-algebra. Suppose 
	$(\pi, U)$ is a covariant representation of $(\A \omax B, G, \alpha \otimes \id)$, and write $\pi = \pi_A \omax \pi_B$, where
	$\pi_A$ and $\pi_B$ are representations of $A$ and $B$, respectively. Then $(\pi_A, U)$ is a covariant representation of $(\A, G, \alpha)$.
\end{lem}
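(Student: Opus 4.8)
The plan is to verify the three requirements for $(\pi_A, U)$ to be a covariant representation of $(\A, G, \alpha)$: that $\pi_A$ is a $C_0(\go)$-linear representation on the same Borel Hilbert bundle $\go * \frah$ with measure $\mu$, that $U$ is a unitary representation of $G$ (which is immediate, since $U$ is already part of the datum $(\pi, U)$), and that the covariance identity holds off a $\nu$-null set. Since $\pi = \pi_A \omax \pi_B$ is $C_0(\go)$-linear, Proposition \ref{prop:linrep} gives at once that $\pi_A$ is $C_0(\go)$-linear, so $\pi_A$ decomposes as $\int_{\go}^\oplus \pi_{A,u} \, d\mu(u)$ into representations of the fibers $\A_u$. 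Proposition \ref{prop:linrep2} then supplies the crucial structural input: $\pi_B$ is decomposable, and after discarding a $\mu$-null set we may arrange that $\pi_u = \pi_{A,u} \omax \pi_{B,u}$ with each $\pi_{B,u}$ nondegenerate. This identifies the fiber representations of $\pi$ on the elementary tensors that appear in the covariance relation.

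With these decompositions in hand, I would specialize the covariance relation for $(\pi, U)$ to elementary tensors. For $a \in \A_{s(\gamma)}$, $b \in B$, and $\gamma$ outside the exceptional null set, applying $U_\gamma \pi_{s(\gamma)}(c) = \pi_{r(\gamma)}\bigl( (\alpha_\gamma \otimes \id)(c) \bigr) U_\gamma$ to $c = a \otimes b$ and using $\pi_u = \pi_{A,u} \omax \pi_{B,u}$ together with $(\alpha_\gamma \otimes \id)(a \otimes b) = \alpha_\gamma(a) \otimes b$ gives
\[
	U_\gamma \pi_{A, s(\gamma)}(a) \pi_{B, s(\gamma)}(b) = \pi_{A, r(\gamma)}(\alpha_\gamma(a)) \pi_{B, r(\gamma)}(b) U_\gamma.
\]
To eliminate the $\pi_B$ factors I would let $b$ run through an approximate identity $\{e_\lambda\}$ for $B$. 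Because $\pi_{B,s(\gamma)}$ and $\pi_{B,r(\gamma)}$ are nondegenerate, $\pi_{B,s(\gamma)}(e_\lambda) \to I$ and $\pi_{B,r(\gamma)}(e_\lambda) \to I$ strongly; since $U_\gamma$ and the $\pi_A$-operators are bounded, both sides converge strongly and yield $U_\gamma \pi_{A,s(\gamma)}(a) = \pi_{A,r(\gamma)}(\alpha_\gamma(a)) U_\gamma$, which is precisely the covariance identity for $(\pi_A, U)$.

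The one genuinely technical point — and the step I expect to require the most care — is the bookkeeping of null sets, since the decomposition $\pi_u = \pi_{A,u} \omax \pi_{B,u}$ and the nondegeneracy of $\pi_{B,u}$ hold only for $\mu$-almost every $u \in \go$, whereas the covariance relation holds only off a $\nu$-null set $N \subset G$. Writing $M \subset \go$ for the $\mu$-null set on which the fiber decomposition or the nondegeneracy fails, I would show that both $r^{-1}(M)$ and $s^{-1}(M)$ are $\nu$-null. For $r^{-1}(M)$ this is immediate from $\nu = \mu \circ \lambda$, since $\lambda^u$ is concentrated on $G^u = r^{-1}(u)$, so the integrand $\lambda^u(r^{-1}(M))$ vanishes for $u \notin M$; for $s^{-1}(M)$ it follows after passing through the inversion map, using $(s^{-1}(M))^{-1} = r^{-1}(M)$ and the quasi-invariance $\nu \sim \nu^{-1}$. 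Replacing $N$ by $N \cup r^{-1}(M) \cup s^{-1}(M)$, which is still $\nu$-null, guarantees that for every $\gamma$ outside this enlarged set the elementary-tensor computation above is valid at both $s(\gamma)$ and $r(\gamma)$. This produces the required $\nu$-null exceptional set and completes the verification that $(\pi_A, U)$ is covariant.
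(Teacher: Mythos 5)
Your proposal is correct and follows essentially the same route as the paper: invoke Propositions \ref{prop:linrep} and \ref{prop:linrep2} to decompose $\pi_A$ and $\pi_B$ fiberwise, specialize the covariance relation to elementary tensors, show $r^{-1}(M)$ and $s^{-1}(M)$ are $\nu$-null via $\nu = \mu\circ\lambda$ and quasi-invariance, and cancel the $\pi_B$ factors using nondegeneracy. The only difference is that you spell out the cancellation with an approximate identity and strong convergence, a detail the paper leaves implicit.
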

\begin{proof}
	Let $\mu$ and $\go * \frah$ denote the quasi-invariant measure and Borel Hilbert bundle associated to $(\pi, U)$. Since $\pi_A \omax 
	\pi_B$ is a $C_0(\go)$-linear representation of $A \omax B$, we know from Proposition~\ref{prop:linrep} that $\pi_A$ 
	is $C_0(\go)$-linear, hence decomposable, and from Proposition~\ref{prop:linrep2} that $\pi_B$ is decomposable. Furthermore, 
	$(\pi_A \omax \pi_B)_u = \pi_{A, u} \omax \pi_{B, u}$ is a decomposition of $\pi_A \omax \pi_B$. Let $\nu = \mu \circ \lambda$ denote 
	the measure on $G$ induced from $\mu$ via the Haar system. The covariance of $(\pi, U)$ means that there is a $\nu$-null set 
	$N \subset G$ such that for all $\gamma \not\in N$,
	\begin{equation}
	\label{eq:covariant}
		U_\gamma \pi_{A, s(\gamma)} \bigl( a(s(\gamma)) \bigr) \pi_{B, s(\gamma)}(b) = \pi_{A, r(\gamma)} \bigl(\alpha_\gamma \bigl( 
		a(s(\gamma)) \bigr) \bigr) \pi_{B,r(\gamma)}(b) U_\gamma.
	\end{equation}
	for $a \in A$ and $b \in B$. Recall that there is a $\mu$-null set $E \subset \go$ such that $\pi_{B,u}$ is nondegenerate for all $u \not\in E$, 
	and we can take $E$ to be Borel. Put $V = r^{-1}(E) \cup s^{-1}(E)$. We claim that $\nu(V)=0$. Since $\chi_{r^{-1}(E)}$ is a positive Borel 
	function, \cite[Prop. 3.109]{geoff}, implies that $u \mapsto \lambda^u \bigl( r^{-1}(E) \bigr)$ is Borel, and
	\[
		\nu \bigl( r^{-1}(E) \bigr) = \int_{\go} \lambda^u \bigl( r^{-1}(E) \bigr) \, d\mu(u).
	\]
	Observe that $\lambda^u \bigl( r^{-1}(E) \bigr)$ can only be nonzero when $u \in r \bigl(r^{-1}(E) \bigr)=E$. But $\mu(E)=0$, so 
	\eqref{eq:func} is $\mu$-a.e. zero, and it follows that $\nu \bigl( r^{-1}(E) \bigr) = 0$. Since $\mu$ is quasi-invariant, $\nu^{-1}(r^{-1}(E)) = 0$ 
	as well. But then $\nu(s^{-1}(E)) = \nu^{-1}(r^{-1}(E)) = 0$, and it follows that $\nu(V)=0$.
	
	Define $\tilde{N} = N \cup V$. Then $\tilde{N}$ is $\nu$-null, the equation (\ref{eq:covariant}) holds for all $\gamma \not\in \tilde{N}$, 
	and the representations $\pi_{B, s(\gamma)}$ and $\pi_{B, r(\gamma)}$ are nondegenerate for all $\gamma \not \in \tilde{N}$. 
	Consequently,
	\[
		U_\gamma \pi_{A, s(\gamma)} \bigl( a(s(\gamma)) \bigr) = \pi_{A, r(\gamma)} \bigl( \alpha_\gamma(a(s(\gamma)) \bigr) U_\gamma
	\]
	for all $\gamma \not \in \tilde{N}$, and $(\pi_A, U)$ is a covariant representation.
\end{proof}

\begin{lem}
\label{lem:exchange2}
	Let $(\A, G, \alpha)$ be a separable groupoid dynamical system, $B$ a separable $C^*$-algebra, and $(\pi, U)$ a covariant 
	representation of $(\A \omax B, G, \alpha \otimes \id)$. If $\nu$ denotes the measure on $G$ induced by $\mu$, 
	then for $\nu$-almost all $\gamma$, 
	\begin{equation}
	\label{eq:covariant3}
		U_\gamma \pi_{B, s(\gamma)}(b) = \pi_{B, r(\gamma)}(b) U_\gamma
	\end{equation}
	for all $b \in B$. As a result, $\pi_B$ commutes with the integrated form $\pi_A \rtimes U$.
\end{lem}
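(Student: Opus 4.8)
The plan is to build on Lemma~\ref{lem:exchange1}, which already records the covariance relation \eqref{eq:covariant} on a conull set and establishes that $(\pi_A, U)$ is itself covariant. I retain that proof's notation: $\mu$ and $\go * \frah$ are the measure and Borel Hilbert bundle of $(\pi, U)$, $\nu = \mu \circ \lambda$, and $\tilde{N}$ is the $\nu$-null set off of which \eqref{eq:covariant} holds and $\pi_{B, s(\gamma)}$, $\pi_{B, r(\gamma)}$, $\pi_{A, r(\gamma)}$ are all nondegenerate. First I would fix $\gamma \notin \tilde{N}$, let $a(s(\gamma))$ denote an arbitrary element of $\A_{s(\gamma)}$ (evaluation $A \to \A_{s(\gamma)}$ is surjective), and set $Q = \pi_{A, r(\gamma)}\bigl(\alpha_\gamma(a(s(\gamma)))\bigr)$. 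The covariance of $(\pi_A, U)$ gives $U_\gamma \pi_{A, s(\gamma)}(a(s(\gamma))) = Q U_\gamma$, so the left-hand side of \eqref{eq:covariant} becomes $Q U_\gamma \pi_{B, s(\gamma)}(b)$ while the right-hand side is $Q \pi_{B, r(\gamma)}(b) U_\gamma$. Hence
\[
	Q \bigl( U_\gamma \pi_{B, s(\gamma)}(b) - \pi_{B, r(\gamma)}(b) U_\gamma \bigr) = 0
\]
for every $b \in B$ and every such $Q$.

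The key step is to cancel $Q$. As $a$ ranges over $A$ and $\alpha_\gamma$ is an isomorphism of $\A_{s(\gamma)}$ onto $\A_{r(\gamma)}$, the elements $\alpha_\gamma(a(s(\gamma)))$ exhaust $\A_{r(\gamma)}$, so $Q$ runs over all of $\pi_{A, r(\gamma)}(\A_{r(\gamma)})$. Since $\pi_{A, r(\gamma)}$ is nondegenerate, the only vector annihilated by every such $Q$ is $0$; applying the displayed identity to an arbitrary vector therefore forces the fixed operator $U_\gamma \pi_{B, s(\gamma)}(b) - \pi_{B, r(\gamma)}(b) U_\gamma$ to vanish. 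This is exactly \eqref{eq:covariant3}, valid for all $\gamma \notin \tilde{N}$, i.e.\ $\nu$-almost everywhere.

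For the final assertion I would compare $\pi_B(b)$ with the integrated form on $\Hil = L^2(\go * \frah, \mu)$, using that $\pi_B(b) = \int_{\go}^\oplus \pi_{B,u}(b)\, d\mu(u)$ acts fiberwise. For $f \in \Gamma_c(G, r^*\A)$ and $h \in \Hil$, both $\pi_B(b)\bigl(\pi_A \rtimes U(f)\bigr)h$ and $\bigl(\pi_A \rtimes U(f)\bigr)\pi_B(b)h$ are, evaluated at $u$, integrals over $G^u$ whose integrands differ only in the placement of a factor $\pi_{B, r(\gamma)}(b)$. Since $u = r(\gamma)$ throughout $G^u$, and since $\pi_{A, r(\gamma)}$ and $\pi_{B, r(\gamma)}$ have commuting ranges (they are the tensor factors of $\pi_{A, r(\gamma)} \omax \pi_{B, r(\gamma)}$, as in Propositions~\ref{prop:linrep} and~\ref{prop:linrep2}), I can slide $\pi_{B, r(\gamma)}(b)$ past $\pi_{A, r(\gamma)}(f(\gamma))$ and then, by \eqref{eq:covariant3}, past $U_\gamma$, replacing it with $\pi_{B, s(\gamma)}(b)$. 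This matches the second integrand. The one piece of bookkeeping is measure-theoretic: \eqref{eq:covariant3} holds off the $\nu$-null set $\tilde{N}$, and because $\nu = \int_{\go} \lambda^u \, d\mu(u)$, we get $\lambda^u(\tilde{N}) = 0$ for $\mu$-almost every $u$, so the integrands agree $\lambda^u$-a.e.\ for $\mu$-a.e.\ $u$. The two integrals therefore coincide for $\mu$-a.e.\ $u$, and $\pi_B(b)$ commutes with $\pi_A \rtimes U(f)$.

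I expect the main obstacle to be the cancellation step: one must be careful to argue that $Q$ exhausts a nondegenerate family on $\Hil(r(\gamma))$ so that the fixed intertwining defect must vanish, and then to carry the single $\nu$-null exceptional set $\tilde{N}$ through to $\lambda^u$-null sets in the integrated-form computation. Both are routine once the surjectivity of evaluation and the disintegration $\nu = \int_{\go} \lambda^u \, d\mu(u)$ are invoked, but they are where the argument could go wrong if the exceptional sets are not handled uniformly.
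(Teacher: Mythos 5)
Your proposal is correct and follows essentially the same route as the paper: isolate a conull set where the covariance relation holds and the fiber representations are nondegenerate, cancel the $\pi_A$-factor by nondegeneracy to get \eqref{eq:covariant3}, and then push $\pi_{B,r(\gamma)}(b)$ through the integrated form using commuting ranges and \eqref{eq:covariant3}. The only differences are cosmetic: you spell out the cancellation step (which the paper leaves as ``it follows'') and you verify the commutation fiberwise over $G^u$ rather than via inner products integrated against $\nu$ as the paper does.
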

\begin{proof}
	As we have already seen, the covariance of $(\pi, U)$ means that there is a $\nu$-null set $N \subset G$ such that
	\eqref{eq:covariant} holds for all $\gamma \not\in N$, $a \in A$, and $b \in B$. Furthermore, the proof of Lemma \ref{lem:exchange1} 
	guarantees that there is a $\nu$-null $\tilde{N} \subset G$ such that for all $\gamma \not\in \tilde{N}$, \eqref{eq:covariant} holds and the 
	representations $\pi_{A, s(\gamma)}$ and $\pi_{A, r(\gamma)}$ are nondegenerate. It follows that for all $\gamma \not\in \tilde{N}$,
	\[
		U_\gamma \pi_{B, s(\gamma)}(b) = \pi_{B, r(\gamma)}(b) U_\gamma
	\]
	for all $b \in B$, and the first claim is proven.
	
	For the second assertion, we need to explicitly compute with the integrated form of $(\pi_A, U)$. Let $f \in \Gamma_c(G, r^*\A)$ and $b 
	\in B$. Then for any $h, k \in L^2(\go*\frah, \mu)$,
	\begin{align*}
		&\ip{\pi_A\rtimes U(f)\pi_B(b)h}{k} = \\
		 &  \quad \quad = \int_G \ip{\pi_{A,r(\gamma)}(f(\gamma)) U_\gamma \pi_{B, s(\gamma)}(b) h(s(\gamma))}{k(r(\gamma))} \Delta
		 (\gamma)^{-1/2} \, d\nu(\gamma) \\
		&  \quad \quad = \int_G \ip{\pi_{A,r(\gamma)}(f(\gamma)) \pi_{B,r(\gamma)}(b) U_\gamma h(s(\gamma))}{k(r(\gamma))} \Delta
		(\gamma)^{-1/2} \, d\nu(\gamma)
	\end{align*}
	by (\ref{eq:covariant3}). Now continuing and using the fact that $\pi_A$ and $\pi_B$ commute, we have
	\begin{align*}
		&= \int_G \ip{\pi_{B,r(\gamma)}(b) \pi_{A, r(\gamma)}(f(\gamma)) U_\gamma h(s(\gamma))}{k(r(\gamma))} \Delta(\gamma)^{-1/2} \, d
		\nu(\gamma) \\
		&= \int_G \ip{\pi_{A,r(\gamma)}(f(\gamma)) U_\gamma h(s(\gamma))}{\pi_{B, r(\gamma)}(b)^*k(r(\gamma))} \Delta(\gamma)^{-1/2} \, d
		\nu(\gamma) \\
		&= \ip{\pi_A \rtimes U(f)h}{\pi_B(b)^*k} \\
		&= \ip{\pi_B(b) (\pi_A \rtimes U)(f) h}{k}.
	\end{align*}
	This holds for all $h, k \in L^2 \bigl( \go*\frah, \mu \bigr)$, so $\pi_B$ and $\pi_A \rtimes U$ commute.
\end{proof}

\begin{proof}[Proof of Theorem \ref{thm:exchange}]
	Let $\rho$ be a faithful representation of $(\A \omax B) \rtimes_{\alpha \otimes \id} G$. Then by Renault's Disintegration Theorem, $\rho$ 
	is equivalent to the integrated form of a covariant representation $(\pi, U)$ on a direct integral $L^2(\go*\frah, \mu)$, so we can assume 
	$\rho = \pi \rtimes U$. Moreover, we can write $\pi = \pi_A \omax \pi_B$, where $\pi_A$ and $\pi_B$ are representations of $A$ and $B$, 
	respectively, on $L^2 \bigl( \go*\frah, \mu \bigr)$ with commuting ranges. Hence $\rho = (\pi_A \omax \pi_B) \rtimes U$. By Lemma 
	\ref{lem:exchange1}, $(\pi_A, U)$ is a covariant representation of $(\A, G, \alpha)$, and $\pi_B$ commutes with $\pi_A \rtimes U$ by 
	Lemma \ref{lem:exchange2}. Thus we can form the representation $L = (\pi_A \rtimes U) \omax \pi_B$ of $(\A \rtimes_\alpha G) \omax B$, 
	which we claim has the same range as $\rho$.
	
	Since $\Gamma_c(G, r^*\A) \odot B$ is dense in $(\A \omax B) \rtimes_{\alpha \otimes \id} G$ by Proposition \ref{prop:tensorILT}, the 
	range of $\rho$ is generated by elements of the form $\rho(f \ohat b)$ for $f \in \Gamma_c(G, r^*\A)$ and $b \in B$. On such an elementary 
	tensor, we have 
	\begin{align*}
		\rho (f \ohat b) h(u) &= \int_G \pi_u \bigl( f(\gamma) \otimes b \bigr) U_\gamma h(s(\gamma)) \Delta(\gamma)^{-1/2} \, 
				d\lambda^u(\gamma) \\
			&= \int_G \pi_{A, u}(f(\gamma)) \pi_{B,u}(b) U_\gamma h(s(\gamma)) \modu \, d\lambda^u(\gamma) \\
			&= \pi_B(b) \int_G \pi_{A,u}(f(\gamma)) U_\gamma h(s(\gamma)) \modu \, d\lambda^u(\gamma) \\
			&= \pi_B(b) \pi_A \rtimes U(f) h(u) \\
			&= (\pi_A \rtimes U) \omax \pi_B (f \otimes b) h(u).
	\end{align*}
	for any $h \in L^2 ( \go*\frah, \mu)$. Thus
	\[
		(\pi_A \omax \pi_B) \rtimes U \bigl( f \ohat b \bigr) = (\pi_A \rtimes U) \omax \pi_B(f \otimes b) = L(f \otimes b)
	\]
	for all $f \in \Gamma_c(G, r^*\A)$ and $b \in B$. Since $(\A \rtimes_\alpha G) \omax B$ is generated by elementary tensors of this form, 
	it follows that $\rho$ and $L$ have the same range.
	
	Since $\rho$ is faithful and its range is the same as that of $L$, it makes sense to define a homomorphism $\Phi : (\A \rtimes_\alpha G) 
	\omax B \to (\A \omax B) \rtimes_{\alpha \otimes \id} G$ by $\Phi = \rho^{-1} \circ L$. Note that $\Phi$ is surjective and has the desired 
	form on elementary tensors simply by definition. To see that $\Phi$ is an isomorphism, we will construct an explicit inverse. Suppose 
	$M$ is a faithful representation of $(\A \rtimes_\alpha G) \omax B$ on a direct integral $L^2 \bigl( \go*\frack, \mu' \bigr)$. Then we can 
	decompose $M$ as $M = (\rho_A \rtimes V) \omax \rho_B$, where $\rho_B$ and $\rho_A \rtimes U$ have commuting ranges. Note 
	that on elementary tensors we have
	\[
		M \bigl( (z \ohat a) \otimes b \bigr)  = \rho_A(a) V(z) \rho_B(b) = \rho_B(b) \rho_A(a) V(z),
	\]
	so $\rho_A$ and $\rho_B$ have commuting ranges. Thus we can define a representation $\rho = (\rho_A \omax \rho_B) \rtimes V$ of 
	$(\A \omax B) \rtimes_{\alpha \otimes \id} G$. By the same arguments as before, $\rho$ and $M$ have common range, and we can define 
	$\Psi : (\A \omax B) \rtimes_{\alpha \otimes \id} G \to (\A \rtimes_\alpha G) \omax B$ by $\Psi = M^{-1} \circ \rho$. Then clearly 
	$\Psi \circ \Phi$ and $\Phi \circ \Psi$ act as the identity on elementary tensors, so $\Phi$ is an isomorphism with $\Psi = \Phi^{-1}$. 
	Furthermore, it is easy to see that any other such isomorphism would need to agree with $\Phi$ on elementary tensors, hence everywhere. 
	Thus $\Phi$ is natural and unique.
\end{proof}

In Section 6 we will prove an analogue of Theorem \ref{thm:exchange} for reduced crossed products and spatial tensor products. It will follow in 
part from Theorem \ref{thm:exchange}, along with some tools that we will obtain in the next section. 

\section{Nuclearity}
The goal of this section is to prove the first of our two main theorems, which gives sufficient conditions for a groupoid crossed product to be 
nuclear.

\begin{thm}
\label{thm:nucleargroupoid}
	Let $(\A, G, \alpha)$ be a separable groupoid dynamical system, and suppose $A$ is nuclear and $G$ is measurewise amenable. 
	Then $\A \rtimes_\alpha G$ is nuclear.
\end{thm}

There are a few issues that we need to clear up. Suppose $(\A, G, \alpha)$ is a separable groupoid dynamical system. To prove that 
$\A \rtimes_\alpha G$ is nuclear, we need to show that $(\A \rtimes_\alpha G) \omax B = (\A \rtimes_\alpha G) \otimes_\sigma B$ for any 
$C^*$-algebra $B$. This could be problematic if $B$ is not separable. In particular, we need to consider the tensor product dynamical system 
$(\A \omax B, G, \alpha \otimes \id)$, which we can only construct under the assumption that $B$ is separable. It would not be feasible to work 
with a nonseparable $C^*$-algebra $B$, so we need to somehow restrict to the separable case. The following clever trick is due to Dana 
Williams and Roger Smith.

\begin{prop}
\label{prop:sepnuclear}
	Let $A$ be a  $C^*$-algebra. Suppose that for any \emph{separable} $C^*$-algebra $B$, $A \odot B$ has a unique $C^*$-norm. Then 
	$A$ is nuclear.
\end{prop}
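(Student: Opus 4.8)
The plan is to reduce the arbitrary case to the separable hypothesis by a standard subalgebra trick. Recall that $A$ is nuclear precisely when the algebraic tensor product $A \odot B$ carries a unique $C^*$-norm for \emph{every} $C^*$-algebra $B$, equivalently when the canonical surjection $A \omax B \to A \osig B$ is an isomorphism. So let $B$ be an arbitrary $C^*$-algebra and fix $t \in A \odot B$; it suffices to show $\norm{t}_{A \omax B} = \norm{t}_{A \osig B}$. Writing $t = \sum_{i=1}^n a_i \otimes b_i$ with $a_i \in A$ and $b_i \in B$, I would let $B_0 = C^*(b_1, \dots, b_n) \subseteq B$ be the $C^*$-subalgebra generated by the finitely many elements of $B$ occurring in $t$. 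Then $B_0$ is separable and $t \in A \odot B_0$, so the hypothesis gives $\norm{t}_{A \omax B_0} = \norm{t}_{A \osig B_0}$.

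The remaining step is to compare these with the norms in the ambient tensor products. First, the inclusion $B_0 \hookrightarrow B$ induces a $*$-homomorphism $A \omax B_0 \to A \omax B$ by functoriality of the maximal tensor product: any $*$-representation of $A \odot B$ restricts to one of $A \odot B_0$, and taking suprema over representations yields $\norm{t}_{A \omax B} \le \norm{t}_{A \omax B_0}$. Second, the minimal (spatial) tensor product is injective, so the inclusion $A \osig B_0 \hookrightarrow A \osig B$ is isometric and $\norm{t}_{A \osig B_0} = \norm{t}_{A \osig B}$. Finally, the minimal norm is dominated by every $C^*$-norm, so $\norm{t}_{A \osig B} \le \norm{t}_{A \omax B}$. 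Chaining these together gives
\[
	\norm{t}_{A \osig B} \le \norm{t}_{A \omax B} \le \norm{t}_{A \omax B_0} = \norm{t}_{A \osig B_0} = \norm{t}_{A \osig B},
\]
which forces $\norm{t}_{A \omax B} = \norm{t}_{A \osig B}$. Since $t$ was arbitrary the two $C^*$-norms on $A \odot B$ agree, and since $B$ was arbitrary $A$ is nuclear.

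The entire argument rests on the two functoriality properties of the tensor products—that a subalgebra inclusion induces a norm-decreasing map of maximal tensor products and an isometric inclusion of minimal tensor products—both of which are standard, so I do not anticipate a genuine obstacle. The only point requiring care is the observation that $t$ lies in $A \odot B_0$ for the separable subalgebra $B_0$, and this is immediate because $t$ involves only finitely many elements of $B$. This is exactly what licenses the appeal to the separable hypothesis and makes the reduction work.
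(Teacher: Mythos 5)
Your proof is correct and rests on the same key idea as the paper's: reduce to the separable $C^*$-subalgebra $B_0 = C^*(b_1, \dots, b_n)$ generated by the finitely many elements of $B$ appearing in a given element of $A \odot B$, and exploit the injectivity of the spatial tensor product to compare norms across the inclusion. The paper phrases this as a contrapositive, exhibiting the restriction of the maximal norm as a second $C^*$-norm on $A \odot B_0$, while you chain the inequalities directly using contractivity of $\id \otimes \iota$ on the maximal side and minimality of the spatial norm; this is only a cosmetic difference.
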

\begin{proof}
	Suppose that $C$ is a non-separable $C^*$-algebra for which $A \omax C \neq A \otimes_\sigma C$. Then there is some element 
	$\sum_{i=1}^n a_i \otimes c_i \in A \odot C$ such that
	\[
		\Bignorm{\sum a_i \otimes c_i}_\sigma < \Bignorm{\sum a_i \otimes c_i}_\text{max}.
	\]
	Put $B = C^*(\{c_1, \ldots, c_n\})$. Then $B$ is a separable $C^*$-subalgebra of $C$. Let $\norm{\cdot}_\gamma$ denote the 
	restriction of $\norm{\cdot}_\text{max}$ from $A \odot C$ 
	to $A \odot B$. Since the restriction of $\norm{\cdot}_\sigma$ to $A \odot B$ agrees with the spatial norm on $A \odot B$, we have
	\[
		\Bignorm{\sum a_i \otimes c_i}_\sigma < \Bignorm{\sum a_i \otimes c_i}_\gamma,
	\]
	and $\norm{\cdot}_\gamma$ is distinct from the spatial norm on $A \odot B$. In other words, $B$ is a separable $C^*$-algebra for which 
	$A \odot B$ has multiple $C^*$-norms, and we are done.
\end{proof}

We can now safely assume that $B$ is separable. Under this assumption, there are faithful representations $\rho_A$ and $\rho_B$ of $A$ 
and $B$, respectively, on \emph{separable} Hilbert spaces $\Hil_A$ and $\Hil_B$. Moreover, \cite[Thm. 8.3.2]{dixmier} allows us to assume 
that there is a Borel Hilbert bundle $\go * \frah$ and a finite Borel measure $\mu$ on $\go$ such that $\rho_A$ is a $C_0(\go)$-linear 
representation of $A$ on $\Hil_A = L^2(\go * \frah, \mu)$. We then define
\[
	\pi_A = \rho_A \otimes 1_{\Hil_B} \, \text{ and } \, \pi_B = 1_{\Hil_A} \otimes \rho_B.
\]
Then $\pi_A$ and $\pi_B$ are faithful representations of $A$ and $B$ on $L^2(\go*\frah, \mu) \otimes \Hil_B$ with commuting ranges. 
Thus there is an induced representation $\pi = \pi_A \omax \pi_B$ of $A \omax B$ on $L^2(\go*\frah, \mu) \otimes \Hil_B$. Moreover, since 
we are assuming $A$ is nuclear, $\pi$ is actually faithful.

In the following results, we will need to use the fact that $\pi$ can be viewed as a $C_0(\go)$-linear representation on a certain direct integral. 
The key to doing so is \cite[Prop. II.1.11]{dixVN}, which guarantees that there is an analytic Borel Hilbert bundle $\go * (\frah \otimes \Hil_B)$ 
whose fibers are simply $\Hil(x) \otimes \Hil_B$, and that $L^2(\go*\frah, \mu) \otimes \Hil_B$ is naturally isomorphic to 
$L^2(\go*(\frah \otimes \Hil_B), \mu)$. Therefore, we will frequently identify $\Hil_A \otimes \Hil_B$ with $\Hil = 
L^2(\go * (\frah \otimes \Hil_B), \mu)$. Moreover, Proposition \ref{prop:linrep} tells us that $\pi$ is a $C_0(\go)$-linear representation on 
$\Hil$ under this identification.

It is critical to verify that the induction process plays nicely with tensor products. The first step is to show that $\Ind \pi_A$ and $( \Ind \rho_A ) 
\otimes 1_{\Hil_B}$ are unitarily equivalent. Since $\rho_A : A \to B(\Hil_A)$, $\Ind \rho_A$ acts on $\Zz \otimes_A \Hil_A$, where $\Zz$ is the 
completion of $\Gamma_c(G, s^*\A)$ described at the end of Section 2. Thus $\Ind \rho_A \otimes 1_{\Hil_B}$ acts on $(\Zz \otimes_A \Hil_A) 
\otimes \Hil_B$. On the other hand, $\pi_A : A \to B(\Hil_A \otimes \Hil_B)$, so $\Ind \pi_A$ acts on $\Zz \otimes_A (\Hil_A \otimes \Hil_B)$.

\begin{prop}
\label{prop:inducedunitary}
	There is an isometry $U$ of $(\Zz_0 \odot \Hil_A) \odot \Hil_B$ onto $\Zz_0 \odot (\Hil_A \odot \Hil_B)$ 
	characterized on elementary tensors by
	\begin{equation}
	\label{eq:Udef}
		(z \otimes h) \otimes k \mapsto z \otimes (h \otimes k), \; z \in \Zz_0, h \in \Hil_A, k \in \Hil_B,
	\end{equation}
	which then extends to a unitary $U : (\Zz \otimes_A \Hil_A) \otimes \Hil_B \to \Zz \otimes_A (\Hil_A \otimes \Hil_B)$. Furthermore, $U$
	intertwines the representations $\Ind \pi_A$ and $( \Ind \rho_A ) \otimes 1_{\Hil_B}$.
\end{prop}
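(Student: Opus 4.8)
The plan is to build $U$ from the bottom up: first define the obvious algebraic map on elementary tensors, then show it preserves inner products, then extend by continuity and density to a unitary, and finally verify the intertwining relation on elementary tensors. To begin, I would note that the prescription $(z, h, k) \mapsto z \otimes (h \otimes k)$ is trilinear, so by the universal property of the algebraic tensor product it induces a well-defined linear map $U_0 : (\Zz_0 \odot \Hil_A) \odot \Hil_B \to \Zz_0 \odot (\Hil_A \odot \Hil_B)$, which is visibly onto at the algebraic level.

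The heart of the matter is the inner product computation showing $U_0$ is isometric. On the domain the relevant semi-inner product is the tensor product of the Rieffel inner product on $\Zz_0 \odot \Hil_A$ (whose completion is $\Zz \otimes_A \Hil_A$) with the inner product on $\Hil_B$, so that
\[
	\ip{(z \otimes h) \otimes k}{(w \otimes h') \otimes k'} = \ip{\rho_A(\hip{w}{z}_A) h}{h'} \, \ip{k}{k'}.
\]
On the target I would use $\pi_A = \rho_A \otimes 1_{\Hil_B}$ together with the characterizing formula for the Rieffel inner product to compute
\[
	\ip{z \otimes (h \otimes k)}{w \otimes (h' \otimes k')} = \ip{\pi_A(\hip{w}{z}_A)(h \otimes k)}{h' \otimes k'} = \ip{\rho_A(\hip{w}{z}_A) h}{h'} \, \ip{k}{k'},
\]
where the final equality is exactly the statement that $\rho_A(\hip{w}{z}_A) \otimes 1_{\Hil_B}$ acts as $\rho_A(\hip{w}{z}_A)$ on the first leg. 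Comparing the two displays shows $U_0$ preserves the semi-inner products, hence carries null vectors to null vectors and extends continuously to an isometry $U$ of the completions.

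To see that $U$ is a unitary I would check that the range of $U_0$, namely $\spa\{z \otimes (h \otimes k)\}$, is dense in $\Zz \otimes_A (\Hil_A \otimes \Hil_B)$. Since $\Hil_A \odot \Hil_B$ is dense in $\Hil_A \otimes \Hil_B$ and the map $\xi \mapsto z \otimes \xi$ is bounded, with $\norm{z \otimes \xi}^2 = \ip{\pi_A(\hip{z}{z}_A)\xi}{\xi} \le \norm{z}_{\Zz}^2 \norm{\xi}^2$, this span is dense; an isometry with dense range extends to a unitary between the completions.

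Finally, for the intertwining assertion it suffices to test on $f \in \Gamma_c(G, r^*\A)$ and on elementary tensors, using that $U$, $\Ind \pi_A(f)$, and $(\Ind \rho_A)(f) \otimes 1_{\Hil_B}$ are all bounded. The key observation is that both $\Ind \pi_A(f)$ and $\Ind \rho_A(f)$ implement the \emph{same} left action $z \mapsto f \cdot z$ on the $\Zz$-factor, independently of the coefficient Hilbert space, so that
\[
	U\bigl( [(\Ind \rho_A)(f) \otimes 1_{\Hil_B}]\bigl((z \otimes h) \otimes k\bigr) \bigr) = U\bigl( ((f \cdot z) \otimes h) \otimes k \bigr) = (f \cdot z) \otimes (h \otimes k) = \Ind \pi_A(f)\bigl( z \otimes (h \otimes k) \bigr),
\]
which is $\Ind \pi_A(f) \, U\bigl((z \otimes h) \otimes k\bigr)$; density then gives the relation everywhere. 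I expect the only delicate point to be bookkeeping the three nested tensor constructions---the $A$-balancing over the first leg versus the ordinary Hilbert-space tensor products---rather than any genuine analytic difficulty, since once the inner products are matched the rest is formal.
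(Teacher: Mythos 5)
Your proposal is correct and follows essentially the same route as the paper: the same inner-product computation matching $\ip{\pi_A(\hip{w}{z}_A)(h\otimes k)}{h'\otimes k'}$ with $\ip{\rho_A(\hip{w}{z}_A)h}{h'}\ip{k}{k'}$, followed by extension to the completions and the elementary-tensor check that both induced representations act by $z \mapsto f\cdot z$ on the $\Zz$-leg. Your added remarks on well-definedness via trilinearity and on density of the range are slight elaborations of steps the paper treats as immediate, not a different argument.
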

\begin{proof}
	Define $U$ on elementary tensors as in (\ref{eq:Udef}) and extend by linearity to all of $(\Zz_0 \odot \Hil_A) \odot \Hil_B$. Suppose
	$z_1, z_2 \in \Zz$, $h_1, h_2 \in \Hil_A$, and $k_1, k_2 \in \Hil_B$. Then we have
	\begin{align*}
		\ip{z_1 \otimes (h_1 \otimes k_1)}{z_2 \otimes (h_2 \otimes k_2)} &= \ip{\pi_A \bigl(\hip{z_2}{z_1}_A\bigr)(h_1 \otimes k_1)}{h_2 
		\otimes k_2} \\
			&= \bip{ (\rho_A \otimes 1_{\Hil_B}) \bigl(\hip{z_2}{z_1}_A\bigr)(h_1 \otimes k_1)}{h_2 \otimes k_2} \\
			&= \bip{\rho_A \bigl(\hip{z_2}{z_1}_A\bigr)h_1}{h_2} \ip{k_1}{k_2}.
	\end{align*}
	The first factor is precisely the inner product of $z_1 \otimes h_1$ and $z_2 \otimes h_2$ in $\Zz_0 \odot \Hil_A$, so
	\begin{align*}
		\ip{z_1 \otimes (h_1 \otimes k_1)}{z_2 \otimes (h_2 \otimes k_2)} &= \ip{z_1 \otimes h_1}{z_2 \otimes h_2} \ip{k_1}{k_2} \\
			&= \ip{(z_1 \otimes h_1) \otimes k_1}{(z_2 \otimes h_2) \otimes k_2}.
	\end{align*}
	Thus $U : (\Zz_0 \odot \Hil_A) \odot \Hil_B \to \Y_0 \odot (\Hil_A \odot \Hil_B)$ is isometric. It is clearly surjective, so it 
	extends to a unitary (also denoted $U$) from $(\Zz \otimes_A \Hil_A) \otimes \Hil_B$ onto $\Zz \otimes_A (\Hil_A 
	\otimes \Hil_B)$.

	Now let $f \in \Gamma_c(G, r^*\A)$, $z \in \Zz_0$, $h \in \Hil_A$, and $k \in \Hil_B$. Then
	\begin{align*}
		\Ind \pi_A(f) \cdot U \bigl( (z \otimes h) \otimes k \bigr) &= \Ind \pi_A(f) \bigl( z \otimes (h \otimes k) \bigr) \\
			&= f \cdot z \otimes (h \otimes k) \\
			&= U\bigl( (f \cdot z \otimes h) \otimes k \bigr) \\
			&= U \bigl( \Ind \rho_A(f)(z \otimes h) \otimes k \bigr) \\
			&= U \cdot \bigl( \Ind \rho_A \otimes 1_{\Hil_B} \bigr)(f) \bigl( (z \otimes h) \otimes k \bigr),
	\end{align*}
	so $U$ intertwines $\Ind \pi_A$ and $\Ind \rho_A \otimes 1_{\Hil_B}$.
\end{proof}

\begin{rem}
\label{rem:inducedB}
	In the proof of Theorem \ref{thm:nucleargroupoid}, we will also need a representation of $(\A \rtimes_\alpha G) \omax B$ on $\Zz \otimes_A 
	(\Hil_A \otimes \Hil_B)$. Of course we can do this by producing a representation $\tilde{\pi}_B$ of $B$ on 
	$\Zz \otimes_A (\Hil_A \otimes \Hil_B)$ whose range commutes with that of $\Ind \pi_A$. Form the representation $1_{\Zz \otimes \Hil} 
	\otimes \rho_B$ of $B$ on $(\Zz \otimes_A \Hil_A) \otimes \Hil_B$ and define
	\[
		\tilde{\pi}_B(b) = U (1_{\Zz \otimes \Hil_A} \otimes \rho_B(b)) U^*
	\]
	for all $b \in B$, where $U$ is the unitary from Proposition \ref{prop:inducedunitary}. Note then that
	\begin{align*}
		\tilde{\pi}_B(b) (z \otimes (h \otimes k)) &= U(1_{\Zz \otimes \Hil_A} \otimes \rho_B(b))U^*(z \otimes (h \otimes k)) \\
			&= U((z \otimes h) \otimes \rho_B(b)k) \\
			&= z \otimes (h \otimes \rho_B(b)k) \\
			&= z \otimes \pi_B(b)(h \otimes k).
	\end{align*}
	It is also straightforward to verify that $\Ind \pi_A$ and $\tilde{\pi}_B$ have commuting ranges: if $a \in \A \rtimes_\alpha G$ and 
	$b \in B$, then
	\begin{align*}
		\Ind \pi_A(a) \tilde{\pi}_B(b) &= U(\Ind \rho_A(a) \otimes 1_{\Hil_B})(1_{\Zz \otimes \Hil_A} \otimes \rho_B(b)) U^* \\
			&= U(1_{\Zz \otimes \Hil_A} \otimes \rho_B(b))(\Ind \rho_A(a) \otimes 1_{\Hil_B}) U^* \\
			&= \tilde{\pi}_B(b) \Ind \rho_A(a).
	\end{align*}
	Therefore, there is a representation $\Ind \pi_A \omax \tilde{\pi}_B$ of $(\A \rtimes_\alpha G) \omax B$ on $\Zz \otimes_A \Hil$.
\end{rem}

We have thus far considered only induced representations of $\A \rtimes_\alpha G$, but we will also need to look at representations of
$(\A \omax B) \rtimes_{\alpha \otimes \id} G$. In particular, we need to work with $\Ind (\pi_A \omax \pi_B)$. This representation acts on
the Hilbert space $\Y \otimes_{A \otimes B} \Hil$, where $\Y$ is the completion of the pre-imprimitivity bimodule $\Y_0 = \Gamma_c(G, 
s^*(\A \omax B))$ described in Section 2. Our final claim before we prove the main theorem is the following.

\begin{lem}
\label{lem:unitaryV}
	There is an isometry $V : \Y_0 \odot \Hil \to \Zz_0 \odot \Hil$, which is characterized on elementary 
	tensors by
	\[
		(f \ohat b) \otimes h \mapsto f \otimes \pi_B(b) h 
	\]
	for $f \in \Gamma_c(G, s^*\A)$, $b \in B$, and $h \in \Hil$. This isometry then extends to a unitary $V : \Y \otimes_{A \otimes B} \Hil 
	\to \Zz \otimes_A \Hil$.
\end{lem}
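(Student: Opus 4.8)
The plan is to construct the map $V$ on elementary tensors, verify it is well-defined and isometric on the dense subspace $\Y_0 \odot \Hil$, and then extend by continuity to the completions. The essential point is that the inner product computation on $\Y_0 \odot \Hil$ should reduce to the inner product computation on $\Zz_0 \odot \Hil$ via the formula defining $V$. First I would fix elementary tensors $(f_1 \ohat b_1) \otimes h_1$ and $(f_2 \ohat b_2) \otimes h_2$ in $\Y_0 \odot \Hil$, and compute the inner product
\[
	\ip{(f_1 \ohat b_1) \otimes h_1}{(f_2 \ohat b_2) \otimes h_2} = \ip{\pi\bigl( \hip{f_2 \ohat b_2}{f_1 \ohat b_1}_{A \otimes B} \bigr) h_1}{h_2},
\]
where $\pi = \pi_A \omax \pi_B$ and $\hip{\cdot}{\cdot}_{A \otimes B}$ is the $(A \omax B)$-valued inner product on $\Y_0$. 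The goal is to show this equals $\ip{f_1 \otimes \pi_B(b_1) h_1}{f_2 \otimes \pi_B(b_2) h_2}$ computed in $\Zz_0 \odot \Hil$.

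The key computation will be unwinding the $(A \omax B)$-valued inner product. Using the formula $\hip{w}{z}_{A}(u) = \int_G z(\xi)^* w(\xi)\, d\lambda_u(\xi)$ applied to the bundle $\A \omax B$, and the identity $(f_i \ohat b_i)(\xi) = f_i(\xi) \otimes b_i$, the integrand becomes
\[
	(f_2 \ohat b_2)(\xi)^* (f_1 \ohat b_1)(\xi) = \bigl( f_2(\xi)^* f_1(\xi) \bigr) \otimes (b_2^* b_1),
\]
so that $\hip{f_2 \ohat b_2}{f_1 \ohat b_1}_{A \otimes B} = \hip{f_2}{f_1}_A \otimes (b_2^* b_1)$, viewing the right-hand side as a section of $\A \omax B$ via the embedding $\ohat$. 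Applying $\pi = \pi_A \omax \pi_B$ and using that $\pi_A$ and $\pi_B$ commute, the inner product factors as $\ip{\pi_A(\hip{f_2}{f_1}_A) \pi_B(b_2^* b_1) h_1}{h_2}$. Since $\pi_B(b_2^* b_1) = \pi_B(b_2)^* \pi_B(b_1)$, this rearranges to $\ip{\pi_A(\hip{f_2}{f_1}_A)\, \pi_B(b_1) h_1}{\pi_B(b_2) h_2}$, which is exactly the inner product of $f_1 \otimes \pi_B(b_1) h_1$ and $f_2 \otimes \pi_B(b_2) h_2$ in $\Zz_0 \odot \Hil$, as desired.

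This inner-product identity simultaneously shows that $V$ respects the inner products (hence is isometric) and that it is well-defined: elements of the degenerate subspaces (those of inner-product norm zero) in $\Y_0 \odot \Hil$ map to degenerate elements of $\Zz_0 \odot \Hil$, so $V$ descends to the quotients used in forming the Rieffel tensor products. By continuity $V$ extends to an isometry $\Y \otimes_{A \otimes B} \Hil \to \Zz \otimes_A \Hil$. For surjectivity, I would note that the image of $V$ contains all $f \otimes \pi_B(b) h$, and since $\rho_B$ is nondegenerate the vectors $\pi_B(b) h = (1_{\Hil_A} \otimes \rho_B(b)) h$ span a dense subspace of $\Hil = \Hil_A \otimes \Hil_B$; combined with density of $\Gamma_c(G, s^*\A) \odot B$ in $\Y_0$ (Proposition \ref{prop:tensorILT}), this forces the range of $V$ to be dense, so $V$ is unitary. \textbf{The main obstacle} is bookkeeping the identifications cleanly---in particular, justifying $\hip{f_2 \ohat b_2}{f_1 \ohat b_1}_{A \otimes B} = \hip{f_2}{f_1}_A \otimes (b_2^* b_1)$ as an honest identity in the section algebra $A \omax B$ (not merely fiberwise), which rests on the embedding $\ohat$ and the fact that fiberwise multiplication in $\A \omax B$ is the algebraic tensor product multiplication.
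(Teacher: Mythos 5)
Your proposal is correct and follows essentially the same route as the paper: the same inner-product computation on elementary tensors (reducing $\hip{f_2 \ohat b_2}{f_1 \ohat b_1}_{A \otimes B}$ to $\hip{f_2}{f_1}_A \otimes b_2^*b_1$ and using that $\pi_A$ and $\pi_B$ commute), followed by surjectivity from nondegeneracy of $\pi_B$. The only step the paper spells out that you elide is that the inductive-limit-topology density of $\Gamma_c(G, s^*\A) \odot B$ in $\Y_0$ from Proposition \ref{prop:tensorILT} actually implies density in the Hilbert-module norm of $\Y$ (a routine estimate using that $\sup_u \lambda_u(K) < \infty$ for compact $K$), which is needed before one can legitimately compute only with elementary tensors.
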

\begin{proof}
	We know from Proposition \ref{prop:tensorILT} that elementary tensors of the form $f \ohat b$ span a dense subspace of $\Y_0 =
	\Gamma_c(G, s^*(\A \omax B))$ with respect to the inductive limit topology. We claim that this implies density with respect to the norm 
	topology on $\Y$. To see this, suppose $\{y_i\}$ is a net in $\Y_0$ that converges to $y \in \Y$ with respect to the inductive limit topology. 
	Then
	\begin{align*}
		\norm{y_i - y}_A^2 &= \norm{\hip{y_i - y}{y_i - y}_A} \\
			&= \sup_{u \in \go} \norm{\hip{y_i - y}{y_i - y}_A(u)} \\
			&\leq \sup_{u \in \go} \int_G \norm{(y_i-y)(\xi)^*(y_i-y)(\xi)} \, d\lambda_u(\xi) \\
			&= \sup_{u \in \go} \int_G \norm{y_i(\xi)-y(\xi)}^2 \, d\lambda_u(\xi).
	\end{align*}
	Let $\varepsilon > 0$. Since $y_i \to y$ in the inductive limit topology, there is a compact set $K$ such that $\supp(y_i), \supp(y) \subset K$ 
	and $\norm{y_i(\xi) - y(\xi)} < \varepsilon^{1/2}$ for sufficiently large $i$. Therefore, for sufficiently large $i$, we have
	\[
		\norm{y_i - y}_A^2 < \varepsilon \cdot \sup_{u \in \go} \lambda_u(K).
	\]
	A slight modification of \cite[Lem. 1.23]{geoff} shows that $\sup_{u \in \go} \lambda_u(K)$ is finite, so it follows that $y_i \to y$ in norm.
	
	Thanks to this claim, it will suffice to compute with elementary tensors. Let $f_1, f_2 \in \Y_0$, $b_1, b_2 \in B$, and $h_1, h_2 \in \Hil$. Then
	\begin{align*}
		\ip{V((f_1 \ohat b_1) \otimes h_1)}{V((f_2 \ohat b_2) \otimes h_2)} &=\ip{(f_1 \otimes \pi_B(b_1)h_1}{(f_2 \otimes \pi_B(b_2)h_2} \\
		& = \ip{\pi_A \bigl( \hip{f_2}{f_1}_A \bigr) \pi_B(b_1)h_1}{\pi_B(b_2)h_2} \\
		& = \ip{\pi_A \bigl( \hip{f_2}{f_1}_A \bigr) \pi_B(b_2^*b_1)h_1}{h_2} \\
		& = \ip{\pi \bigl( \hip{f_2}{f_1}_A \otimes b_2^*b_1 \bigr) h_1}{h_2},
	\end{align*}
	where we have used the fact that $\pi_A$ and $\pi_B$ commute. Now for each $u \in \go$,
	\begin{align*}
		\pi \bigl( \hip{f_2}{f_1}_A \otimes b_2^*b_1 \bigr)(u) &= \pi_u \bigl( \hip{f_2}{f_1}_A(u) \otimes b_2^*b_1 \bigr) \\
			&= \pi_{A,u} \bigl( \hip{f_2}{f_1}_A(u) \bigr) \pi_{B,u}(b_2^*b_1),
	\end{align*}
	where
	\begin{align*}
		\pi_{A,u} \bigl( \hip{f_2}{f_1}_A(u) \bigr) &= \pi_{A,u} \biggl( \int_G f_2(\xi)^* f_1(\xi) \, d\lambda_u(\xi) \biggr) \\
			&= \int_G \pi_{A, u} \bigl( f_2(\xi)^* f_1(\xi) \bigr) \, d\lambda_u(\xi).
	\end{align*}
	Now
	\begin{align*}
		\pi \bigl( \hip{f_2}{f_1}_A \otimes b_2^*b_1 \bigr)(u) &= \int_G \pi_{A, u} \bigl( f_2(\xi)^* f_1(\xi) \bigr) \pi_{B,u} (b_2^*b_1) \, 
			d\lambda_u(\xi) \\
			&= \int_G \pi_u \bigl( (f_2(\xi) \otimes b_2)^*(f_1(\xi) \otimes b_1) \bigr) \, d\lambda_u(\xi) \\
			&= \pi_u \biggl( \int_G (f_2 \ohat b_2)(\xi)^* (f_1 \ohat b_1)(\xi) \, d\lambda_u(\xi) \biggr) \\
			&= \pi \bigl( \hip{f_2 \otimes b_2}{f_1 \otimes b_1}_{A \otimes B} \bigr)(u).
	\end{align*}
	Therefore,
	\begin{align*}
		\ip{V((f_1 \ohat b_1) \otimes h_1)}{V((f_2 \ohat b_2) \otimes h_2)} &=\ip{\pi \bigl( \hip{f_2 \ohat b_2}{f_1 \ohat b_1}_{A \otimes B} 
			\bigr)h_1}{h_2} \\
		&= \ip{(f_1 \ohat b_1) \otimes h_1}{(f_2 \ohat b_2) \otimes h_2},
	\end{align*}
	so $V$ is an isometry. Since $\pi_B$ is nondegenerate, $V$ is clearly surjective and thus extends to a unitary $V : \Y \otimes \Hil \to 
	\Zz \otimes \Hil$.
\end{proof}

\begin{proof}[Proof of \ref{thm:nucleargroupoid}]
	Let $\rho_A : A \to B(\Hil_A)$ and $\rho_B : B \to B(\Hil_B)$ be the faithful representations of $A$ and $B$ described above, 
	and let $\pi_A = \rho_A \otimes 1_{\Hil_B}$ and $\pi_B = 1_{\Hil_A} \otimes \rho_B$. Then $\pi_A$ and $\pi_B$ are faithful, and since 
	$A$ is nuclear, the representation $\pi = \pi_A \omax~\pi_B$ of $A \omax B$ on $\Hil = \Hil_A \otimes \Hil_B$ is faithful. Moreover, since 
	$G$ is measurewise amenable, the induced representation $\Ind \pi$ of $(\A \omax B) \rtimes_{\alpha \otimes \id} G$ on $\Y
	\otimes_{A \otimes B} \Hil$ is also faithful as a consequence of Theorem 1 of \cite{aidan-dana}.
	
	Recall from Theorem \ref{thm:exchange} that there is an isomorphism $\Phi : (\A \rtimes_\alpha G) \omax B \to (\A \omax B) 
	\rtimes_{\alpha \otimes \id} G$, so $\Ind \pi \circ \Phi$ is a faithful representation of $(\A \rtimes_\alpha G) \omax B$. We claim that 
	the unitary $V$ of Lemma \ref{lem:unitaryV} intertwines this representation with $\Ind \pi_A \omax \tilde{\pi}_B$. Let $f \in \Gamma_c(G, 
	r^*\A)$, $y \in \Y_0$, $b, c \in B$, and $h \in \Hil$. Then
	\begin{align*}
		\Ind \pi_A \omax \tilde{\pi}_B(f \otimes b) V ((y \ohat c) \otimes h) &= \Ind \pi_A(f) \tilde{\pi}_B(b) (y \otimes \pi_B(c)h) \\
			&= \Ind \pi_A(f) (y \otimes \pi_B(bc)h) \\
			&= f \cdot y \otimes \pi_B(bc)h \\
			&= V((f \cdot y \ohat bc) \otimes h).
	\end{align*}
	Recall that
	\[
		f \cdot y(\gamma) = \int_G \alpha_\gamma^{-1}(f(\eta)) y(\eta^{-1}\gamma) \, d\lambda^{r(\gamma)}(\eta),
	\]
	so
	\begin{align*}
		(f \cdot y \ohat bc)(\gamma) &= f \cdot y(\gamma) \otimes bc \\
			&= \int_G \alpha_\gamma^{-1}(f(\eta)) y(\eta^{-1}\gamma) \otimes bc \, d\lambda^{r(\gamma)}(\eta) \\
			&= \int_G \bigl( \alpha_\gamma^{-1}(f(\eta)) \otimes b \bigr) \bigl( y(\eta^{-1}\gamma) \otimes c \bigr) \, 
				d\lambda^{r(\gamma)}(\eta) \\
			&= \int_G (\alpha \otimes \id)_\gamma^{-1}(f \ohat b(\eta)) (y \ohat c(\eta^{-1}\gamma)) \, d\lambda^{r(\gamma)}(\eta) \\
			&= (f \ohat b) \cdot (y \ohat c)(\gamma).
	\end{align*}
	Therefore,
	\begin{align*}
		V((f \cdot y \ohat bc) \otimes h) &= V((f \ohat b) \cdot (y \ohat c) \otimes h) \\
			&= V \cdot \Ind \pi(f \ohat b)((y \ohat c) \otimes h) \\
			&= V \cdot \Ind \pi \circ \Phi(f \ohat b)((y \ohat c) \otimes h).
	\end{align*}
	Since the tensors of the form $(y \ohat c) \otimes h$ span a dense subspace of $\Y \otimes_{A \otimes B} \Hil$, $\Ind \pi_A \omax 
	\tilde{\pi}_B(f \otimes b) V = V \Ind \pi \circ \Phi(f \ohat b)$. Similarly, $\Gamma_c(G, r^*\A) \odot B$ is dense in $\Gamma_c(G, 
	r^*(\A \omax B))$ with respect to the inductive limit topology, so it follows that $V$ intertwines $\Ind \pi_A \omax \tilde{\pi}_B$ and 
	$\Ind \pi \circ \Phi$. Thus $\Ind \pi_A \omax \tilde{\pi}_B$ is faithful.
	
	Now let $\kappa : (\A \rtimes_{\alpha} G) \omax B \to (\A \rtimes_\alpha G) \otimes_\sigma B$ be the canonical quotient map. Then we 
	have
	\[
		\bigl( \Ind \rho_A \otimes 1_{\Hil_B} \bigr) \omax \left( 1_{\Zz \otimes \Hil_A} \otimes \rho_B \right) = \bigl( \Ind \rho_A \otimes 
		\rho_B \bigr) \circ \kappa.
	\]
	We have already seen that the unitary $U$ of Proposition \ref{prop:inducedunitary} intertwines $\Ind \rho_A \otimes 1_{\Hil_B}$ with 
	$\Ind \pi_A$ and
	$1_{\Zz \otimes \Hil_A} \otimes \rho_B$ with $\tilde{\pi}_B$, so the left side is equivalent to $\Ind \pi_A \omax \tilde{\pi}_B$.
	This representation is faithful, as is $\Ind \rho_A \otimes \rho_B$, which forces $\kappa$ to be injective. Thus 
	\[
		(\A \rtimes_\alpha G) \omax B = (\A \rtimes_\alpha G) \otimes_\sigma B
	\]
	for any separable $C^*$-algebra $B$. By Proposition \ref{prop:sepnuclear}, this is enough to guarantee that $\A \rtimes_\alpha G$ is nuclear.
\end{proof}

\section{Exactness}
With the nuclearity theorem out of the way, we now aim to prove a related theorem on exactness for groupoid 
crossed products. It is a generalization of a theorem of Kirchberg, who showed in \cite[Prop. 7.1(v)]{kirchberg} that the crossed product of 
an exact $C^*$-algebra by an amenable group is again exact. His proof hinges upon the fact that the \emph{reduced} crossed product of an
exact $C^*$-algebra by an exact group is exact. We will do the same here for groupoids.

One of the ingredients that we will need is the promised analogue of Theorem \ref{thm:exchange} for reduced crossed products. It will mostly
follow from the earlier result for full crossed products, but there are still some technical details to resolve.

\begin{thm}
\label{thm:reducedexchange}
	There is a natural isomorphism
	\[
		\Psi : (\A \osig B) \rtimes_{\alpha \otimes \id, r} G \to (\A \rtimes_{\alpha, r} G) \osig B
	\]
	which is characterized on elementary tensors by
	\[
		\Psi ( f \ohat b ) = f \otimes b
	\]
	for $f \in \Gamma_c(G, r^*\A)$ and $b \in B$.
\end{thm}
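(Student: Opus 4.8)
The plan is to exhibit $\Psi$ as the unique isometric extension of the algebraic assignment $f \ohat b \mapsto f \otimes b$, matching the reduced norm on the left with the spatial norm on the right by means of a single unitary equivalence of induced representations. Throughout I take $A$ to be exact (the standing hypothesis that makes the left-hand system meaningful): by the second part of Theorem~\ref{thm:tensoraction} this guarantees that $(\A \osig B, G, \alpha \otimes \id)$ is a genuine dynamical system, and by Proposition~\ref{prop:minimalfiber} it identifies the fibers as $(\A \osig B)_u = \A_u \osig B$. First I would fix faithful representations $\rho_A : A \to B(\Hil_A)$ and $\rho_B : B \to B(\Hil_B)$, using \cite[Thm. 8.3.2]{dixmier} to make $\rho_A$ a $C_0(\go)$-linear representation on $\Hil_A = L^2(\go * \frah, \mu)$. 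With $\pi_A = \rho_A \otimes 1_{\Hil_B}$ and $\pi_B = 1_{\Hil_A} \otimes \rho_B$, their product is exactly the spatial representation $\pi = \rho_A \otimes \rho_B$, which is faithful on $A \osig B$; hence $\Ind \pi$ computes the reduced norm on $(\A \osig B) \rtimes_{\alpha \otimes \id, r} G$. On the other side, $\Ind \rho_A$ is faithful on $\A \rtimes_{\alpha, r} G$ by definition of the reduced norm, so $\Ind \rho_A \otimes \rho_B$ is a faithful representation of $(\A \rtimes_{\alpha, r} G) \osig B$ and computes its spatial norm.

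Next I would record the spatial versions of the two unitaries from Section 6. The argument of Proposition~\ref{prop:tensorILT} applies mutatis mutandis with $\omax$ replaced by $\osig$, so $\Gamma_c(G, r^*\A) \odot B$ embeds densely (in the inductive limit topology) into $\Gamma_c(G, r^*(\A \osig B))$; together with the direct computation $(f \ohat b) * (f' \ohat b') = (f * f') \ohat (bb')$ this shows that $f \ohat b \mapsto f \otimes b$ is a $*$-homomorphism onto a dense subalgebra. Because $\pi_B$ is manifestly diagonal, $\pi$ decomposes fiberwise as $\pi_u = \pi_{A,u} \osig \pi_{B,u}$ on $(\A \osig B)_u = \A_u \osig B$, and with this in hand the computation of Lemma~\ref{lem:unitaryV} goes through unchanged, yielding an isometry $V_\sigma : \Y \otimes_{A \osig B} \Hil \to \Zz \otimes_A \Hil$ determined by $(f \ohat b) \otimes h \mapsto f \otimes \pi_B(b)h$. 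Composing with the inverse of the unitary $U$ of Proposition~\ref{prop:inducedunitary} produces $W = U^{-1} \circ V_\sigma : \Y \otimes_{A \osig B} \Hil \to (\Zz \otimes_A \Hil_A) \otimes \Hil_B$, acting on elementary tensors by $(f \ohat b) \otimes (h \otimes k) \mapsto (f \otimes h) \otimes \rho_B(b)k$.

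The remaining step is to check that $W$ intertwines $\Ind \pi$ with $(\Ind \rho_A) \otimes \rho_B$. This reduces to the module identity $(f \ohat b) \cdot (y \ohat c) = (f \cdot y) \ohat (bc)$, which is precisely the computation already performed in the proof of Theorem~\ref{thm:nucleargroupoid}, since the left action only involves $\alpha \otimes \id$ and is insensitive to the choice of tensor norm on the fibers. Granting the intertwining, for any $f \in \Gamma_c(G, r^*\A)$ and $b \in B$ we obtain
\[
	\norm{f \ohat b}_r = \norm{\Ind \pi(f \ohat b)} = \norm{(\Ind \rho_A \otimes \rho_B)(f \otimes b)} = \norm{f \otimes b}_\sigma,
\]
and more generally norm-equality on all of $\Gamma_c(G, r^*\A) \odot B$. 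Thus the algebraic $*$-isomorphism extends to an isometric isomorphism $\Psi$ of the completions; naturality and uniqueness follow exactly as at the end of the proof of Theorem~\ref{thm:exchange}, since any isomorphism of this form is forced on the dense set of elementary tensors.

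I expect the one genuinely new point to be the spatial analogue of Lemma~\ref{lem:unitaryV}. Establishing that $\pi_u = \pi_{A,u} \osig \pi_{B,u}$ is a well-defined representation of the fiber and that $V_\sigma$ is isometric rests entirely on the fiber identification $(\A \osig B)_u = \A_u \osig B$, and this is the single place where exactness of $A$ is indispensable and where the minimal tensor product might otherwise fail to cooperate.
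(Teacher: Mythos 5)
Your argument is correct, but it takes a genuinely different route from the paper. The paper never works directly with a faithful representation of $A \osig B$; instead it stays on the maximal side and descends: it shows the canonical quotient $\kappa : A \omax B \to A \osig B$ is $G$-equivariant (so that $\kappa \rtimes \id$ exists), builds a unitary $T : \Y \otimes_{A \omax B} \Hil \to \W \otimes_{A \osig B} \Hil$ intertwining $\Ind(\pi_A \omax \pi_B)$ with $\Ind(\rho_A \otimes \rho_B) \circ (\kappa \rtimes \id)$, identifies the images of $\Ind(\pi_A \omax \pi_B)$ and $(\Ind \pi_A) \omax \tilde{\pi}_B$ with $(\A \osig B) \rtimes_{\alpha \otimes \id, r} G$ and $(\A \rtimes_{\alpha,r} G) \osig B$ respectively, and then reads $\Psi$ off the commutative square whose top row is the isomorphism $\Phi$ of Theorem~\ref{thm:exchange}. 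You instead compare $\Ind(\rho_A \otimes \rho_B)$ with $(\Ind \rho_A) \otimes \rho_B$ directly via a single unitary $W = U^{-1} \circ V_\sigma$, obtaining $\Psi$ as an isometry between the common dense subalgebra $\Gamma_c(G, r^*\A) \odot B$ on both sides. The price you pay is having to re-prove the spatial analogues of Proposition~\ref{prop:tensorILT} and Lemma~\ref{lem:unitaryV}; you correctly isolate the one place where this is not purely formal, namely the fiber identification $(\A \osig B)_u = \A_u \osig B$ of Proposition~\ref{prop:minimalfiber}, which is also exactly where the paper's construction of the system $(\A \osig B, G, \alpha \otimes \id)$ uses exactness of $A$. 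What the paper's detour buys is reuse: Theorem~\ref{thm:exchange} and Lemma~\ref{lem:unitaryV} are invoked verbatim, and the $G$-equivariance machinery behind $\kappa \rtimes \id$ (Proposition~\ref{prop:Gequivariant} and its reduced version) is needed anyway for the exact-sequence results later in the section. What your route buys is a more self-contained and arguably more transparent proof of this particular theorem: the norm identity $\norm{f \ohat b}_r = \norm{f \otimes b}_\sigma$ on elementary tensors makes both injectivity and well-definedness of $\Psi$ immediate, without passing through image identifications of a priori different representations.
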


The plan is to show that $\Psi$ is induced from the isomorphism $\Phi$ of Theorem \ref{thm:exchange} via the natural 
quotient map $\kappa : A \omax B \to A \osig B$. We first need to know that $\kappa$ is a \emph{$G$-equivariant} homomorphism, so that it 
induces a map $\kappa \rtimes \id : (\A \omax B) \rtimes_{\alpha \otimes \id} G \to (\A \osig B) \rtimes_{\alpha \otimes \id} G$.  
There does not seem to be any mention of $G$-equivariant homomorphisms in the literature yet, so we will develop some facts.

Suppose that $(\A, G, \alpha)$ and $(\B, G, \beta)$ are groupoid dynamical systems. Na\"{i}vely, a $G$-equivariant homomorphism 
$\varphi : A \to B$ should commute with the actions of $G$ on $\A$ and $\B$. Since we really need to work with the fibers of $\A$ and $\B$, 
it is necessary that $\varphi$ induce fiberwise homomorphisms. The natural way to do this is to require that $\varphi$ be a $C_0(\go)$-linear 
homomorphism, in which case we can appeal to Remark \ref{rem:bundlecorrespondence}. Thus $\varphi$ induces homomorphisms 
$\varphi_u : \A_u \to \B_u$ for all $u \in \go$, and we can use these to define $G$-equivariance.

\begin{defn}
	Let $G$ be a locally compact Hausdorff groupoid, and let $(\A, G, \alpha)$ and $(\B, G, \beta)$ be dynamical systems. A 
	$C_0(\go)$-linear homomorphism $\varphi : A \to B$ is called \emph{$G$-equivariant} if
	\[
		\varphi_{r(\gamma)} \bigl( \alpha_\gamma(a) \bigr) = \beta_\gamma \bigl( \varphi_{s(\gamma)}(a) \bigr)
	\]
	for all $\gamma \in G$ and $a \in \A_{s(\gamma)}$.
\end{defn}

As in \cite[Cor. 2.48]{TFB2} for groups, we would like to show that any $G$-equivariant homomorphism induces a homomorphism between the 
associated crossed products in a natural way. Recall that Propositions \ref{prop:pullbackhom} and \ref{prop:pullbackhom2} guarantee that 
there is a homomorphism $r^*\varphi : r^*A \to r^*B$ characterized by
\[
	r^*\varphi(f)(\gamma) = \varphi_{r(\gamma)} \bigl( f(\gamma) \bigr)
\]
for all $f \in \Gamma_0(G, r^*\A) \cong r^*A$ and $\gamma \in G$. Furthermore, $r^*\varphi$ takes compactly 
supported sections to compactly supported sections. It shouldn't be surprising that the restriction of $r^*\varphi$ to 
$\Gamma_c(G, r^*\A)$ is a homomorphism that extends to $\A \rtimes_\alpha G$.

\begin{prop}
\label{prop:Gequivariant}
	Let $(\A, G, \alpha)$ and $(\B, G, \beta)$ be groupoid dynamical systems, and let $\varphi : A \to B$ be a $G$-equivariant 
	homomorphism. Then there is a homomorphism $\varphi \rtimes \id : \A \rtimes_\alpha G \to \B \rtimes_\beta G$, which takes 
	$\Gamma_c(G, r^*\A)$ into $\Gamma_c(G, r^*\B)$, and
	\begin{equation}
	\label{eq:equivariant}
		\varphi \rtimes \id(f)(\gamma) = \varphi_{r(\gamma)}(f(\gamma))
	\end{equation}
	for $f \in \Gamma_c(G, r^*\A)$.
\end{prop}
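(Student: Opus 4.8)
The plan is to define $\varphi \rtimes \id$ on $\Gamma_c(G, r^*\A)$ by formula \eqref{eq:equivariant}, recognizing it as the restriction of the pullback homomorphism $r^*\varphi$ furnished by Propositions \ref{prop:pullbackhom} and \ref{prop:pullbackhom2}; those results already guarantee that it carries $\Gamma_c(G, r^*\A)$ into $\Gamma_c(G, r^*\B)$. It then remains to verify three things: that $\varphi \rtimes \id$ respects the convolution product and involution, that it is continuous in the inductive limit topology, and that it is norm-decreasing, so as to extend to the crossed product completions.

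For the product, I would fix $\gamma \in G$ and compute
\[
	\varphi \rtimes \id(f*g)(\gamma) = \varphi_{r(\gamma)}\Bigl( \int_G f(\eta)\alpha_\eta\bigl(g(\eta^{-1}\gamma)\bigr)\,d\lambda^{r(\gamma)}(\eta)\Bigr).
\]
Since $\varphi_{r(\gamma)}$ is a bounded linear map and the integrand takes values in the single fiber $\A_{r(\gamma)}$, it commutes with the (Bochner) integral; as $\varphi_{r(\gamma)}$ is multiplicative, I may split $\varphi_{r(\gamma)}\bigl(f(\eta)\alpha_\eta(g(\eta^{-1}\gamma))\bigr)$ into a product. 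The crucial point is fiber bookkeeping: for $\eta \in G^{r(\gamma)}$ one has $g(\eta^{-1}\gamma) \in \A_{r(\eta^{-1}\gamma)} = \A_{s(\eta)}$, so $G$-equivariance applies at the base point $\eta$ and gives $\varphi_{r(\eta)}\bigl(\alpha_\eta(g(\eta^{-1}\gamma))\bigr) = \beta_\eta\bigl(\varphi_{s(\eta)}(g(\eta^{-1}\gamma))\bigr)$. Recognizing $\varphi_{r(\eta)}(f(\eta)) = \varphi \rtimes \id(f)(\eta)$ and $\varphi_{s(\eta)}(g(\eta^{-1}\gamma)) = \varphi \rtimes \id(g)(\eta^{-1}\gamma)$, the integral collapses to $\bigl(\varphi \rtimes \id(f) * \varphi \rtimes \id(g)\bigr)(\gamma)$. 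The involution is handled the same way: since $f(\gamma^{-1})^* \in \A_{s(\gamma)}$, equivariance yields $\varphi_{r(\gamma)}\bigl(\alpha_\gamma(f(\gamma^{-1})^*)\bigr) = \beta_\gamma\bigl(\varphi_{s(\gamma)}(f(\gamma^{-1})^*)\bigr)$, and then $\varphi_{s(\gamma)}$ preserves adjoints, giving $(\varphi \rtimes \id(f))^*(\gamma)$.

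To pass to the completions, I would first observe that $\varphi \rtimes \id$ is inductive-limit continuous: if $f_i \to f$ uniformly with supports in a fixed compact set $K$, then each $\varphi_u$ is norm-decreasing (being a $*$-homomorphism on a fiber), so $\varphi \rtimes \id(f_i) \to \varphi \rtimes \id(f)$ uniformly, with supports again contained in $K$. Consequently, for any representation $\pi$ of $\Gamma_c(G, r^*\B)$, the composite $\pi \circ (\varphi \rtimes \id)$ is a representation of $\Gamma_c(G, r^*\A)$, whence $\norm{\pi(\varphi \rtimes \id(f))} \leq \norm{f}$. Taking the supremum over all such $\pi$ gives $\norm{\varphi \rtimes \id(f)} \leq \norm{f}$ in the universal norms, so $\varphi \rtimes \id$ extends to a homomorphism $\A \rtimes_\alpha G \to \B \rtimes_\beta G$ of the desired form.

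The main obstacle is really just the careful fiber bookkeeping in the product computation: keeping track of which fiber each term inhabits so that $G$-equivariance is applied at the point $\eta$ rather than $\gamma$, and justifying the interchange of $\varphi_{r(\gamma)}$ with the fiberwise-valued integral. Everything past that is formal.
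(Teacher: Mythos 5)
Your proposal is correct and follows essentially the same route as the paper: define the map as the restriction of the pullback homomorphism, verify multiplicativity and the $*$-property by applying $G$-equivariance at the point $\eta$ inside the convolution integral (your computation is the paper's run in the reverse direction), and then extend by a boundedness argument. The only immaterial difference is in that last step: the paper shows $\varphi \rtimes \id$ is $I$-norm decreasing, whereas you establish inductive-limit continuity and compose with representations of $\Gamma_c(G, r^*\B)$ directly; both yield the universal-norm bound.
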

\begin{proof}
	Define $\varphi \rtimes \id$ on $\Gamma_c(G, r^*\A)$ as in (\ref{eq:equivariant}). We have already observed that $\varphi \rtimes \id$ is 
	linear and maps into $\Gamma_c(G, r^*\B)$, and we claim now that it is a homomorphism. If $f, g \in \Gamma_c(G, r^*\A)$, then we have
	\begin{align*}
		\bigl( \varphi \rtimes \id(f) \bigr) * \bigl( \varphi \rtimes \id(g) \bigr)(\gamma) &= \int_G \varphi \rtimes \id(f)(\eta) \beta_\eta \bigl( 
			\varphi \rtimes \id(g)(\eta^{-1}\gamma) \bigr) \, d\lambda^{r(\gamma)}(\eta) \\
			&= \int_G \varphi_{r(\eta)}(f(\eta)) \beta_\eta \bigl( \varphi_{r(\eta^{-1}\gamma)} \bigl( g(\eta^{-1}\gamma) \bigr) \bigr) \, 
				d\lambda^{r(\gamma)}(\eta) \\
			&= \int_G \varphi_{r(\eta)}(f(\eta)) \beta_\eta \bigl( \varphi_{s(\eta)} \bigl( g(\eta^{-1}\gamma) \bigr) \bigr) \, 
				d\lambda^{r(\gamma)}(\eta).
	\end{align*}
	Since $\varphi$ is $G$-equivariant, this becomes
	\begin{align*}
		&= \int_G \varphi_{r(\gamma)}(f(\eta)) \varphi_{r(\eta)} \bigl( \alpha_\eta \bigl(g(\eta^{-1}\gamma) \bigr) \bigr) \, 
			d\lambda^{r(\gamma)}(\eta) \\
		&= \int_G \varphi_{r(\gamma)} \bigl(f(\eta) \alpha_\eta \bigl( g(\eta^{-1}\gamma) \bigr) \bigr) \, d\lambda^{r(\gamma)}(\eta) \\
		&= \varphi_{r(\gamma)} \bigl( f*g(\gamma) \bigr) \\
		&= \varphi \rtimes \id(f*g)(\gamma),
	\end{align*}
	so $\varphi \rtimes \id$ is multiplicative. Similarly, if $f \in \Gamma_c(G, r^*\A)$, we have
	\begin{align*}
		\varphi \rtimes \id(f^*)(\gamma) &= \varphi_{r(\gamma)} \bigl( \alpha_\gamma \bigl( f(\gamma^{-1})^* \bigr) \bigr) \\
			&= \beta_\gamma \bigl( \varphi_{s(\gamma)} \bigl( f(\gamma^{-1})^* \bigr) \bigr) \\
			&= \beta_\gamma \bigl( \varphi \rtimes \id(f)(\gamma^{-1})^* \bigr) \\
			&= \varphi \rtimes \id(f)^*(\gamma).
	\end{align*}
	Therefore, $\varphi \rtimes \id$ is a $*$-homomorphism.
	
	The only thing left to check is that $\varphi \rtimes \id$ extends to $\A \rtimes_\alpha G$. We will do this by showing that it is bounded 
	with respect to the $I$-norm (Equation 4.2 of \cite{mw08}) on $\Gamma_c(G, r^*\A)$. If $f \in \Gamma_c(G, r^*\A)$, then
	\[
		\int_G \norm{\varphi \rtimes \id(f)(\gamma)} \, d\lambda^u(\gamma) = \int_G \norm{\varphi_{r(\gamma)}(f(\gamma))} \, 
			d\lambda^u(\gamma) \leq 
			\int_G \norm{f(\gamma)} \, d\lambda^u(\gamma)
	\]
	for all $u \in \go$. A similar computation shows that $\int_G \norm{\varphi \rtimes \id(f)(\gamma)} \, d\lambda_u(\gamma) \leq  
	\int_G \norm{f(\gamma)} \, d\lambda_u(\gamma)$, so it follows that $\norm{\varphi \rtimes \id(f)}_I \leq \norm{f}_I$. Thus 
	$\varphi \rtimes \id$ is $I$-norm decreasing, so it extends to a homomorphism $\varphi \rtimes \id : \A \rtimes_\alpha G \to 
	\B \rtimes_\beta G$.
\end{proof}

We'll now apply this machinery to a very particular homomorphism. Let $(\A, G, \alpha)$ be a separable groupoid dynamical system, and 
let $B$ be a separable $C^*$-algebra. Let $\kappa : A \omax B \to A \otimes_\sigma B$ be the canonical quotient map.

\begin{prop}
	The homomorphism $\kappa : A \omax B \to A \otimes_\sigma B$ is $G$-equivariant, and thus induces a homomorphism 
	$\kappa \rtimes \id : (\A \omax B) \rtimes_{\alpha \otimes \id} G \to (\A \otimes_\sigma B) \rtimes_{\alpha \otimes \id} G$.
\end{prop}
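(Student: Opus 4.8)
The plan is to verify the two hypotheses needed to invoke Proposition \ref{prop:Gequivariant} for $\kappa$, namely that $\kappa$ is $C_0(\go)$-linear and that its fiber maps intertwine the actions $\alpha \omax \id$ and $\alpha \otimes \id$. The statement implicitly presupposes that $(\A \osig B, G, \alpha \otimes \id)$ is a genuine dynamical system, which by Theorem \ref{thm:tensoraction}(b) happens exactly when $A$ is exact; we therefore work in that setting, so that in particular $(\A \osig B)_u \cong \A_u \osig B$ by Proposition \ref{prop:minimalfiber} and $\alpha_\gamma \otimes \id$ is defined fiberwise.

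First I would check that $\kappa$ is $C_0(\go)$-linear. The $C_0(\go)$-action on each of $A \omax B$ and $A \osig B$ is inherited from the action on $A$, so for $f \in C_0(\go)$ and an elementary tensor $a \otimes b$ we have $f \cdot (a \otimes b) = (f \cdot a) \otimes b$ in both algebras. Since $\kappa$ restricts to the identity on the algebraic tensor product $A \odot B$, it follows at once that $\kappa\bigl(f \cdot (a \otimes b)\bigr) = (f \cdot a) \otimes b = f \cdot \kappa(a \otimes b)$. As $\kappa$ is a $*$-homomorphism between $C^*$-algebras it is norm continuous, and $A \odot B$ is dense in $A \omax B$, so $C_0(\go)$-linearity extends to all of $A \omax B$.

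Next I would pin down the fiber maps. By Remark \ref{rem:bundlecorrespondence}, $\kappa$ induces homomorphisms $\kappa_u : \A_u \omax B \to \A_u \osig B$ characterized by $\kappa_u(t(u)) = \kappa(t)(u)$. Evaluating on an elementary tensor $t = a \otimes b$ gives $t(u) = a(u) \otimes b$ and $\kappa(t)(u) = a(u) \otimes b$, so $\kappa_u$ is precisely the canonical maximal-to-spatial quotient map, acting as the identity on elementary tensors. With this identification the equivariance condition is a one-line computation: for $\gamma \in G$, $a \in \A_{s(\gamma)}$, and $b \in B$,
\[
	\kappa_{r(\gamma)}\bigl((\alpha_\gamma \omax \id)(a \otimes b)\bigr) = \kappa_{r(\gamma)}\bigl(\alpha_\gamma(a) \otimes b\bigr) = \alpha_\gamma(a) \otimes b = (\alpha_\gamma \otimes \id)\bigl(\kappa_{s(\gamma)}(a \otimes b)\bigr).
\]
Both sides are norm-continuous in $a \otimes b$ and the elementary tensors are dense in $\A_{s(\gamma)} \omax B$, so the identity holds for all of $\A_{s(\gamma)} \omax B$. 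Thus $\kappa$ is $G$-equivariant, and Proposition \ref{prop:Gequivariant} yields the induced homomorphism $\kappa \rtimes \id : (\A \omax B) \rtimes_{\alpha \otimes \id} G \to (\A \osig B) \rtimes_{\alpha \otimes \id} G$ with the stated form on $\Gamma_c(G, r^*(\A \omax B))$.

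The main obstacle here is bookkeeping rather than analysis: one must carefully track the several fiber identifications from Section 3.1 (especially $(\A \osig B)_u \cong \A_u \osig B$, which is where exactness of $A$ genuinely enters) and confirm that under these identifications the fiber maps $\kappa_u$ really are the fiberwise quotient maps. Once that is settled, equivariance reduces to the observation that $\alpha_\gamma \omax \id$ and $\alpha_\gamma \otimes \id$ both act by $\alpha_\gamma \otimes \id$ on elementary tensors and are intertwined by the identity-on-tensors map $\kappa_u$.
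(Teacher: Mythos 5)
Your proof is correct and follows essentially the same route as the paper: check $C_0(\go)$-linearity on elementary tensors, identify the fiber maps $\kappa_u$ as the canonical quotient maps $\A_u \omax B \to \A_u \osig B$, verify equivariance on elementary tensors, and invoke Proposition \ref{prop:Gequivariant}. Your explicit remarks about the density arguments and about exactness of $A$ being needed (via Proposition \ref{prop:minimalfiber}) for the target system to make sense are sensible additions but do not change the argument.
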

\begin{proof}
	It is easy to verify that $\kappa$ is $C_0(\go)$-linear, for on elementary tensors
	\[
		\kappa \bigl( f \cdot (a \omax b) \bigr) = \kappa ( f \cdot a \omax b) = f \cdot a \osig b = f \cdot \kappa(a \omax b)
	\]
	for any $f \in C_0(\go)$. (Here we write $a \omax b$ and $a \osig b$ to emphasize where each elementary tensor lives.) Thus 
	$\kappa$ admits a fiberwise decomposition $\kappa_u : \A_u \omax B \to \A_u \otimes_\sigma B$ by Remark 
	\ref{rem:bundlecorrespondence}. We claim that for each $u \in \go$, $\kappa_u$ is nothing more than the canonical quotient map 
	$\A_u \omax B \to \A_u \osig B$. If $a \in A$ and $b \in B$, then for any $u \in \go$ we have by definition
	\[
		\kappa_u(a(u) \omax b) = \kappa_u\bigl( (a \omax b)(u) \bigr) = \kappa(a \omax b)(u).
	\]
	The right hand side is $(a \osig b)(u) = a(u) \osig b$, so $\kappa_u$ is the desired homomorphism.
	
	It remains to verify that $\kappa$ is indeed $G$-equivariant. Let $\gamma \in G$, $a \in \A_{s(\gamma)}$, and $b \in B$. Then we have
	\begin{align*}
		\kappa_{r(\gamma)} \bigl( \alpha_\gamma \omax \id(a \omax b) \bigr) &= \kappa_{r(\gamma)} \bigl( \alpha_\gamma(a) \omax b \bigr) \\
			&= \alpha_\gamma(a) \osig b \\
			&= \alpha_\gamma \osig \id(a \osig b) \\
			&= \alpha_\gamma \osig \id \bigl( \kappa_{s(\gamma)} (a \omax b) \bigr).
	\end{align*}
	Thus $\kappa$ respects the $G$-actions on $\A$ and $\B$. It then follows from Proposition \ref{prop:Gequivariant} that $\kappa$ 
	induces a homomorphism $\kappa \rtimes \id : (\A \omax B) \rtimes_{\alpha \otimes \id} G \to (\A \otimes_\sigma B) \rtimes_{\alpha \otimes 
	\id} G$.
\end{proof}
 
Now let $\rho_A$ and $\rho_B$ be faithful separable representations of $A$ and $B$ on Hilbert spaces $\Hil_A$ and $\Hil_B$, respectively, 
and put $\pi_A = \rho_A \otimes 1_{\Hil_B}$ and $\pi_B = 1_{\Hil_A} \otimes \rho_B$. We can assume that there is an analytic Borel Hilbert 
bundle $\go*\frah$ and a finite Borel measure $\mu$ on $\go$ such that $\rho_A$ is a $C_0(\go)$-linear representation of $A$ on 
$L^2(\go*\frah, \mu)$. Then $\rho_A$ is decomposable, and we have
\[
	\pi_A = \rho_A \otimes 1_{\Hil_B} = \int^\oplus_{\go} \rho_{A, u} \otimes 1_{\Hil_B} \, d\mu(u),
\]
so $\pi_A$ is decomposable as well, with $\pi_{A, u} = \rho_{A, u} \otimes 1_{\Hil_B}$. With this setup in place, the first result we need
to establish is that
\[
	\Ind(\rho_A \otimes \rho_B) \circ (\kappa \rtimes \id) = \Ind(\pi_A \omax \pi_B).
\]
We already know that $\Ind(\pi_A \omax \pi_B)$ acts on the Hilbert space $\Y \otimes_{A \omax B} \Hil$, where $\Y$ is a completion of the
Hilbert module $\Y_0 = \Gamma_c(G, s^*(\A \omax B))$. On the other hand, $\Ind (\rho_A \otimes \rho_B)$ acts on 
$\W \otimes_{A \osig B} \Hil$, where $\W$ arises as a completion of $\W_0 = \Gamma_c(G, s^*(\A \osig B))$. We need 
to reconcile this somehow.

\begin{lem}
	There is an isometry $T : \Y_0 \odot \Hil \to \W_0 \odot \Hil$, characterized on elementary tensors by
	\[
		T(y \otimes h) = s^*\kappa(y) \otimes h
	\]
	for $y \in \Y_0$ and $h \in \Hil$. This isometry extends to a unitary $T : \Y \otimes_{A \omax B} \Hil \to \W \otimes_{A \osig B} \Hil$, which
	intertwines $\Ind(\rho_A \otimes \rho_B) \circ (\kappa \rtimes \id)$ and $\Ind(\pi_A \omax \pi_B)$.
\end{lem}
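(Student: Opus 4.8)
The plan is to follow the template of Lemma \ref{lem:unitaryV}: prove that $T$ is isometric on elementary tensors, use a density argument to extend it to a unitary, and finally verify the intertwining relation on a spanning set. Two identities carry all the content. First, for $y_1, y_2 \in \Y_0$ and $u \in \go$, the measure $\lambda_u$ is supported on $G_u = s^{-1}(u)$, so $\kappa_{s(\xi)} = \kappa_u$ along the integral; since each $\kappa_u$ is a homomorphism commuting with integration, the pullback formula of Proposition \ref{prop:pullbackhom2} gives
\[
	\hip{s^*\kappa(y_1)}{s^*\kappa(y_2)}_{A \osig B}(u) = \int_G \kappa_u\bigl( y_1(\xi) \bigr)^* \kappa_u\bigl( y_2(\xi) \bigr) \, d\lambda_u(\xi) = \kappa_u\bigl( \hip{y_1}{y_2}_{A \omax B}(u) \bigr),
\]
so that $\hip{s^*\kappa(y_1)}{s^*\kappa(y_2)}_{A \osig B} = \kappa\bigl( \hip{y_1}{y_2}_{A \omax B} \bigr)$. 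Second, because $\pi_A = \rho_A \otimes 1_{\Hil_B}$ and $\pi_B = 1_{\Hil_A} \otimes \rho_B$ act on $\Hil_A \otimes \Hil_B$ with $\pi_A(a)\pi_B(b) = \rho_A(a) \otimes \rho_B(b)$, the maximal representation factors through the quotient map as $\pi_A \omax \pi_B = (\rho_A \otimes \rho_B) \circ \kappa$. Combining the two identities with the defining inner products on $\Y_0 \odot \Hil$ and $\W_0 \odot \Hil$ yields
\[
	\ip{T(y_1 \otimes h_1)}{T(y_2 \otimes h_2)} = \ip{(\rho_A \otimes \rho_B)\bigl( \kappa(\hip{y_2}{y_1}_{A \omax B}) \bigr) h_1}{h_2} = \ip{\pi_A \omax \pi_B\bigl( \hip{y_2}{y_1}_{A \omax B} \bigr) h_1}{h_2},
\]
which is exactly $\ip{y_1 \otimes h_1}{y_2 \otimes h_2}$; hence $T$ is isometric.

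For surjectivity, observe that $s^*\kappa$ carries the ``maximal'' elementary tensor $f \ohat b \in \Y_0$ to the ``spatial'' elementary tensor $f \ohat b \in \W_0$, since $\kappa_{s(\gamma)}(f(\gamma) \otimes b) = f(\gamma) \osig b$. By Proposition \ref{prop:tensorILT} the spatial elementary tensors are dense in $\W_0$ in the inductive limit topology, so the image of $s^*\kappa$ is inductive-limit dense in $\W_0$. The Haar-system norm estimate proved in Lemma \ref{lem:unitaryV}, which applies verbatim to $\W$, upgrades this to density in the Hilbert-module norm, so $s^*\kappa(\Y_0) \odot \Hil$ is dense in $\W \otimes_{A \osig B} \Hil$. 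Thus $T$ has dense range and extends to a unitary.

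It remains to verify the intertwining relation, and this is the step that carries the real computational weight. Since both induced representations act by the respective module multiplications and $T$ sends $y \otimes h$ to $s^*\kappa(y) \otimes h$, it suffices to show, for $F \in \Gamma_c(G, r^*(\A \omax B))$ and $y \in \Y_0$, that
\[
	s^*\kappa(F \cdot y) = (\kappa \rtimes \id)(F) \cdot s^*\kappa(y).
\]
Writing out the module action and using the formula $(\kappa \rtimes \id)(F)(\eta) = \kappa_{r(\eta)}(F(\eta))$ from Proposition \ref{prop:Gequivariant}, the crux is the interchange
\[
	\bar{\alpha}_\gamma^{-1}\bigl( \kappa_{r(\gamma)}(F(\eta)) \bigr) = \kappa_{s(\gamma)}\bigl( (\alpha \otimes \id)_\gamma^{-1}(F(\eta)) \bigr),
\]
which is precisely the $G$-equivariance of $\kappa$ (here $\bar{\alpha} = \alpha \otimes \id$ denotes the action on $\A \osig B$). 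Feeding this into the integrand and pulling the homomorphism $\kappa_{s(\gamma)}$ out of the integral collapses the right-hand side to $\kappa_{s(\gamma)}(F \cdot y(\gamma)) = s^*\kappa(F \cdot y)(\gamma)$. The intertwining relation then extends from elementary tensors to all of $\Y \otimes_{A \omax B} \Hil$ by density and continuity.

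I expect the main obstacle to be bookkeeping rather than anything conceptual: one must keep straight the two actions (maximal versus spatial), track the fibers $\A_{r(\gamma)}$ and $\A_{s(\gamma)}$ over which the various pullbacks live, and invoke the $G$-equivariance of $\kappa$ in exactly the right place to move $\kappa_{s(\gamma)}$ across the twist $\bar{\alpha}_\gamma^{-1}$. The genuine heart of the argument is the observation that the maximal representation $\pi_A \omax \pi_B$ is literally the spatial representation $\rho_A \otimes \rho_B$ precomposed with $\kappa$; this is what makes the isometry computation succeed even though $T$ bridges the maximal and spatial crossed products.
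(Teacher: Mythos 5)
Your proposal is correct and follows essentially the same route as the paper: the isometry computation rests on the identity $\hip{s^*\kappa(y_2)}{s^*\kappa(y_1)}_{A \osig B} = \kappa\bigl(\hip{y_2}{y_1}_{A \omax B}\bigr)$ together with the factorization $\pi_A \omax \pi_B = (\rho_A \otimes \rho_B) \circ \kappa$, and the intertwining reduces to the $G$-equivariance of $\kappa$ exactly as in the paper's elementary-tensor calculation. If anything, you supply more detail than the paper on surjectivity (which the paper dismisses as ``clearly surjective''), via the density of elementary tensors and the $I$-norm estimate from Lemma \ref{lem:unitaryV}.
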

\begin{proof}
	Define $T$ on elementary tensors as above. Then for $y_1, y_2 \in \Y_0$ and $h_1, h_2 \in \Hil$, we have
	\begin{align*}
		&\ip{T(y_1 \otimes h_1)}{T(y_2 \otimes h_2)} = \ip{s^*\kappa(y_1) \otimes h_1}{s^*\kappa(y_2) \otimes h_2} \\
			&\quad \quad \quad = \ip{\rho_A \otimes \rho_B \bigl( \hip{s^*\kappa(y_2)}{s^*\kappa(y_1)}_{A \osig B} \bigr)h_1}{h_2} \\
			&\quad \quad \quad = \int_G \ip{(\rho_A \otimes \rho_B)_u \bigl( \hip{s^*\kappa(y_2)}{s^*\kappa(y_1)}_{A \osig B}(u) \bigr) 
				h_1(u)}{h_2(u)} \, d\mu(u).
	\end{align*}
	Now
	\begin{align*}
		\hip{s^*\kappa(y_2)}{s^*\kappa(y_1)}_{A \osig B}(u) 
			&= \int_G \kappa_{s(\xi)}(y_2(\xi))^* \kappa_{s(\xi)}(y_1(\xi)) \, d\lambda_u(\xi) \\
			&= \kappa_u \biggl( \int_G y_2(\xi)^* y_1(\xi) \, d\lambda_u(\xi) \biggr) \\
			&= \kappa_u \bigl( \hip{y_2}{y_1}_{A \omax B}(u) \bigr),
	\end{align*}
	so
	\begin{align*}
		\ip{T(y_1 \otimes h_1)}{T(y_2 \otimes h_2)} &= \ip{\rho_A \otimes \rho_B \circ \kappa \bigl( \hip{y_2}{y_1}_{A \omax B} \bigr) 
			h_1}{h_2} \\
			&= \ip{\pi_A \omax \pi_B \bigl( \hip{y_2}{y_1}_{A \omax B} \bigr) h_1}{h_2} \\
			&= \ip{y_1 \otimes h_1}{y_2 \otimes h_2}.
	\end{align*}
	Therefore $T$ is isometric. It is clearly surjective, so it then extends to a unitary $T : \Y \otimes_{A \omax B} \Hil \to \W \otimes_{A 
	\osig B} \Hil$. 
	
	All that is left is the verification that $T$ intertwines $\Ind(\rho_A \otimes \rho_B) \circ (\kappa \rtimes \id)$ and $\Ind(\pi_A \omax \pi_B)$.
	Let $f \in \Gamma_c(G, s^*\A)$, $w \in \W_0$, $b, c \in B$, and $h \in \Hil$. Then
	\[
		\Ind(\rho_A \otimes \rho_B) \circ (\kappa \rtimes \id)(f \ohat b) T((w \ohat c) \otimes h) = \kappa \rtimes \id(f \ohat b) \cdot 
		s^*\kappa(w \ohat c) \otimes h,
	\]
	where
	\begin{align*}
		&\kappa \rtimes \id(f \ohat b) \cdot s^*\kappa(w \ohat c)(\gamma) \\
		&\quad \quad \quad = \int_G (\alpha \otimes \id)_\gamma^{-1}(\kappa \rtimes \id(f \ohat b)(\eta)) w \ohat c(\eta^{-1}\gamma) \, 
			d\lambda^{r(\gamma)}(\eta) \\
		& \quad \quad \quad = \int_G(\alpha \otimes \id)_\gamma^{-1}( \kappa_{r(\gamma)}(f(\gamma) \omax b) )(w(\eta^{-1}\gamma) \otimes 
			c) \, d\lambda^{r(\gamma)}(\eta) \\
		& \quad \quad \quad = \biggl( \int_G \alpha_\gamma^{-1}(f(\eta)) w(\eta^{-1}\gamma) \, d\lambda^{r(\gamma)}(\eta) \biggr) \otimes bc\\
		& \quad \quad \quad = f \cdot w(\gamma) \otimes bc \\
		& \quad \quad \quad = \kappa_{s(\gamma)} \bigl( f \cdot w(\gamma) \omax bc \bigr).
	\end{align*}
	Thus
	\begin{align*}
		\kappa \rtimes \id(f \ohat b) \cdot s^*\kappa(w \ohat c) \otimes h &= s^*\kappa \bigl( f \cdot w \ohat bc \bigr) \otimes h \\
			&= T \bigl( (f \cdot w \ohat bc) \otimes h \bigr).
	\end{align*}
	We have seen in previous calculations that
	\[
		\Ind(\pi_A \omax \pi_B)(f \ohat b)((w \ohat c) \otimes h) = (f \cdot w \ohat bc) \otimes h,
	\]
	so we have shown
	\[
		\Ind(\rho_A \otimes \rho_B) \circ (\kappa \rtimes \id)(f \ohat b) \cdot T = T \cdot \Ind(\pi_A \omax \pi_B)(f \ohat b). \qedhere
	\]
\end{proof}

The point of the previous lemma is the following: since the image of $\Ind(\rho_A \otimes \rho_B)$ is a concrete realization of 
$(\A \osig B) \rtimes_{\alpha \otimes \id, r} G$, we have a natural identification of $(\A \osig B) \rtimes_{\alpha \otimes \id, r} G$ with the 
image of $\Ind(\pi_A \omax \pi_B)$. A similar (though easier) result is given below.

\begin{lem}
	With all notation as above, we have a natural identification of the image of $\bigl( \Ind \pi_A \bigr) \omax \tilde{\pi}_B$ with 
	$(\A \rtimes_{\alpha, r} G) \osig B$.
\end{lem}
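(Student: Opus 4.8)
The plan is to recognize $(\Ind \pi_A) \omax \tilde{\pi}_B$, up to unitary equivalence, as an \emph{external} tensor product of two faithful representations, and then to invoke the standard fact that the image of such a product is the spatial tensor product of the images.

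First I would recall from Proposition~\ref{prop:inducedunitary} that the unitary $U$ intertwines $\Ind \pi_A$ with $\Ind \rho_A \otimes 1_{\Hil_B}$, and from Remark~\ref{rem:inducedB} that this same $U$ intertwines $\tilde{\pi}_B$ with $1_{\Zz \otimes \Hil_A} \otimes \rho_B$. Consequently $U$ carries $(\Ind \pi_A) \omax \tilde{\pi}_B$ onto the representation $(\Ind \rho_A \otimes 1_{\Hil_B}) \omax (1_{\Zz \otimes \Hil_A} \otimes \rho_B)$ of $(\A \rtimes_\alpha G) \omax B$, so in particular the two representations have the same image. As was already observed in the proof of Theorem~\ref{thm:nucleargroupoid}, this latter representation equals $(\Ind \rho_A \otimes \rho_B) \circ \kappa$, where $\kappa : (\A \rtimes_\alpha G) \omax B \to (\A \rtimes_\alpha G) \osig B$ is the quotient map onto the spatial tensor product; since $\kappa$ is surjective, the image of $(\Ind \pi_A) \omax \tilde{\pi}_B$ coincides with the image of the external tensor product $\Ind \rho_A \otimes \rho_B$.

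It then remains to compute this image. Because $\rho_A$ is faithful, the image of $\Ind \rho_A$ is by definition a concrete realization of the reduced crossed product $\A \rtimes_{\alpha, r} G$, while $\rho_B$, being faithful, has image an isomorphic copy of $B$. The image of the external tensor product $\Ind \rho_A \otimes \rho_B$ is the closed linear span of the operators $\Ind \rho_A(a) \otimes \rho_B(b)$ inside $B((\Zz \otimes_A \Hil_A) \otimes \Hil_B)$, which is precisely the spatial tensor product of the two concrete image algebras---that is, $(\A \rtimes_{\alpha, r} G) \osig B$. Transporting back along $U$ yields the desired natural identification of the image of $(\Ind \pi_A) \omax \tilde{\pi}_B$ with $(\A \rtimes_{\alpha, r} G) \osig B$; on an elementary tensor $f \otimes b$ with $f \in \Gamma_c(G, r^*\A)$ and $b \in B$, the operator $(\Ind \pi_A)(f)\,\tilde{\pi}_B(b)$ corresponds to $f \otimes b$. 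I expect no serious obstacle here: the only point demanding attention is the passage from the external tensor product of representations to the spatial tensor product of their images, which is essentially the definition of the minimal norm. This is exactly why the present lemma is easier than its counterpart for $(\A \osig B) \rtimes_{\alpha \otimes \id, r} G$, where one must first route through the homomorphism $\kappa \rtimes \id$ and the unitary $T$ before identifying the image.
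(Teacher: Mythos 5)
Your proposal is correct and follows essentially the same route as the paper: both identify $(\Ind \pi_A) \omax \tilde{\pi}_B$ with $\bigl( (\Ind \rho_A) \otimes \rho_B \bigr) \circ \kappa$ via Proposition~\ref{prop:inducedunitary} and Remark~\ref{rem:inducedB}, and then use that the image of the external tensor product of the faithful representations $\Ind \rho_A$ and $\rho_B$ is the spatial tensor product $(\A \rtimes_{\alpha, r} G) \osig B$. Your write-up is in fact slightly more explicit than the paper's about why the quotient map $\kappa$ being surjective lets one pass to the image of the external tensor product, but the substance is identical.
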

\begin{proof}
	The image of $\Ind \pi_A$ can be identified with $\A \rtimes_{\alpha, r} G$, and we know that $\Ind \pi_A$ and $(\Ind \rho_A) 
	\otimes 1_{\Hil_B}$ are equivalent by Proposition \ref{prop:inducedunitary}. We also know from Remark \ref{rem:inducedB} that
	$\tilde{\pi}_B = 1_{\Zz \otimes \Hil_A} \otimes \rho_B$, so if $\kappa' : (\A \rtimes_{\alpha} G) \omax B \to (\A \rtimes_{\alpha} G) \osig B$ 
	denotes the canonical map, we have
	\[
		\bigl( \Ind \pi_A \bigr) \omax \tilde{\pi}_B = \bigl( \Ind \rho_A \bigr) \otimes \rho_B  \circ \kappa'.
	\]
	Since $\rho_B$ is a faithful representation of $B$, we see that the image of $( \Ind \rho_A) \otimes \rho_B$ is naturally identified with 
	$(\A \rtimes_{\alpha, r} G) \osig B$. The result then follows.
\end{proof}

\begin{proof}[Proof of Theorem \ref{thm:reducedexchange}]
	Again by Theorem \ref{thm:exchange} and Lemma \ref{lem:unitaryV} there is a natural isomorphism $\Phi : (\A \omax B) \rtimes_{\alpha 
	\otimes \id} G \to (\A \rtimes_\alpha G) \omax B$ and a unitary $V : \Y \otimes_{A \otimes B} \Hil \to \Zz \otimes_A \Hil$ that together 
	intertwine $\Ind(\pi_A \omax \pi_B)$ and $\bigl( \Ind \pi_A \bigr) \omax \tilde{\pi}_B$. Therefore, the previous two lemmas imply that we 
	can identify $(\A \osig B) \rtimes_{\alpha \otimes \id, r} G$ and $(\A \rtimes_{\alpha, r} G) \osig B$ (which are the images of 
	$\Ind(\pi_A \omax \pi_B)$ and $(\Ind \rho_A) \otimes \rho_B$, respectively). Therefore, we obtain a commutative diagram
	\[
		\xymatrix{ (\A \omax B) \rtimes_{\alpha \otimes \id} G \ar[r]^\Phi \ar[d]_{\Ind(\pi_A \omax \pi_B)} & (\A \rtimes_\alpha G) \omax B 
			\ar[d]^{( \Ind \pi_A ) \omax \tilde{\pi}_B} \\
		(\A \osig B) \rtimes_{\alpha \otimes \id, r} G \ar[r]^\Psi & (\A \rtimes_{\alpha, r} G) \osig B
		}
	\]
	with $\Psi$ denoting the aforementioned identification. It is then clear that $\Psi$ has the desired properties.
\end{proof}

Recall that a locally compact group $G$ is \emph{exact} if whenever $(A, G, \alpha)$ is a dynamical system and $I$ is a 
$G$-invariant ideal in $A$, the sequence
\[
	\xymatrix{
		0 \ar[r] & I \rtimes_{\alpha \vert_I, r} G \ar[r] & A \rtimes_{\alpha, r} G \ar[r] & A/I \rtimes_{\alpha^I, r} G \ar[r] & 0 
	}
\]
of reduced crossed products is exact. We'd like to generalize this notion and study \emph{exact groupoids}.

Throughout this discussion, let $(\A, G, \alpha)$ be a separable groupoid dynamical system. Suppose $I$ is an ideal in $A$. We need to 
develop a criterion for when $I$ is invariant under the action of $G$ on $A$, which will ensure that we can build 
dynamical systems $(\I, G, \alpha \vert_I)$ and $(\A/\I, G, \alpha^I)$. This requires $I$ and $A/I$ 
to be equipped with $C_0(\go)$-algebra structures, which was shown in Propositions \ref{prop:C0Xideal} and 
\ref{prop:C0Xquotient}.

\begin{defn}
	The ideal $I$ is said to be \emph{$G$-invariant} if for all $\gamma \in G$, we have
	\[
		\alpha_\gamma \bigl( \I_{s(\gamma)} \bigr) = \I_{r(\gamma)}.
	\]
\end{defn}

\begin{rem}
	Note that the above definition implies that the restriction $\alpha \vert_I = \{ \alpha_\gamma \vert_{\I_{s(\gamma)}} \}_{\gamma \in G}$ 
	yields an action of $G$ on $\I$, and that the inclusion map $\iota : I \to A$ is $G$-equivariant. Furthermore, for each $\gamma \in G$ 
	we get an isomorphism
	\[
		\alpha_\gamma^I : (\A/\I)_{s(\gamma)} \to (\A/\I)_{r(\gamma)}.
	\]
	Under the natural identification of $(\A/\I)_u$ with $\A_u/\I_u$, this action is just 
	\[
		\alpha_\gamma^I \bigl( a(s(\gamma)) + \I_{s(\gamma)} \bigr) = \alpha_\gamma \bigl( a(s(\gamma)) \bigr) + \I_{r(\gamma)}.
	\]
	In other words, the quotient map $q : A \to A/I$ is $G$-equivariant.
\end{rem}

Since the maps $\iota$ and $q$ are $G$-equivariant, Proposition \ref{prop:Gequivariant} guarantees that they yield maps $\iota \rtimes \id : 
\I \rtimes_{\alpha \vert_I} G \to \A \rtimes_\alpha G$ and $q \rtimes \id : \A \rtimes_\alpha G \to (\A/\I) \rtimes_{\alpha^I} G$. Furthermore, it is
shown in \cite[Lem. 6.3.2]{ananth-renault} that the sequence
\begin{equation}
\label{eq:invariantsequence}
	\xymatrix{
		0 \ar[r] & \I \rtimes_{\alpha \vert_I} G \ar[r]^{\iota \rtimes \id} & \A \rtimes_\alpha G \ar[r]^(0.45){q \rtimes \id} & \A/\I \rtimes_{\alpha^I} 
		G \ar[r] & 0.
	}
\end{equation}
is exact. This fact also follows from \cite[Thm. 3.7]{dana-marius}, which is a more general statement about Fell bundle $C^*$-algebras. Things
are more interesting if we consider the reduced crossed product. Before going any further, we need to show that $\iota$ and $q$ induce
homomorphisms at the level of reduced crossed products.

\begin{lem}
\label{lem:EquivRepReduced}
	Let $(\A, G, \alpha)$ and $(\B, G, \beta)$ be separable groupoid dynamical systems, and let $\varphi : A \to B$ be a $G$-equivariant
	homomorphism. Given $u \in \go$, suppose $\rho$ is a nondegenerate separable representation of $B(u)$ on a Hilbert space $\Hil$,
	$\pi = \rho \circ q$ is the corresponding representation of $B$ on $\Hil$, and let $\Hil_\ess$ denote the essential subspace of the possibly 
	degenerate representation $\pi \circ \varphi$. Then $L^2(G_u, \Hil, \lambda_u) \otimes \Hil_\ess$ is the essential subspace of 
	$L_\pi \circ (\varphi \rtimes \id)$. Moreover,
	\[
		L_{(\pi \circ \varphi)_\ess} = \bigl( L_\pi \circ (\varphi \rtimes \id) \bigr)_\ess.
	\]
\end{lem}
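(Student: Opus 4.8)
The plan is to unwind both representations into explicit integral operators on $L^2(G_u, \Hil, \lambda_u)$ and compare them using the $G$-equivariance of $\varphi$. Since $\pi = \rho \circ q$ is lifted from the single fiber $B(u)$, when we realize it as a $C_0(\go)$-linear representation in the sense of the remark containing \eqref{eq:IntegratedLR}, the associated measure is the point mass $\delta_u$, the Borel Hilbert bundle has fiber $\Hil$ over $u$ and $0$ elsewhere, and $\nu^{-1} = \lambda_u$. Consequently $L_\pi$ acts on $L^2(G_u, \Hil, \lambda_u)$ with fiber representation $\pi_{s(\gamma)} = \rho$ for every $\gamma \in G_u$. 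First I would substitute $(\varphi \rtimes \id)(f)(\eta) = \varphi_{r(\eta)}(f(\eta))$ from Proposition \ref{prop:Gequivariant} into \eqref{eq:IntegratedLR} and apply $G$-equivariance in the form $\beta_\gamma^{-1}(\varphi_{r(\gamma)}(b)) = \varphi_{s(\gamma)}(\alpha_\gamma^{-1}(b))$. Because $s(\gamma) = u$ on $G_u$, this rewrites
\[
	L_\pi\bigl( (\varphi \rtimes \id)(f) \bigr)h(\gamma) = \int_G \sigma\bigl( \alpha_\gamma^{-1}(f(\eta)) \bigr) h(\eta^{-1}\gamma) \, d\lambda^{r(\gamma)}(\eta),
\]
where $\sigma := \rho \circ \varphi_u$ is a (possibly degenerate) representation of $\A_u$ on $\Hil$. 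This has exactly the shape of \eqref{eq:IntegratedLR}, but with the fiber representation $\sigma$ in place of a nondegenerate one.

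Next I would identify $\Hil_\ess$. Writing $q_A : A \to \A_u$ for evaluation at $u$, the $C_0(\go)$-linearity of $\varphi$ gives $\pi \circ \varphi = \rho \circ q \circ \varphi = \sigma \circ q_A$, so $\Hil_\ess = \overline{(\pi \circ \varphi)(A)\Hil} = \overline{\sigma(\A_u)\Hil}$ is precisely the essential subspace of $\sigma$; in particular $\Hil_\ess$ reduces $\sigma$ and $\sigma(\A_u)\Hil \subseteq \Hil_\ess$. From the displayed formula, each vector $L_\pi\bigl((\varphi \rtimes \id)(f)\bigr)h$ then takes values in $\Hil_\ess$, which shows that the essential subspace of $L_\pi \circ (\varphi \rtimes \id)$ is contained in the subspace $L^2(G_u, \Hil_\ess, \lambda_u)$ of $\Hil_\ess$-valued sections (the subspace the statement records as $L^2(G_u, \Hil, \lambda_u) \otimes \Hil_\ess$, identified with $L^2(G_u, \lambda_u) \otimes \Hil_\ess$).

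The subspace $L^2(G_u, \Hil_\ess, \lambda_u)$ is then invariant, and restricting the displayed formula to $\Hil_\ess$-valued sections replaces $\sigma$ by its nondegenerate part $\sigma_\ess$, which is exactly the fiber representation $(\pi \circ \varphi)_\ess$ over $u$. Hence the restriction of $L_\pi \circ (\varphi \rtimes \id)$ to $L^2(G_u, \Hil_\ess, \lambda_u)$ is literally $L_{(\pi \circ \varphi)_\ess}$. To finish I would observe that $L_{(\pi \circ \varphi)_\ess}$ is nondegenerate: $(\pi \circ \varphi)_\ess$ is a nondegenerate representation of $A$, so by the remark containing \eqref{eq:IntegratedLR} its left regular representation is unitarily equivalent to $\Ind (\pi \circ \varphi)_\ess$, which is nondegenerate. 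Therefore $\overline{L_{(\pi \circ \varphi)_\ess}(\Gamma_c(G, r^*\A)) L^2(G_u, \Hil_\ess, \lambda_u)} = L^2(G_u, \Hil_\ess, \lambda_u)$, and since this range lies inside the essential subspace of $L_\pi \circ (\varphi \rtimes \id)$, the reverse containment holds. Combining the two containments shows that $L^2(G_u, \Hil_\ess, \lambda_u)$ is exactly the essential subspace, and the restricted representation there is $L_{(\pi \circ \varphi)_\ess}$, giving the asserted identity.

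The main obstacle is not the algebra—the $G$-equivariance manipulation is a direct mirror of the one in Proposition \ref{prop:Gequivariant}—but the bookkeeping of the direct-integral data. One must argue carefully that a representation lifted from the single fiber $B(u)$ corresponds to the point mass $\delta_u$, so that $L_\pi$ genuinely lives on $L^2(G_u, \Hil, \lambda_u)$ with constant fiber representation $\rho$, and that passing to the essential subspace commutes with forming the left regular representation (that is, $L_{\sigma_\ess}$ really is the restriction of $L_\sigma$). The nondegeneracy of $L_{(\pi \circ \varphi)_\ess}$ that powers the reverse containment is the one point where I would invoke the equivalence $L_\pi \cong \Ind \pi$ from Section 2 rather than compute directly.
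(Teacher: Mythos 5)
Your proposal is correct and follows essentially the same route as the paper: both arguments hinge on substituting the formula for $(\varphi \rtimes \id)(f)$ into \eqref{eq:IntegratedLR} and using $G$-equivariance (with $s(\gamma)=u$ on $G_u$) to identify $L_\pi \circ (\varphi \rtimes \id)$ with $L_{\pi\circ\varphi}$ on the $\Hil_\ess$-valued sections. The only difference is cosmetic: the paper gets the reverse containment by checking that the representation annihilates vectors $g \otimes k$ with $k \in \Hil_\ess^\perp$, whereas you deduce nondegeneracy of the restriction from the unitary equivalence $L_{(\pi\circ\varphi)_\ess} \cong \Ind (\pi\circ\varphi)_\ess$; both suffice.
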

\begin{proof}
	Clearly $L^2(G_u, \lambda_u) \otimes \Hil_\ess$ embeds isometrically into $L^2(G_u, \lambda_u) \otimes \Hil$. For any $h \in 
	L^2(G_u, \lambda_u) \otimes \Hil_\ess$, we have
	\begin{align*}
		L_\pi \circ (\varphi \rtimes \id)(f)h(\gamma) 
			&= \int_G \rho \bigl( \beta_\gamma^{-1} \bigl( \varphi_{r(\eta)}(f(\eta)) \bigr) \bigr) h(\eta^{-1}\gamma)  \, 
				d\lambda^{r(\gamma)}(\eta) \\
			&= \int_G \rho \bigl( \varphi_{s(\gamma)} \bigl(\alpha_\gamma^{-1} (f(\eta)) \bigr) \bigr) h(\eta^{-1}\gamma)  \, 
				d\lambda^{r(\gamma)}(\eta) \\
			&= \int_G \rho \circ \varphi_u \bigl( \alpha_\gamma^{-1}(f(\eta)) \bigr) h(\eta^{-1}\gamma) \, d\lambda^{r(\gamma)}(\eta) \\
			&= \int_G (\pi \circ \varphi)_u \bigl( \alpha_\gamma^{-1}(f(\eta)) \bigr) h(\eta^{-1}\gamma) \, d\lambda^{r(\gamma)}(\eta) \\
			&= L_{\pi \circ \varphi}(f)h(\gamma).
	\end{align*}
	Thus $L^2(G_u, \lambda_u) \otimes \Hil_\ess$ is invariant for $L_\pi \circ (\varphi \rtimes \id)$. Now suppose that $g \otimes k \in 
	(L^2(G_u, \lambda_u) \otimes \Hil_\ess)^\perp$. Then in particular,
	\[
		\ip{g \otimes k}{g \otimes h} = \ip{g}{g} \ip{k}{h} = 0
	\]
	for all $h \in \Hil_\ess$, so $k \in \Hil_\ess^\perp$. Therefore,
	\begin{align*}
		L_\pi \circ (\varphi \rtimes \id)(f)(g \otimes k)(\gamma) &= \int_G \rho \bigl( \beta_\gamma^{-1} \bigl( \varphi \rtimes \id(f)(\eta) \bigr) \bigr)
				g(\eta^{-1} \gamma) k \, d\lambda^{r(\gamma)}(\eta) \\
			&= \int_G g(\eta^{-1}\gamma) (\pi \circ \varphi)_u \bigl( \alpha_\gamma^{-1}(f(\eta)) \bigr) k \, d\lambda^{r(\gamma)}(\eta) \\
			&= 0
	\end{align*}
	since $k$ is in the zero space of $\pi \circ \varphi$. Thus $L^2(G_u, \lambda_u) \otimes \Hil_\ess$ is a nondegenerate invariant subspace for 
	$L_\pi \circ (\varphi \rtimes \id)$, and this representation vanishes on its orthogonal complement. Therefore, $L^2(G_u, \lambda_u) \otimes 
	\Hil_\ess = (L^2(G_u, \lambda_u) \otimes \Hil)_\ess$, and the computations above show that $L_\pi \circ (\varphi \rtimes \id) = L_{\pi \circ 
	\varphi}$ on this subspace.
\end{proof}

\begin{prop}
\label{prop:GequivReducedHoms}
	Let $(\A, G, \alpha)$ and $(\B, G, \beta)$ be separable groupoid dynamical systems, and let $\varphi : A \to B$ be a $G$-equivariant
	homomorphism. Then there is a homomorphism $\varphi \rtimes \id : \A \rtimes_{\alpha, r} G \to \B \rtimes_{\beta, r} G$ taking 
	$\gcra$ into $\Gamma_c(G, r^*\B)$ and satisfying
	\[
		\varphi \rtimes \id(f)(\gamma) = \varphi_{r(\gamma)} \bigl( f(\gamma) \bigr)
	\]
	for $f \in \gcra$.
\end{prop}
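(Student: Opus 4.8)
The map $\varphi \rtimes \id$ is already defined on sections by (\ref{eq:equivariant}), and Proposition \ref{prop:Gequivariant} shows that it is a $*$-homomorphism carrying $\gcra$ into $\Gamma_c(G, r^*\B)$. Thus the only thing to prove is that it is bounded with respect to the reduced norms, so that it extends by continuity from $\gcra$ to a homomorphism $\A \rtimes_{\alpha, r} G \to \B \rtimes_{\beta, r} G$. The plan is to estimate $\norm{\varphi \rtimes \id(f)}_r$ by routing it through the fiberwise left regular representations $L_\pi$ and then feeding the output of Lemma \ref{lem:EquivRepReduced} into the weak-containment description of the reduced norm on $\A \rtimes_\alpha G$.

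First I would fix $f \in \gcra$ and recall, from the discussion of the reduced norm in Section 2 applied to $\B$, that
\[
	\norm{\varphi \rtimes \id(f)}_r = \sup_{u \in \go} \norm{\Ind \pi_u \bigl( \varphi \rtimes \id(f) \bigr)},
\]
where each $\pi_u$ is the lift to $B$ of a faithful representation $\rho_u$ of the fiber $B(u)$. Since $\Ind \pi_u$ is unitarily equivalent to the integrated left regular representation $L_{\pi_u}$, I can replace $\Ind \pi_u$ by $L_{\pi_u}$ throughout. For each $u$ I would then apply Lemma \ref{lem:EquivRepReduced} with $\pi = \pi_u$: the lemma identifies the essential part of $L_{\pi_u} \circ (\varphi \rtimes \id)$ with $L_{(\pi_u \circ \varphi)_\ess}$ and shows that $L_{\pi_u} \circ (\varphi \rtimes \id)$ annihilates the orthogonal complement of its essential subspace. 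Consequently the operator norm is attained on that subspace, giving
\[
	\norm{L_{\pi_u} \bigl( \varphi \rtimes \id(f) \bigr)} = \norm{L_{(\pi_u \circ \varphi)_\ess}(f)}.
\]
Now $(\pi_u \circ \varphi)_\ess$ is a genuine nondegenerate representation of $A$, and $L_{(\pi_u \circ \varphi)_\ess}$ is equivalent to $\Ind (\pi_u \circ \varphi)_\ess$. Appealing again to Section 2, this time to the form of the reduced norm on $\A \rtimes_\alpha G$ as a supremum of $\norm{\Ind \sigma}$ over \emph{all} representations $\sigma$ of $A$, I obtain
\[
	\norm{L_{(\pi_u \circ \varphi)_\ess}(f)} = \norm{\Ind (\pi_u \circ \varphi)_\ess (f)} \leq \norm{f}_r.
\]
Taking the supremum over $u \in \go$ yields $\norm{\varphi \rtimes \id(f)}_r \leq \norm{f}_r$, so $\varphi \rtimes \id$ is reduced-norm decreasing and extends to the desired homomorphism, still given on $\gcra$ by the stated formula.

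The main obstacle, and the reason Lemma \ref{lem:EquivRepReduced} was isolated beforehand, is that $\pi_u \circ \varphi$ is generally \emph{degenerate}: the homomorphism $\varphi$ need be neither nondegenerate nor surjective, so the composite representation of $A$ can have a nontrivial null space. One must therefore pass to essential subspaces before the left regular representations on the two sides can be compared, and verify that $\norm{L_{\pi_u}(\varphi \rtimes \id(f))}$ is genuinely attained on the essential subspace rather than being diluted by the vanishing complement. Once this bookkeeping is dispatched by the lemma, the rest of the argument is simply the two applications of the weak-containment (supremum-over-all-representations) formulation of the reduced norm, one on each side.
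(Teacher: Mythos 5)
Your proposal is correct and follows essentially the same route as the paper: both reduce the problem to a reduced-norm estimate via the supremum over fiberwise representations $\pi_u$ lifted from faithful representations of the fibers $B(u)$, and both invoke Lemma \ref{lem:EquivRepReduced} to identify $(\Ind \pi_u) \circ (\varphi \rtimes \id)$ with the induction of $\pi_u \circ \varphi$ before applying weak containment. If anything, your write-up is slightly more scrupulous than the paper's about why the norm of $L_{\pi_u}(\varphi \rtimes \id(f))$ is attained on the essential subspace of the possibly degenerate $\pi_u \circ \varphi$, a point the paper's proof leaves implicit.
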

\begin{proof}
	We know from Proposition \ref{prop:Gequivariant} that such a map $\varphi \rtimes \id : \gcra \to \Gamma_c(G, r^*\B)$ exists, and that
	it is a $*$-homomorphism. Therefore, we just need to see that it extends to a map between the reduced crossed products. For each 
	$u \in \go$, let $\rho$ be a faithful representation of $B(u)$, and put $\pi_u = \rho_u \circ q_u$. Then each $\pi_u \circ \varphi$ is a 
	representation of $A$, so we clearly have
	\[
		\norm{\Ind(\pi_u \circ \varphi)(f)} \leq \norm{f}_r
	\]
	for all $f \in \gcra$. However, Lemma \ref{lem:EquivRepReduced} tells us that $\Ind(\pi_u \circ \varphi)(f) = (\Ind \pi_u) \circ (\varphi \rtimes 
	\id)(f)$, so
	\[
		\norm{\varphi \rtimes \id(f)}_r = \sup_{u \in \go} \norm{(\Ind \pi_u) \circ (\varphi \rtimes \id)(f)} = \sup_{u \in \go} 
			\norm{\Ind(\pi \circ \varphi)(f)} \leq \norm{f}_r
	\]
	for all $f \in \gcra$. Thus $\varphi \rtimes \id$ is bounded with respect to the reduced norms on $\gcra$ and $\Gamma_c(G, r^*\B)$,
	so it extends to a map $\varphi \rtimes \id : \A \rtimes_{\alpha,r} G \to \B \rtimes_{\beta, r} G$.
\end{proof}

Since $q \rtimes \id$ maps $\gcra$ onto $\Gamma_c(G, r^*\B)$ by \cite[Lem. 6.3.2]{ananth-renault}, $q \rtimes \id$ has dense range and is thus 
surjective. It is also true that $\iota \rtimes \id$ gives an embedding of $\I \rtimes_{\alpha, r} G$ into $\A \rtimes_{\alpha, r} G$ as an ideal. To see 
this, let $\{\rho_u\}$ be a family of faithful representations of the fibers of $A$, and put $\pi_u = \rho_u \circ q_u$ for all $u$. Then each 
$\pi_u \circ \iota$ is a (possibly degenerate) representation of $I$, and Lemma \ref{lem:EquivRepReduced} tells us that
\[
	\norm{\Ind(\pi_u \circ \iota)(f)} = \norm{(\Ind \pi_u) \circ (\iota \rtimes \id)(f)}
\]
for all $f \in \Gamma_c(G, r^*\I)$. However, $\pi_u \circ \iota$ is just the lift of $\rho_u \circ \iota_u$ to $I$, and the latter is a faithful representation
of $I(u)$. Therefore,
\[
	\norm{f}_r = \sup_{u\in \go} \norm{\Ind(\pi_u \circ \iota)(f)} = \sup_{u \in \go} \norm{(\Ind \pi_u) \circ (\iota \rtimes \id)(f)} = \norm{\iota \rtimes 
		\id(f)}_r
\]
for all $f \in \Gamma_c(G, r^*\I)$, so $\iota \rtimes \id$ is isometric. We also have $\image(\iota \rtimes \id) \subset \ker(q \rtimes \id)$, 
so we get a sequence
\begin{equation}
\label{eq:ReducedExactSequence}
		\xymatrix{
		0 \ar[r] & \I \rtimes_{\alpha \vert_I, r} G \ar[r]^{\iota \rtimes \id} & \A \rtimes_{\alpha, r} G \ar[r]^(0.43){q \rtimes \id} & 
			\A/\I \rtimes_{\alpha^I, r} G \ar[r] & 0.
		}	
\end{equation}
However, \eqref{eq:ReducedExactSequence} not exact in general. Gromov has famously produced examples of \emph{groups} for 
which \eqref{eq:ReducedExactSequence} fails to be exact. There are more tractable (but still complicated) examples of groupoids that cause 
\eqref{eq:ReducedExactSequence} to go bad. Given the unfortunate existence of such groupoids, it makes sense to single out the ones for 
which \eqref{eq:ReducedExactSequence} is always exact. 

\begin{defn}
	A second countable locally compact groupoid $G$ is said to be \emph{exact} if whenever $(\A, G, \alpha)$ is a separable 
	groupoid dynamical system and $I$ is a $G$-invariant ideal in $A$, the sequence
	\begin{equation}
	\label{eq:invariantsequence2}
		\xymatrix{
			0 \ar[r] & \I \rtimes_{\alpha \vert_I, r} G \ar[r]^{\iota \rtimes \id} & \A \rtimes_{\alpha, r} G \ar[r]^(0.45){q \rtimes \id} & 
				\A/\I \rtimes_{\alpha^I, r} G \ar[r] & 0.
		}
	\end{equation}
	is short exact.
\end{defn}

Note that we have the following immediate corollary to \cite[Lem. 6.3.2]{ananth-renault} when $G$ is measurewise amenable.

\begin{cor}
	If $G$ is a measurewise amenable secound countable locally compact Hausdorff groupoid, then $G$ is exact.
\end{cor}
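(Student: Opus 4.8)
The plan is to reduce the exactness of the reduced sequence \eqref{eq:invariantsequence2} to the exactness of the corresponding \emph{full} sequence \eqref{eq:invariantsequence}, which holds for any separable system by \cite[Lem. 6.3.2]{ananth-renault}. The bridge between the two is the observation that when $G$ is measurewise amenable, the full and reduced crossed products coincide for every separable dynamical system over $G$. Indeed, if $\pi$ is a faithful representation of the relevant section algebra, then the induced representation $\Ind \pi$ of the \emph{full} crossed product is faithful by Theorem~1 of \cite{aidan-dana}---this is precisely the fact exploited in the proof of Theorem~\ref{thm:nucleargroupoid}. Consequently $\norm{f} = \norm{\Ind \pi(f)} = \norm{f}_r$ for all $f \in \gcra$, so the identity on $\gcra$ extends to an isomorphism $\A \rtimes_\alpha G \cong \A \rtimes_{\alpha, r} G$.

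First I would apply this identification to each of the three systems appearing in the sequence, namely $(\I, G, \alpha \vert_I)$, $(\A, G, \alpha)$, and $(\A/\I, G, \alpha^I)$. All three are separable (the ideal and the quotient of a separable algebra are separable) and all live over the same measurewise amenable groupoid $G$, so each full crossed product is canonically identified with its reduced counterpart. Next I would check that under these identifications the maps $\iota \rtimes \id$ and $q \rtimes \id$ of the reduced sequence are carried to the corresponding maps of the full sequence. This is immediate from Propositions \ref{prop:Gequivariant} and \ref{prop:GequivReducedHoms}, since in both settings the homomorphism induced by a $G$-equivariant map $\varphi$ is characterized by the single formula $\varphi \rtimes \id(f)(\gamma) = \varphi_{r(\gamma)}(f(\gamma))$ on the common dense subalgebra $\gcra$.

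With the reduced sequence \eqref{eq:invariantsequence2} thus identified term by term and map by map with the full sequence \eqref{eq:invariantsequence}, its exactness follows directly from \cite[Lem. 6.3.2]{ananth-renault}. Since the system $(\A, G, \alpha)$ and the $G$-invariant ideal $I$ were arbitrary, $G$ satisfies the definition of an exact groupoid.

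I expect the only real subtlety to lie in the faithfulness of the regular representation for measurewise amenable $G$, which is what collapses the reduced norm onto the universal norm; once the full and reduced norms are known to agree, the remainder is bookkeeping, matching the two sequences against one another using the formulas already established in Propositions \ref{prop:Gequivariant} and \ref{prop:GequivReducedHoms}.
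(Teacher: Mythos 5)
Your proposal is correct and follows essentially the same route as the paper: the paper's own proof likewise observes that measurewise amenability collapses the reduced crossed products onto the full ones, so that the reduced sequence \eqref{eq:invariantsequence2} coincides with the full sequence \eqref{eq:invariantsequence}, whose exactness is \cite[Lem. 6.3.2]{ananth-renault}. You simply make explicit the details (faithfulness of $\Ind\pi$ via \cite{aidan-dana} and the matching of the maps on $\gcra$) that the paper leaves implicit.
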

\begin{proof}
	If $G$ is measurewise amenable and $(\A, G, \alpha)$ is a separable dynamical system, then the sequence \eqref{eq:invariantsequence}
	coincides with \eqref{eq:invariantsequence2}. But this sequence of full crossed products is exact, so it follows that $G$ is exact.
\end{proof}

We now have almost all of the pieces in place to prove the promised exactness theorem. However, note that all the work that we have done so 
far applies only to \emph{separable} $C^*$-algebras. Since we'll need to take spatial tensor products with short exact sequences of 
arbitrary $C^*$-algebras, we seem to be in some trouble. We had a similar problem for the nuclearity theorem, which we were able 
to circumvent via Proposition \ref{prop:sepnuclear}. Fortunately, a similar trick will work here. For brevity, we will say that a $C^*$-algebra 
$A$ is \emph{separably exact} if whenever
	\[
		\xymatrix{
			0 \ar[r] & I \ar[r]^(0.47){\iota} & B \ar[r]^(0.43){q} & B/I \ar[r] & 0
		}
	\]
is a short exact sequence of \emph{separable} $C^*$-algebras, the sequence
	\[
		\xymatrix{
			0 \ar[r] & I \ar[r]^(0.45){\iota \otimes \id} & B \ar[r]^(0.45){q \otimes \id} & B/I \ar[r] & 0
		}
	\]
is exact. Our aim is to show that $A$ is exact if and only if it is separably exact. For the proof we will need an alternative characterization of exactness, 
which was observed by Kirchberg in \cite[Thm. 1.1]{kirchberg83}: a $C^*$-algebra $A$ is exact if and only if 
\begin{equation}
\label{eq:calkin}
	\xymatrix{
		0 \ar[r] & \K(\Hil) \osig A \ar[r]^{\iota \otimes \id} & B(\Hil) \osig A \ar[r]^(0.42){q \otimes \id} & B(\Hil)/\K(\Hil) \osig A \ar[r] & 0 \\
	}
\end{equation}
is exact, where $\Hil$ is a separable infinite-dimensional Hilbert space.

\begin{prop}
\label{prop:sepexact}
	If a $C^*$-algebra $A$ is separably exact, then it is exact.
\end{prop}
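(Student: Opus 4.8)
The plan is to invoke Kirchberg's characterization recorded in \eqref{eq:calkin}: to prove that $A$ is exact it suffices to show that
\[
	0 \to \K(\Hil) \osig A \to B(\Hil) \osig A \to B(\Hil)/\K(\Hil) \osig A \to 0
\]
is exact for a separable infinite-dimensional $\Hil$. Because the minimal tensor product is injective and carries surjections to surjections, the map $\iota \otimes \id$ is automatically injective, $q \otimes \id$ is automatically surjective, and $\image(\iota \otimes \id) \subseteq \ker(q \otimes \id)$ always holds. Thus the only thing that can fail is middle exactness, so the task reduces to establishing the single inclusion $\ker(q \otimes \id) \subseteq \image(\iota \otimes \id)$.

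First I would argue by contradiction, mirroring the reduction in Proposition~\ref{prop:sepnuclear}. Suppose the inclusion fails, and choose $x \in \ker(q \otimes \id)$ with $x \notin \K(\Hil) \osig A = \image(\iota \otimes \id)$. Writing $x$ as a norm-limit of elements $x_n = \sum_j T_{n,j} \otimes a_{n,j} \in B(\Hil) \odot A$, the collection $\{T_{n,j}\}$ is countable. Since $\Hil$ is separable, $\K(\Hil)$ is itself separable, so the $C^*$-subalgebra $B_0 \subseteq B(\Hil)$ generated by $\K(\Hil) \cup \{T_{n,j}\}$ is separable and still contains $\K(\Hil)$ as an ideal. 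This yields a short exact sequence $0 \to \K(\Hil) \to B_0 \to B_0/\K(\Hil) \to 0$ of \emph{separable} $C^*$-algebras, to which the hypothesis of separable exactness applies.

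Next I would transport the assumption on $x$ down to $B_0$. Injectivity of the minimal norm makes the inclusion $B_0 \osig A \hookrightarrow B(\Hil) \osig A$ isometric, so $x$ lies in $B_0 \osig A$; moreover the square relating the restricted quotient $q_0 : B_0 \to B_0/\K(\Hil)$ to $q$ commutes, and the induced map $(B_0/\K(\Hil)) \osig A \hookrightarrow (B(\Hil)/\K(\Hil)) \osig A$ is again injective. Hence $(q_0 \otimes \id)(x) = 0$. Separable exactness of $A$ now gives $\ker(q_0 \otimes \id) = \K(\Hil) \osig A$, forcing $x \in \K(\Hil) \osig A$ and contradicting the choice of $x$. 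Therefore \eqref{eq:calkin} is exact, and $A$ is exact.

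The main obstacle I anticipate is not any single estimate but the bookkeeping needed to build $B_0$ so that it simultaneously captures $x$ and retains $\K(\Hil)$ as an ideal with separable quotient, together with the careful verification that the two injective maps arising from the injectivity of $\osig$ make the relevant diagram commute, so that $(q_0 \otimes \id)(x) = 0$ genuinely follows from $(q \otimes \id)(x) = 0$. Everything else is a direct application of the standard injectivity and surjectivity properties of the minimal tensor product together with the separable-reduction philosophy already used for Proposition~\ref{prop:sepnuclear}.
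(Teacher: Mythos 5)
Your proposal is correct and follows essentially the same route as the paper: both invoke Kirchberg's Calkin-algebra characterization \eqref{eq:calkin}, pass to the separable $C^*$-subalgebra of $B(\Hil)$ generated by $\K(\Hil)$ and the operators appearing in an approximating sequence for $x$, and use the injectivity of $\osig$ to transport the failure of middle exactness to that separable sequence. The only difference is that you argue by contradiction where the paper argues the contrapositive, which is immaterial.
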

\begin{proof}
	Suppose $A$ is an inexact $C^*$-algebra. Then the sequence \eqref{eq:calkin} is not exact. Therefore, there is an 
	$x \in \ker(q \otimes \id)$ that does not belong to $\K(\Hil) \osig A$. We can approximate $x$ by a sequence $\{t_i\}$, where each
	\[
		t_i = \sum T_{ij} \otimes a_{ij}
	\]
	is a sum of elementary tensors. Let $B$ be the separable $C^*$-subalgebra of $B(\Hil)$ generated by the $T_{ij}$ and $\K(\Hil)$. Since 
	$B \osig A$ sits naturally inside  $B(\Hil) \osig A$ as a $C^*$-subalgebra, we have $x \in B \osig A$. Then we obtain a sequence
	\[
		\xymatrix{
			0 \ar[r] & \K(\Hil) \osig A \ar[r]^(0.54){\iota \otimes \id} & B \osig A \ar[r]^(0.39){q \otimes \id} & B/\K(\Hil) \osig A \ar[r] & 0
		}
	\]
	which is not exact by construction, since $q \otimes \id$ is simply the restriction of the original quotient map to $B \osig A$. Therefore, 
	$x \in B \osig A$ implies that $\ker(q \otimes \id) \neq \K(\Hil) \osig A$, and $A$ cannot be separably exact.
\end{proof}

\begin{thm}
	Let $(\A, G, \alpha)$ be a separable groupoid dynamical system with $A$ exact and $G$ exact. Then the reduced crossed 
	product $\A \rtimes_{\alpha,r} G$ is exact.
\end{thm}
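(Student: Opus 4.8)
The plan is to mirror Kirchberg's argument in the group case, letting exactness of $A$ control the coefficient algebra and exactness of $G$ control the crossed product, with the two linked by Theorem~\ref{thm:reducedexchange}. First I would reduce to separable coefficients: by Proposition~\ref{prop:sepexact} it suffices to show that $\A \rtimes_{\alpha,r} G$ is separably exact, so I fix a short exact sequence $0 \to I \to B \to B/I \to 0$ of \emph{separable} $C^*$-algebras and aim to prove that
\[
	0 \to (\A \rtimes_{\alpha,r} G) \osig I \to (\A \rtimes_{\alpha,r} G) \osig B \to (\A \rtimes_{\alpha,r} G) \osig (B/I) \to 0
\]
is exact.

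Next I would apply Theorem~\ref{thm:reducedexchange} with coefficient algebras $I$, $B$, and $B/I$ in turn, identifying each term with a reduced crossed product of a tensor-product system, e.g. $(\A \rtimes_{\alpha,r} G) \osig B \cong (\A \osig B) \rtimes_{\alpha \otimes \id, r} G$. Because the isomorphisms $\Psi$ are natural (sending $f \ohat b \mapsto f \otimes b$ on elementary tensors), a one-line check shows they intertwine the tensored maps $\id \otimes \iota$ and $\id \otimes q$ with the induced maps $\iota \rtimes \id$ and $q \rtimes \id$. Hence exactness of the displayed sequence is equivalent to exactness of
\[
	0 \to (\A \osig I) \rtimes_{\alpha \otimes \id, r} G \to (\A \osig B) \rtimes_{\alpha \otimes \id, r} G \to (\A \osig (B/I)) \rtimes_{\alpha \otimes \id, r} G \to 0.
\]

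To produce this from the definition of an exact groupoid, I must exhibit $A \osig I$ as a $G$-invariant ideal of $A \osig B$ whose quotient system is $(\A \osig (B/I), G, \alpha \otimes \id)$, and this is where exactness of $A$ is used. Since $A$ is exact, the second part of Theorem~\ref{thm:tensoraction} makes $(\A \osig B, G, \alpha \otimes \id)$ a genuine separable dynamical system and Proposition~\ref{prop:minimalfiber} gives $(\A \osig B)_u \cong \A_u \osig B$. Exactness of $A$ also makes each fiber $\A_u = A/J_u$ exact, being a quotient of an exact algebra, so tensoring $0 \to I \to B \to B/I \to 0$ by $A$ and by each $\A_u$ keeps it exact; comparing fibers identifies $A \osig I$ as an ideal of $A \osig B$ with $(\A \osig I)_u = \A_u \osig I$ and with quotient fibers $(\A_u \osig B)/(\A_u \osig I) \cong \A_u \osig (B/I)$. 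Since $\alpha_\gamma \otimes \id$ is an isomorphism carrying $\A_{s(\gamma)} \odot I$ onto $\A_{r(\gamma)} \odot I$, it maps $(\A \osig I)_{s(\gamma)}$ onto $(\A \osig I)_{r(\gamma)}$, so the ideal is $G$-invariant; and by Proposition~\ref{prop:C0Xquotient} the induced action on the quotient is fiberwise $\alpha_\gamma \otimes \id$, identifying the quotient system with $(\A \osig (B/I), G, \alpha \otimes \id)$.

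With this in place I would apply the definition of exactness of $G$ to the separable system $(\A \osig B, G, \alpha \otimes \id)$ and its $G$-invariant ideal $A \osig I$, obtaining exactness of the second displayed sequence (including injectivity of $\iota \rtimes \id$). Transporting back through the natural isomorphisms $\Psi$ then yields the first displayed sequence as exact, proving $\A \rtimes_{\alpha,r} G$ separably exact, whence exact by Proposition~\ref{prop:sepexact}. I expect the main obstacle to be the third paragraph: cleanly verifying that $A \osig I$ is a $G$-invariant ideal with the claimed quotient, since this is precisely where exactness of $A$ must be deployed—both to keep the fiberwise tensor sequences exact and to ensure the three tensor-product dynamical systems exist at all. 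The surrounding diagram chase and the intertwining of $\Psi$ with the ideal and quotient maps are routine once the fibers are pinned down.
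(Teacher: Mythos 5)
Your proposal is correct and follows essentially the same route as the paper: reduce to separable coefficients via Proposition~\ref{prop:sepexact}, transport the tensored sequence through the natural isomorphisms of Theorem~\ref{thm:reducedexchange}, use exactness of $A$ to get an exact sequence of tensor-product dynamical systems with $A \osig I$ a $G$-invariant ideal, and then invoke exactness of $G$. Your third paragraph in fact supplies fiberwise details that the paper dismisses with ``it is easy to see,'' so nothing is missing.
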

\begin{proof}
	By Proposition \ref{prop:sepexact} it is enough to consider separable $C^*$-algebras. Let
	\[
		\xymatrix{
			0 \ar[r] & I \ar[r] &  B \ar[r] &  B/I \ar[r] & 0
		}
	\]
	be a short exact sequence of separable $C^*$-algebras. By Theorem \ref{thm:reducedexchange}, we have an isomorphism 
	$(\A \rtimes_{\alpha, r} G) \osig B \cong (\A \osig B) \rtimes_{\alpha \otimes \id, r} G$. It is easy to see that $A \osig I$ is 
	$\alpha \otimes \id$-invariant, so Theorem \ref{thm:reducedexchange} also yields isomorphisms
	\[
		(\A \rtimes_{\alpha, r} G) \osig I \cong (\A \osig I) \rtimes_{\alpha \otimes \id, r} G
	\]
	and
	\[
		(\A \rtimes_{\alpha, r} G) \osig B/I \cong (\A \osig B/I) \rtimes_{\alpha \otimes \id, r} G.
	\]
	It is straightforward to check that the diagram
	\[
		\xymatrix{
			0 \ar[r] & (\A \rtimes_{r} G) \osig I \ar[r] \ar[d] & (\A \rtimes_{r} G) \osig B \ar[r] \ar[d] & (\A \rtimes_{r} G) \osig B/I \ar[r] \ar[d] 
			& 0 \\
			0 \ar[r] & (\A \osig I) \rtimes_{r} G \ar[r] & (\A \osig B) \rtimes_{r} G \ar[r] & (\A \osig B/I) \rtimes_{r} G \ar[r] & 0
		}
	\]
	commutes, so we have an isomorphism of short exact sequences. It suffices to consider the exactness of the second one. Since $A$ 
	is exact, the sequence
	\[
		\xymatrix{
			0 \ar[r] & A \osig I \ar[r] & A \osig B \ar[r] & A \osig B/I \ar[r] & 0
		}
	\]
	is exact. Since $G$ is assumed to be exact, this sequence remains exact after taking the reduced crossed product by 
	$G$. Consequently, $\A \rtimes_{\alpha, r} G$ is exact.
\end{proof}

\begin{cor}
	Let $(\A, G, \alpha)$ be a separable groupoid dynamical system with $\A$ exact and $G$ measurewise amenable. Then $\A \rtimes_\alpha G$ is exact.
\end{cor}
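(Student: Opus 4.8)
The plan is to obtain this corollary directly from the preceding exactness theorem, using the fact that measurewise amenability forces the full and reduced crossed products to coincide. The argument proceeds in three short steps, each of which simply assembles a result already established above.

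First, I would invoke the corollary asserting that every measurewise amenable second countable locally compact Hausdorff groupoid is exact; thus the standing hypothesis on $G$ already guarantees that $G$ is exact in the sense of the preceding definition. Second, with $A = \Gamma_0(\go, \A)$ exact (which is the meaning of ``$\A$ exact'' here) and $G$ exact, the main exactness theorem immediately yields that the \emph{reduced} crossed product $\A \rtimes_{\alpha, r} G$ is exact. Third, I would appeal to the identification
\[
	\A \rtimes_\alpha G = \A \rtimes_{\alpha, r} G,
\]
which holds precisely because $G$ is measurewise amenable. This is the same phenomenon exploited in the proof of Theorem \ref{thm:nucleargroupoid}: by Theorem 1 of \cite{aidan-dana}, measurewise amenability makes the induced regular representation faithful on the full crossed product, so the reduced norm agrees with the universal norm on $\Gamma_c(G, r^*\A)$. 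Equivalently, as already observed in the proof that measurewise amenable groupoids are exact, the sequence \eqref{eq:invariantsequence} of full crossed products coincides with the sequence \eqref{eq:ReducedExactSequence} of reduced crossed products. Combining the three steps, $\A \rtimes_\alpha G$ is identified with the exact algebra $\A \rtimes_{\alpha, r} G$, and is therefore itself exact.

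There is no genuine obstacle here beyond collating the earlier results; the only point requiring care is the coincidence of the full and reduced crossed products, and this is exactly where the stronger hypothesis of measurewise amenability of $G$ (rather than mere exactness) is used. Everything else is a formal consequence of the exactness theorem together with the corollary that measurewise amenable groupoids are exact.
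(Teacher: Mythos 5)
Your proposal is correct and follows essentially the same route as the paper: apply the earlier corollary that measurewise amenable groupoids are exact, invoke the exactness theorem for the reduced crossed product, and use measurewise amenability to identify $\A \rtimes_\alpha G$ with $\A \rtimes_{\alpha, r} G$. The paper's own proof is just a one-line compression of exactly this argument.
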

\begin{proof}
	Since $G$ is amenable, $\A \rtimes_\alpha G = \A \rtimes_{\alpha, r} G$, and the latter is exact by the previous theorem.
\end{proof}

\subsection*{Acknowledgements} This work was completed as part of the author's doctoral dissertation. I would like to thank Dana Williams for 
his encouragement, and for helpful comments on the preliminary versions of this paper.

\bibliographystyle{amsplain}
\bibliography{}
\end{document}